\DeclareSymbolFont{fouriersymbols}{FMS}{futm}{m}{n}
\DeclareSymbolFont{fourierlargesymbols}{FMX}{futm}{m}{n}
\DeclareMathDelimiter{\VERT}{\mathord}{fouriersymbols}{152}{fourierlargesymbols}{147}
\newtheorem{theorem}{Theorem}[section] %%para enumerar teoremas
\newtheorem{lemma}[theorem]{Lemma}%%para enumerar lemas
\theoremstyle{remark}
\newtheorem{remark}[theorem]{Remark} %para enumerar observaciones
\numberwithin{equation}{section}
\newcommand{\T}{\mathscr{T}}
\newcommand{\Sides}{\mathscr{S}}
\newcommand{\E}{\mathscr{E}}
\begin{document}

% \title[{An AFEM for an elliptic optimal control problem in measure space}]{An adaptive finite element method for an elliptic optimal control problem in measure space}\thanks{EO is partially supported by CONICYT through FONDECYT project 3160201. AJS is partially supported by NSF grant DMS-1720213.}

\title[Error estimates for an optimal control problem in measure space]{An a posteriori error analysis of an elliptic optimal control problem in measure space}
\thanks{EO is partially supported by CONICYT through FONDECYT project 3160201. AJS is partially supported by NSF grant DMS-1720213.}

%General info

%\subjclass[]{}

%\date{}

\keywords{sparse controls, a posteriori error analysis, adaptive finite elements}

% %    Information for first author
% \author{Francisco Fuica}
% \address{\textdagger Departamento de Matemática, Universidad Técnica Federico Santa María, Valparaíso, Chile.}
% \email{francisco.fuica@sansano.usm.cl}

\author{Francisco Fuica\textsuperscript{\textdagger}}
\address{\textdagger Departamento de Matem\'atica, Universidad T\'ecnica Federico Santa Mar\'ia, Valpara\'iso, Chile.
\texttt{francisco.fuica@sansano.usm.cl}}
\author{Enrique Ot\'arola\textsuperscript{\textdaggerdbl}}\address{\textdaggerdbl Departamento de Matem\'atica, Universidad T\'ecnica Federico Santa Mar\'ia, Valpara\'iso, Chile.
\texttt{enrique.otarola@usm.cl}}
\author{Abner J.~Salgado\textsuperscript{\textsection}}
\address{\textsection Department of Mathematics, University of Tennessee, Knoxville, TN 37996, USA. \texttt{asalgad1@utk.edu}
}

\begin{abstract}
We propose an
% and analyze a reliable and efficient 
a posteriori error estimator for a sparse optimal control problem: the control variable lies in the space of regular Borel measures. We consider a solution technique that relies on the discretization of the control variable as a linear combination of Dirac measures. The proposed a posteriori error estimator can be decomposed into the sum of two contributions: an error estimator in the maximum norm for the discretization of the adjoint equation and an estimator in the $L^2$--norm that accounts for the approximation of the state equation. We prove that the designed error estimator is locally efficient and we explore its reliability properties. The analysis is valid for two and three--dimensional domains. We illustrate the theory with numerical examples.
\end{abstract}
\maketitle

\section{Introduction}\label{introduction}
This work is dedicated to the design and analysis of an
% \AJS{a} reliable and 
efficient a posteriori error estimator for a sparse elliptic optimal control problem: the control variable is sought in the space of regular Borel measures. To make matters precise, for $d\in \{2,3\}$, we let $\Omega\subset\mathbb{R}^{d}$ be an open, bounded and convex polytopal domain; the boundary of $\Omega$ is denoted by $\partial \Omega$. Given a desired state $y_d\in L^2(\Omega)$, and a sparsity parameter $\alpha > 0$, we introduce the cost functional
\begin{equation}
J(y,u)
:=
\frac{1}{2}\|y-y_d\|_{L^2(\Omega)}^2 
+
\alpha \|u\|_{\mathcal{M}(\Omega)},
\end{equation}
and thus define our sparse optimal control problem as follows: Find
\begin{equation}\label{eq:continuous_opc}
\min 
J(y,u)
\end{equation}
subject to the linear and elliptic PDE
\begin{equation}\label{eq:elliptic_pde}
-\Delta y = u \text{ in } \Omega, 
\qquad
y = 0 \text{ on  } \partial \Omega.
\end{equation}

Notice that the control variable $u$ lies in the space of regular Borel measures $\mathcal{M}(\Omega)$.

%%%%%%%%%%%%%%%%%%%%%%%%%%%%%%%%%%%%%%%%%%%%%%%%%%%%%%%%%%%%%%%%%%%%%%%
The design and analysis of solution techniques for optimal control problems that induce a sparse structure in the control variable have been widely studied in the literature over the last decade. The first work that provides an analysis for this class of problems is \citep{Stadler2009}; the sparsity arises from the consideration of a $L^1(\Omega)$--control cost term in the quadratic cost functional. The author of \cite{Stadler2009} studied a regularized problem, derived optimality conditions, proposed and analyzed a semismooth Newton method. Solution techniques based on finite element methods have been proposed and analyzed in \citep{WW:11} when the state equation is linear and in \citep{CHW2012,CHW:12,CHW:12b} when the state equation is a semilinear elliptic PDE. For an up--to--date overview of the theory we refer the interested reader to \cite{Casas2017}. The extension of the theory to the evolutionary case has been recently explored in \citep{MR3780467,MR3601024,MR3612174,MR3669663}.
% 
% \FF{For a extension to evolution problems in this context, we also mention \citep{MR3601024,MR3612174}.}

Regarding the optimal control problem \eqref{eq:continuous_opc}--\eqref{eq:elliptic_pde}, and to the best of our knowledge, the first work that provided an analysis was \citep{CK2011}. In this work, the authors analyzed elliptic control problems with measures and functions of bounded variation as controls; existence and uniqueness of the corresponding predual problems were discussed together with the solution of the optimality systems by a semismooth Newton method. Subsequently, 
% The theory was extended 
in \citep{CK2012}, the authors address the feasibility of optimal source placement by optimal control in measure spaces: they  extend \citep{CK2011} by including partial observation, control on subdomains and non--negativity properties of the controls. A numerical scheme based on finite element techniques was later proposed in \citep{CCK2012}. 
%The two previously cited articles did not discretize the control space.
%As opposed to \citep{CK2011} and \citep{CK2012}, in \citep{CCK2012},
The space $\mathcal{M}(\Omega)$ was discretized as the set of linear combinations of Dirac masses at interior mesh points.
The authors proved convergence of the scheme and provided error estimates. On the basis of this discretization scheme, improved error estimates were obtained in \citep{PV2013}. However, these error estimates, are not optimal in terms of approximation. This is due to the fact that the state variable exhibits reduced regularity properties.
It is thus just natural to propose adaptive finite element methods (AFEMs) to efficiently resolve the optimal control problem \eqref{eq:continuous_opc}--\eqref{eq:elliptic_pde} and recover optimal rates of convergence for the state variable.

AFEMs are a fundamental numerical instrument in science and engineering that allows for the resolution of PDEs with relatively modest computational resources. They are known to outperform classical FEM in practice and deliver optimal convergence rates when the latter cannot. To extract the local errors incurred by FEM, and thus be able to equidistribute them, AFEMs rely on \emph{a posteriori error estimators}, which are computable quantities, that depend on the discrete solution and data.
% % , and allows one to measure the error, control it, and effectively minimize it.
In contrast to the well established theory for linear elliptic PDEs, the design and analysis of a posteriori error estimators for optimal control problems are being currently developed. In view of their inherent nonlinear feature the analysis involves more arguments and technicalities.  To the best of our knowledge, the only work that provides an advance concerning the a posteriori error analysis for \eqref{eq:continuous_opc}--\eqref{eq:elliptic_pde} is \citep{CKW2016}. In this reference, the authors propose a functional error estimator, and prove that its square root yields an upper bound for the approximation error of the state variable and the error between the discrete and continuous cost functionals \citep[Section 5]{CKW2016}; an efficiency analysis, however, is not provided. 

In light of the discussion given above, the main objective of this work is to propose and analyze 
% a reliable and 
an efficient a posteriori error estimator for the optimal control problem \eqref{eq:continuous_opc}--\eqref{eq:elliptic_pde}.
% an optimal control problem were the control variable lies in the space of regular Borel measures. 
We consider a solution technique for \eqref{eq:continuous_opc}--\eqref{eq:elliptic_pde} that relies on the discretization of the state and adjoint variables with piecewise linear functions, whereas the control variable is discretized using the framework presented in \citep[Section 3]{CCK2012}. The proposed a posteriori error estimator only accounts for the discretization of the state and adjoint variables. We measure the error of the state variable in $L^2(\Omega)$--norm and the error of the adjoint variable in $L^\infty(\Omega)$--norm, and we derive 
% global reliability and 
local efficiency results. We also explore the reliability properties of the designed error estimator. On the basis of the constructed a posteriori error estimator, we also design simple adaptive strategies that yield optimal rates of convergence for the numerical examples that we perform. We would like to mention that our error estimator is simpler than the one considered in \cite[Section 5]{CKW2016}. In addition, and in contrast to \cite[Section 5]{CKW2016}, the error indicator that we consider for the adjoint variable in the $L^{\infty}(\Omega)$--norm allows for unbounded forcing terms. This is of importance since, as it can be be observed from \eqref{eq:adjoint_pde}, the adjoint equation has $\bar y - y_d$ as a forcing term and, in general, $\bar y - y_d \notin L^{\infty}(\Omega)$. Additional assumptions must be imposed on $y_d$ in order to have that $\bar y - y_d \in L^{\infty}(\Omega)$ \cite[Theorem 2.5]{PV2013}.

The outline of this manuscript is as follows. In Section \ref{opc_in_measure} we present existence and uniqueness results together with first--order optimality conditions. In Section \ref{FEM} we present the finite element discretization of our optimal control problem; it relies on the discretization of the state and adjoint equations by using piecewise linear functions, whereas the control variable is approximated with Dirac deltas. The a posteriori error analysis of elliptic problems with delta sources together with maximum--norm a posteriori error estimation of elliptic problems are reviewed in Section \ref{laplacian_afem}. The core of our work is Section \ref{global_estimator}, where we design an a posteriori error estimator and study reliability and efficiency results. We conclude, in Section \ref{ne}, with a series of numerical examples that illustrate our theory.

Throughout this work $d \in \{ 2,3\}$. If $\mathcal{X}$ and $\mathcal{Y}$ are normed vector spaces, we write $\mathcal{X}  \hookrightarrow \mathcal{Y}$ to denote that $\mathcal{X}$ is continously embedded in $\mathcal{Y}$. We denote by $\mathcal{X}'$ and $\|\cdot\|_{\mathcal{X}}$ the dual and the norm of $\mathcal{X}$, respectively. If $X$ is a function space over the domain $G \subset \mathbb{R}^d$, we denote by $\langle \cdot, \cdot \rangle_{G}$ the duality pairing between $X$ and $X'$. The relation $a \lesssim b$ indicates that $a \leq C b$, with a nonessential constant $C$ that might change at each occurrence.
%%%%%%%%%%%%%%%%%%%%%%%%%%%%%%%%%%%%%%%%%%%%%%%%%%%%%%%%%%%%%%%%%%%%%%%
%%%%%%%%%%%%%%%%%%%%%%%%%%%%%%%%%%%%%%%%%%%%%%%%%%%%%%%%%%%%%%%%%%%%%%%
%%%%%%%%%%%%%%%%%%%%%%%%%%%%%%%%%%%%%%%%%%%%%%%%%%%%%%%%%%%%%%%%%%%%%%%
%%%%%%%%%%%%%%%%%%%%%%%%%%%%%%%%%%%%%%%%%%%%%%%%%%%%%%%%%%%%%%%%%%%%%%%
%%%%%%%%%%%%%%%%%%%%%%%%%%%%%%%%%%%%%%%%%%%%%%%%%%%%%%%%%%%%%%%%%%%%%%%

\section{The optimal control problem in measure space}\label{opc_in_measure}

In this section we review some of the main results related to the existence and uniqueness of solutions for problem \eqref{eq:continuous_opc}--\eqref{eq:elliptic_pde}. In addition, we present first--order necessary and sufficient optimality conditions.

We recall that the space of regular Borel measures $\mathcal{M}(\Omega)$ can be identified, by the Riesz Theorem, with the dual of the space of continuous functions that vanish on the boundary $\partial \Omega$, which from now on we shall denote by $C_0(\Omega)$. Given a measure $\mu \in \mathcal{M}(\Omega)$, we have that \begin{equation}\label{measure_norm}
\|\mu\|_{\mathcal{M}(\Omega)}
=
\sup_{{{\varphi\in C_0(\Omega)}\atop{\|\varphi\|_{C_0(\Omega)}\leq 1}}}\langle \mu, \varphi \rangle_\Omega
=
\sup_{{\varphi\in C_0(\Omega)}\atop{\|\varphi\|_{C_0(\Omega)}\leq 1}} \int_\Omega \varphi \:\text{d}\mu.
\end{equation}

Given $u\in \mathcal{M}(\Omega)$, we define the weak solution of problem \eqref{eq:elliptic_pde} as follows: 
\begin{equation}\label{eq:weak_pde}
y \in W_0^{1,r}(\Omega): \quad (\nabla y, \nabla v)_{L^2(\Omega)}
=
\langle u, v \rangle_\Omega \qquad \forall v \in W_0^{1,r'}(\Omega),
\end{equation}  
where $1\leq r < d/(d-1)$ and $r'$ denotes the conjugate exponent of $r$. Problem \eqref{eq:weak_pde} has a unique solution $y \in W_0^{1,r}(\Omega)$ that satisfies \citep[Theorem 4]{casas1986}, \cite[Th\'eor\`eme 9.1]{MR0192177}
\[
 \|  \nabla y \|_{L^{r}(\Omega)} \lesssim \| u \|_{\mathcal{M}(\Omega)}.
\]

We notice that, in view of the fact that $W_0^{1,r}(\Omega) \hookrightarrow L^2(\Omega)$ for $2d/(d+2) \leq r < d/(d-1)$, the cost functional $J$, which is defined by \eqref{eq:continuous_opc}, is well--defined. 

The following existence result follows from \citep[Proposition 2.2]{CK2011}.

\begin{theorem}[existence and uniqueness]
\label{well_pd_opc}
The sparse optimal control problem \eqref{eq:continuous_opc}--\eqref{eq:elliptic_pde} has a unique solution $(\bar{y},\bar{u})\in W_0^{1,r}(\Omega) \times \mathcal{M}(\Omega)$ for $1\leq r < d/(d-1)$.
\end{theorem}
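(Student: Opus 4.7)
The plan is to use the direct method of the calculus of variations based on the reduced formulation, followed by a short uniqueness argument exploiting the linearity of the state equation.

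First, I would introduce the control-to-state operator $S: \mathcal{M}(\Omega) \to W_0^{1,r}(\Omega)$, which assigns to each $u$ the unique solution $y = S(u)$ of \eqref{eq:weak_pde}; linearity and continuity of $S$ follow from the stability estimate $\|\nabla S(u)\|_{L^{r}(\Omega)} \lesssim \|u\|_{\mathcal{M}(\Omega)}$ quoted from \citep{casas1986,MR0192177}. The reduced functional $j(u) := J(S(u),u)$ is then well-defined and nonnegative, so $m:=\inf j \in [0,\infty)$. Choose a minimizing sequence $\{u_n\}\subset\mathcal{M}(\Omega)$. The sparsity term immediately yields $\alpha\|u_n\|_{\mathcal{M}(\Omega)} \le j(u_n) \le j(0)+1$ for $n$ large, so $\{u_n\}$ is bounded in $\mathcal{M}(\Omega)$, and by \eqref{eq:weak_pde} together with the cited stability estimate, $\{y_n := S(u_n)\}$ is bounded in $W_0^{1,r}(\Omega)$.

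Next I would extract limits. Since $\mathcal{M}(\Omega) = C_0(\Omega)'$, the Banach--Alaoglu theorem produces a subsequence (not relabeled) with $u_n \rightharpoonup^{*} \bar{u}$ in $\mathcal{M}(\Omega)$. Reflexivity of $W_0^{1,r}(\Omega)$ (for $r>1$) yields, after a further extraction, $y_n \rightharpoonup \bar{y}$ in $W_0^{1,r}(\Omega)$. To identify $\bar{y}=S(\bar{u})$ I would pass to the limit in \eqref{eq:weak_pde}: the test space $W_0^{1,r'}(\Omega)$ embeds continuously into $C_0(\Omega)$ because $r' > d$, so $\langle u_n,v\rangle_\Omega \to \langle \bar{u},v\rangle_\Omega$ for every admissible test function $v$, while the weak convergence of $\nabla y_n$ in $L^r(\Omega)$ handles the left-hand side. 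Choosing $r$ in the range $2d/(d+2) < r < d/(d-1)$ gives a compact embedding $W_0^{1,r}(\Omega) \hookrightarrow\hookrightarrow L^2(\Omega)$, hence $y_n \to \bar{y}$ strongly in $L^2(\Omega)$.

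The optimality of $(\bar{y},\bar{u})$ now follows from lower semicontinuity: $\|\bar{y}-y_d\|_{L^2(\Omega)}^2 = \lim \|y_n - y_d\|_{L^2(\Omega)}^2$ by the strong $L^2$ convergence, while weak-$*$ lower semicontinuity of the total variation norm gives $\|\bar{u}\|_{\mathcal{M}(\Omega)} \le \liminf \|u_n\|_{\mathcal{M}(\Omega)}$; adding these inequalities, $j(\bar{u}) \le \liminf j(u_n) = m$, so $\bar{u}$ is a minimizer. For uniqueness, suppose $(\bar{y}_1,\bar{u}_1)$ and $(\bar{y}_2,\bar{u}_2)$ are both optimal. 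Convexity of $J$ in $(y,u)$ together with linearity of the solution map shows that the midpoint is admissible and attains the infimum; strict convexity of $\tfrac{1}{2}\|\cdot-y_d\|_{L^2(\Omega)}^2$ then forces $\bar{y}_1 = \bar{y}_2$, and since the state determines the control via $-\Delta \bar{y}_i = \bar{u}_i$ in $\mathcal{D}'(\Omega)$, we conclude $\bar{u}_1 = \bar{u}_2$.

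The main technical obstacle is the limit passage in the state equation: the weak-$*$ topology on $\mathcal{M}(\Omega)$ is the dual pairing with $C_0(\Omega)$, not a priori with $W_0^{1,r'}(\Omega)$, so one must explicitly use the continuous embedding $W_0^{1,r'}(\Omega)\hookrightarrow C_0(\Omega)$ afforded by $r' > d$ in order to test against the weak-$*$ limit. Once this point is handled, the remaining pieces are standard.
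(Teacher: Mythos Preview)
Your argument is correct. The paper itself does not supply a proof of this theorem; it simply records that the result ``follows from \citep[Proposition 2.2]{CK2011}.'' Your proposal therefore goes well beyond what the paper does, providing a self-contained direct-method proof together with the standard uniqueness argument. The only minor remark is that for the sequential version of Banach--Alaoglu you implicitly rely on the separability of $C_0(\Omega)$, and that the final statement for the full range $1\le r<d/(d-1)$ follows a posteriori from the regularity estimate once the minimizer has been constructed for one fixed $r$ in the interval $(2d/(d+2),d/(d-1))$; both points are routine and your write-up already signals awareness of them.
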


The next result establishes optimality conditions for problem \eqref{eq:continuous_opc}--\eqref{eq:elliptic_pde}; see \citep[Theorem 2.1]{CCK2012} and \citep[Theorem 2.2]{PV2013}.

\begin{theorem}[optimality conditions]
\label{opt_cond}
Let $(\bar y,\bar u) \in W^{1,r}_0(\Omega) \times \mathcal{M}(\Omega)$ be the (unique) solution of \eqref{eq:continuous_opc}--\eqref{eq:elliptic_pde}. Then,
there exists a unique element $\bar{p}\in H^2(\Omega)\cap H_0^1(\Omega)$ satisfying
\begin{equation}\label{eq:adjoint_pde}
(\nabla w, \nabla \bar{p})_{L^2(\Omega)} = (\bar{y}-y_d, w)_{L^2(\Omega)}
\quad
\forall w\in H_0^1(\Omega),
\end{equation}
such that, for all $u\in \mathcal{M}(\Omega)$,
\begin{equation}\label{eq:opt_cond}
-\langle u - \bar{u}, \bar{p}\rangle_\Omega + \alpha \|\bar{u}\|_{\mathcal{M}(\Omega)}
\leq 
\alpha \| u \|_{\mathcal{M}(\Omega)}.
\end{equation}
In addition,
\begin{equation}
\|\bar{p}\|_{C_0(\Omega)} = \alpha \textrm{ if } \bar u \neq 0, \qquad 
\|\bar{p}\|_{C_0(\Omega)} \leq \alpha \textrm{ if } \bar u = 0.
% % % % \begin{cases} 
% % % % = \alpha & \text{if} \; \bar{u}\neq 0,  \\
% % % % \leq \alpha & \text{if} \; \bar{u} = 0.
% % % % \end{cases}
\end{equation}
\end{theorem}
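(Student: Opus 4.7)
The plan is to proceed in three stages, mirroring the three assertions of the theorem. First, I will construct $\bar p$. The right--hand side $\bar y - y_d$ of the adjoint equation lies in $L^2(\Omega)$ because $\bar y \in W_0^{1,r}(\Omega)\hookrightarrow L^2(\Omega)$ for $r$ close enough to $d/(d-1)$ and $y_d \in L^2(\Omega)$ by assumption. Lax--Milgram then yields a unique weak solution $\bar p \in H_0^1(\Omega)$, and since $\Omega$ is convex polytopal, elliptic regularity lifts this to $\bar p \in H^2(\Omega)\cap H_0^1(\Omega)$. For $d\in\{2,3\}$ the Sobolev embedding $H^2(\Omega)\hookrightarrow C(\bar\Omega)$, combined with the homogeneous boundary values, gives $\bar p \in C_0(\Omega)$, which is what makes the pairing $\langle u-\bar u,\bar p\rangle_\Omega$ well defined for measures.

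Second, I will derive the variational inequality \eqref{eq:opt_cond}. Let $S:\mathcal{M}(\Omega)\to L^2(\Omega)$ denote the control--to--state map associated with \eqref{eq:weak_pde}; by Theorem \ref{well_pd_opc} and the embedding $W_0^{1,r}(\Omega)\hookrightarrow L^2(\Omega)$, it is well defined, linear, and continuous. Write the reduced cost as $j(u)=F(u)+\alpha\|u\|_{\mathcal{M}(\Omega)}$ with $F(u)=\tfrac12\|S(u)-y_d\|_{L^2(\Omega)}^2$. Since $F$ is convex and Gateaux differentiable and $\|\cdot\|_{\mathcal{M}(\Omega)}$ is convex, the optimality of $\bar u$ together with the standard subdifferential sum rule delivers
\begin{equation*}
F'(\bar u)(u-\bar u)+\alpha\|u\|_{\mathcal{M}(\Omega)}-\alpha\|\bar u\|_{\mathcal{M}(\Omega)}\geq 0 \qquad \forall u\in\mathcal{M}(\Omega).
\end{equation*}
The crucial identification is $F'(\bar u)(u-\bar u)=\langle u-\bar u,\bar p\rangle_\Omega$. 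To prove this I will test the adjoint equation \eqref{eq:adjoint_pde} with $w=S(u-\bar u)\in H_0^1(\Omega)$ (note that, since $u-\bar u \in\mathcal{M}(\Omega)$, $S(u-\bar u)$ is only in $W_0^{1,r}(\Omega)$; however $S(u)-S(\bar u)$ actually lies in $H_0^1(\Omega)$ because its right--hand side in the PDE sense, as a distribution, applied to the test $\bar p\in H^2\cap H_0^1\subset C_0(\Omega)$, pairs correctly via the dual representation used in \eqref{eq:weak_pde}). Equivalently, and more cleanly, I will symmetrize by testing \eqref{eq:weak_pde} for the state with $\bar p$ and the adjoint equation with the state, obtaining
\begin{equation*}
F'(\bar u)(u-\bar u)=(\bar y-y_d,S(u)-S(\bar u))_{L^2(\Omega)}=(\nabla(S(u)-S(\bar u)),\nabla\bar p)_{L^2(\Omega)}=\langle u-\bar u,\bar p\rangle_\Omega,
\end{equation*}
where the last equality uses $\bar p\in C_0(\Omega)$ together with \eqref{eq:weak_pde}. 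Inserting this into the previous inequality yields \eqref{eq:opt_cond}.

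Third, I will extract the norm characterization of $\bar p$. Choosing $u=0$ and $u=2\bar u$ in \eqref{eq:opt_cond}, the two resulting inequalities combine to force $\langle \bar u,\bar p\rangle_\Omega=\alpha\|\bar u\|_{\mathcal{M}(\Omega)}$. Substituting this back into \eqref{eq:opt_cond} produces $\langle u,\bar p\rangle_\Omega\leq\alpha\|u\|_{\mathcal{M}(\Omega)}$ for every $u\in\mathcal{M}(\Omega)$; since the left--hand side also works for $-u$, applying the duality identity \eqref{measure_norm} gives $\|\bar p\|_{C_0(\Omega)}\leq\alpha$. If $\bar u\neq 0$, the already established equality $\langle\bar u,\bar p\rangle_\Omega=\alpha\|\bar u\|_{\mathcal{M}(\Omega)}$ combined with $|\langle\bar u,\bar p\rangle_\Omega|\leq\|\bar u\|_{\mathcal{M}(\Omega)}\|\bar p\|_{C_0(\Omega)}$ forces the reverse inequality $\|\bar p\|_{C_0(\Omega)}\geq\alpha$.

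The main obstacle will be the second step: justifying the identity $F'(\bar u)(u-\bar u)=\langle u-\bar u,\bar p\rangle_\Omega$ rigorously when $u-\bar u$ is only a measure, because the state increment $S(u)-S(\bar u)$ lives only in $W_0^{1,r}(\Omega)$, which is not admissible as a test function in the $H_0^1$--weak form of the adjoint. The remedy is precisely the extra regularity $\bar p\in H^2(\Omega)\cap H_0^1(\Omega)\subset C_0(\Omega)$ established in the first step, which lets one interpret $\langle u-\bar u,\bar p\rangle_\Omega$ unambiguously and identify it with the $L^2$ inner product via the $H^2$--smoothness of $\bar p$ (integration by parts in the $W^{1,r}$--$W^{1,r'}$ duality after approximating $\bar p$ by smooth compactly supported functions, if needed). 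The remaining steps are then largely algebraic manipulations of the variational inequality and of the duality formula \eqref{measure_norm}.
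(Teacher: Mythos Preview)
The paper does not supply its own proof of this theorem; it simply cites \cite[Theorem~2.1]{CCK2012} and \cite[Theorem~2.2]{PV2013}. Your three--stage strategy (construct $\bar p$ via Lax--Milgram and $H^2$--regularity on a convex polytope; derive the variational inequality from convex optimality and the adjoint identification $F'(\bar u)(v)=\langle v,\bar p\rangle_\Omega$; deduce the norm relations by specializing $u$) is precisely the standard argument used in those references, so there is nothing to contrast at the level of approach.

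Two small corrections are in order. First, the parenthetical in Step~2 claiming that ``$S(u)-S(\bar u)$ actually lies in $H_0^1(\Omega)$'' is false in general: for $u-\bar u\in\mathcal{M}(\Omega)$ the state increment is only in $W_0^{1,r}(\Omega)$, $r<d/(d-1)$. Your subsequent ``cleaner'' route is the right one: since $\bar p\in H^2(\Omega)\cap H_0^1(\Omega)\hookrightarrow W_0^{1,r'}(\Omega)$ for some $r'>d$, you may set $v=\bar p$ in the $W^{1,r}$--$W^{1,r'}$ weak formulation \eqref{eq:weak_pde} of the state, and extend the adjoint identity \eqref{eq:adjoint_pde} to test functions in $W_0^{1,r}(\Omega)$ by density of $C_c^\infty(\Omega)$ (using $\nabla\bar p\in L^{r'}(\Omega)$ and $\bar y-y_d\in L^2(\Omega)$ to pass to the limit on both sides). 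Second, in Step~3 the choices $u=0$ and $u=2\bar u$ in \eqref{eq:opt_cond} give $\langle\bar u,\bar p\rangle_\Omega=-\alpha\|\bar u\|_{\mathcal{M}(\Omega)}$, not $+\alpha\|\bar u\|_{\mathcal{M}(\Omega)}$; with the correct sign, substitution yields $-\langle u,\bar p\rangle_\Omega\le\alpha\|u\|_{\mathcal{M}(\Omega)}$, and replacing $u$ by $-u$ then gives $|\langle u,\bar p\rangle_\Omega|\le\alpha\|u\|_{\mathcal{M}(\Omega)}$, from which \eqref{measure_norm} and the case $\bar u\neq 0$ proceed as you indicate.
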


As a conclusion, the pair $(\bar{y},\bar{u})\in W^{1,r}_0(\Omega) \times \mathcal{M}(\Omega)$ is optimal for \eqref{eq:continuous_opc}--\eqref{eq:elliptic_pde} if and only if the triplet $(\bar{y},\bar{p},\bar{u}) \in W_0^{1,r}(\Omega) \times H_0^1(\Omega) \times \mathcal{M}(\Omega) $ satisfies the following optimality system:
\begin{equation}\label{eq:optimality_system}
% (\bar{y},\bar{p},\bar{u}): \quad 
\begin{cases}
\begin{array}{cl}
\quad(\nabla \bar{y}, \nabla v)_{L^2(\Omega)}  =  \langle\bar u, v\rangle_\Omega
\quad \quad \quad \qquad & \forall v\in W_{0}^{1,r'}(\Omega),\\
(\nabla w, \nabla \bar{p} )_{L^2(\Omega)}  =  (\bar{y}-y_{\Omega},w)_{L^2(\Omega)}
& \forall w\in H_{0}^{1}(\Omega) ,\\
\multicolumn{1}{c}{-\langle u - \bar{u}, \bar{p}\rangle_\Omega + \alpha \|\bar{u}\|_{\mathcal{M}(\Omega)}
\leq 
\alpha \| u \|_{\mathcal{M}(\Omega)}} &  \forall u\in \mathcal{M}(\Omega).
\end{array}
\end{cases}
\end{equation}  

%%%%%%%%%%%%%%%%%%%%%%%%%%%%%%%%%%%%%%%%%%%%%%%%%%%%%%%%%%%%%%%%%%%%%%%
%%%%%%%%%%%%%%%%%%%%%%%%%%%%%%%%%%%%%%%%%%%%%%%%%%%%%%%%%%%%%%%%%%%%%%%
%%%%%%%%%%%%%%%%%%%%%%%%%%%%%%%%%%%%%%%%%%%%%%%%%%%%%%%%%%%%%%%%%%%%%%%
%%%%%%%%%%%%%%%%%%%%%%%%%%%%%%%%%%%%%%%%%%%%%%%%%%%%%%%%%%%%%%%%%%%%%%%
%%%%%%%%%%%%%%%%%%%%%%%%%%%%%%%%%%%%%%%%%%%%%%%%%%%%%%%%%%%%%%%%%%%%%%%
%%%%%%%%%%%%%%%%%%%%%%%%%%%%%%%%%%%%%%%%%%%%%%%%%%%%%%%%%%%%%%%%%%%%%%%

\section{Finite element discretization}\label{FEM}

We recall the finite element approximation of the sparse optimal control problem \eqref{eq:continuous_opc}--\eqref{eq:elliptic_pde} developed in \citep[Section 3]{CCK2012}. In addition, we present convergence rates for such a discretization \cite{PV2013}.

We begin by introducing some ingredients of standard finite element approximation \cite{CiarletBook,Guermond-Ern}. Let $ \mathscr{T} = \{T\}$ be a conforming partition of $\bar \Omega$ into simplices $T$ with size $h_T := \textrm{diam}(T)$, and set $h_{\mathscr{T}}:= \max_{ T \in \mathscr{T}} h_T$. Let us denote by $\mathbb{T}$ the collection of conforming and shape regular meshes that are refinements of $\mathscr{T}_0$, where $\mathscr{T}_0$ represents an initial mesh. Given $\mathscr{T} \in \mathbb{T}$, we denote by $N_\mathscr{T}$ the number of interior nodes of $\mathscr{T}$ and by $\{x_i\}_{i=1}^{N_\mathscr{T}}$ the set of interior nodes of $\mathscr{T}$.

Given a mesh $\mathscr{T} \in \mathbb{T}$, we define the finite element space of continuous piecewise polynomials of degree one as
\begin{equation}
\label{def:piecewise_linear_space}
\mathbb{V}(\mathscr{T})=\{v_\mathscr{T}\in C_0(\Omega): v_{\mathscr{T}}|_{T}\in \mathbb{P}_1(T) \, \forall T\in \mathscr{T} \}.
\end{equation}
Given a node $x_i$ in the mesh $\mathscr{T}$, we introduce the function $\phi_i \in \mathbb{V}(\mathscr{T})$, which is such that $\phi_{i}(x_j) = \delta_{ij}$ for all $j = 1,\ldots,N_\mathscr{T}$. The set $\{\phi_i\}_{i=1}^{N_\mathscr{T}}$ is the so--called Courant basis of the space $\mathbb{V}(\mathscr{T})$.

With this setting at hand, we define the following discrete version of the optimal control problem \eqref{eq:continuous_opc}--\eqref{eq:elliptic_pde}: Find $(y_{\T}, u) \in \mathbb{V}(\T) \times \mathcal{M}(\Omega)$ that minimizes
\begin{equation}\label{eq:discrete_opc}
J(y_\mathscr{T},u)
%\label{eq:discrete_pde}.
\end{equation}
subject to the discrete state equation
\begin{equation}\label{eq:discrete_pde}
(\nabla y_\mathscr{T},\nabla v_\mathscr{T})_{L^2(\Omega)}
=
\langle u, v_\mathscr{T}\rangle_\Omega \qquad \forall v_\mathscr{T}\in \mathbb{V}(\mathscr{T}).
\end{equation}
Notice that, since the control variable is not discretized, the solution technique \eqref{eq:discrete_opc}--\eqref{eq:discrete_pde} corresponds to an instance of the so--called variational discretization approach \citep{hinze2005}.

The discrete optimal control problem \eqref{eq:discrete_opc}--\eqref{eq:discrete_pde} admits a solution.
In contrast to the continous case,  the discrete version of the control--to--state map $S_\mathscr{T}:  \mathcal{M}(\Omega) \ni u \mapsto y_\mathscr{T} \in \mathbb{V}(\mathscr{T})$ is not injective \cite{CCK2012,PV2013}. This and the fact that the norm 
$\| \cdot \|_{\mathcal{M}(\Omega)}$ is not strictly convex imply that $J$ is not strictly convex.
As a consequence, the uniqueness of the control variable cannot be guaranteed. However, among all the possible optimal controls, a special solution can be explicitly characterized using a particular discrete space \citep[Section 3]{CCK2012}: 
\begin{equation}\label{def:discrete_control}
\mathbb{U}(\mathscr{T}):= \left \{ u_\mathscr{T} \in \mathcal{M}(\Omega) \: :\: u_\mathscr{T}=\sum_{i=1}^{N_\mathscr{T}} u_i \delta_{x_i}, \quad u_i\in \mathbb{R},\quad  1 \leq i \leq N_\mathscr{T}  \right\}.
\end{equation}
Notice that this discrete space consists of linear combinations of Dirac measures associated to the interior nodes of the mesh $\mathscr{T}$. Finally, we introduce the following operator \citep[Section 3]{CCK2012}:
\begin{equation}\label{def:operator_lambda}
\Lambda_\mathscr{T} : \mathcal{M}(\Omega) \rightarrow \mathbb{U}(\mathscr{T}),\qquad \Lambda_\mathscr{T}(u)=\sum_{i=1}^{N_\mathscr{T}} \langle u, \phi_i\rangle_\Omega \delta_{x_i}.
\end{equation}

With the previous definitions at hand, we are in position to present the following results related to the convergence of the discrete solutions; see \citep[Theorems 3.2 and 3.5]{CCK2012} and \cite[Theorem 3.1]{PV2013}.

\begin{theorem}[convergence]
\label{discrete_existence_sol}
Among all the optimal controls of problem \eqref{eq:discrete_opc}--\eqref{eq:discrete_pde}, there exists a unique $\bar u_{\mathscr{T}} \in \mathbb{U}(\mathscr{T})$. Any other optimal control $\tilde{u}_\mathscr{T}\in \mathcal{M}(\Omega)$ of  \eqref{eq:discrete_opc}--\eqref{eq:discrete_pde} satisfies that $\Lambda_\mathscr{T}\tilde{u}_\mathscr{T} = \bar{u}_\mathscr{T}$. In addition, we have the following convergence properties as $h_{\mathscr{T}} \rightarrow 0$:
\begin{equation}\label{eq:basic_convergence}
 \bar{u}_\mathscr{T} \stackrel{*}{\rightharpoonup} \bar{u} \text{ in } \mathcal{M}(\Omega), \qquad \|\bar{u}_\mathscr{T}\|_{\mathcal{M}(\Omega)} \rightarrow \|\bar{u}\|_{\mathcal{M}(\Omega)},
\quad
\| \bar y - \bar y_{\T} \|_{L^2(\Omega)} \rightarrow 0.
\end{equation}
\end{theorem}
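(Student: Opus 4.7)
The plan is to split the claim into an existence/uniqueness part and a convergence part. The starting point for the first part is the observation that the discrete state equation \eqref{eq:discrete_pde} only sees the control $u$ through the action pairings $\langle u,\phi_i\rangle_\Omega$. Consequently $S_{\T}u = S_{\T}\Lambda_{\T}u$ for every $u\in\mathcal{M}(\Omega)$, so the fit term $\tfrac{1}{2}\|y_{\T}-y_d\|_{L^2(\Omega)}^2$ is invariant under $u \mapsto \Lambda_{\T} u$. For the norm term I would use that $\{\phi_i\}_{i=1}^{N_{\T}}$ is a nonnegative partition of unity on the set of interior nodes to deduce
\[
\|\Lambda_{\T}u\|_{\mathcal{M}(\Omega)} = \sum_{i=1}^{N_{\T}}|\langle u,\phi_i\rangle_\Omega|
\leq \|u\|_{\mathcal{M}(\Omega)}.
\]
Combining these two facts, $J(S_{\T}\Lambda_{\T}u,\Lambda_{\T}u)\leq J(S_{\T}u,u)$, so from any minimizer $\tilde u_{\T}$ (whose existence follows from the standard direct method: coercivity of $\alpha\|\cdot\|_{\mathcal{M}(\Omega)}$ plus weak$-*$ compactness plus lower semicontinuity) we manufacture a minimizer that lies in $\mathbb{U}(\T)$.

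For the uniqueness within $\mathbb{U}(\T)$, I would first show that $S_{\T}$ restricted to $\mathbb{U}(\T)$ is a linear bijection onto $\mathbb{V}(\T)$: both spaces have dimension $N_{\T}$, and the matrix representing $u_i\mapsto y_{\T}$ in Courant bases is the inverse of the stiffness matrix, which is nonsingular. Suppose then that $u^1_{\T},u^2_{\T}\in\mathbb{U}(\T)$ are both optimal; convexity of $J$ along the segment and strict convexity of $z\mapsto \|z-y_d\|_{L^2(\Omega)}^2$ force $S_{\T}u^1_{\T}=S_{\T}u^2_{\T}$, whence $u^1_{\T}=u^2_{\T}$ by the injectivity just established. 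The identity $\Lambda_{\T}\tilde u_{\T}=\bar u_{\T}$ is then immediate: $\Lambda_{\T}\tilde u_{\T}$ is a minimizer belonging to $\mathbb{U}(\T)$, and uniqueness inside this space forces the equality.

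For the convergence statements I would start from the a priori bound $\alpha\|\bar u_{\T}\|_{\mathcal{M}(\Omega)}\leq J(\bar u_{\T})\leq J(0)=\tfrac{1}{2}\|y_d\|_{L^2(\Omega)}^2$, uniform in $\T$. Banach--Alaoglu provides a subsequence $\bar u_{\T}\stackrel{*}{\rightharpoonup} u^{\star}$ in $\mathcal{M}(\Omega)$; the stability and approximation estimates of \eqref{eq:weak_pde} together with the density of $\bigcup_{\T}\mathbb{V}(\T)$ in $W^{1,r'}_0(\Omega)$ allow me to pass to the limit in \eqref{eq:discrete_pde} and conclude $\bar y_{\T}\to y^{\star}=Su^{\star}$ in $L^2(\Omega)$. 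Lower semicontinuity of $J$ combined with the comparison $J(Su^{\star},u^{\star})\leq \liminf J(\bar y_{\T},\bar u_{\T})\leq \limsup J(\bar y_{\T},\bar u_{\T})\leq J(S(\Lambda_{\T}\bar u),\Lambda_{\T}\bar u)\to J(\bar y,\bar u)$ (the last limit from $\|\Lambda_{\T}\bar u\|_{\mathcal{M}(\Omega)}\to\|\bar u\|_{\mathcal{M}(\Omega)}$, which in turn uses the density of $C_0(\Omega)$ test functions and quasi-interpolation of smooth elements of $C_0(\Omega)$ into $\mathbb{V}(\T)$) yield $u^{\star}=\bar u$ by Theorem \ref{well_pd_opc}, and moreover $J(\bar u_{\T})\to J(\bar u)$. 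Since $\|\bar y_{\T}-y_d\|_{L^2(\Omega)}^2\to \|\bar y-y_d\|_{L^2(\Omega)}^2$, subtracting gives $\|\bar u_{\T}\|_{\mathcal{M}(\Omega)}\to \|\bar u\|_{\mathcal{M}(\Omega)}$. A standard subsequence argument upgrades the convergences from subsequences to full sequences since the limit $\bar u$ is unique.

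The main obstacle I anticipate is the $\limsup$ step, namely constructing a sufficiently good ``comparison'' control in $\mathbb{U}(\T)$ that reproduces $\bar u$ in the limit both in state and in measure norm; the natural candidate is $\Lambda_{\T}\bar u$, but one must verify $\Lambda_{\T}\bar u\stackrel{*}{\rightharpoonup}\bar u$ together with $\|\Lambda_{\T}\bar u\|_{\mathcal{M}(\Omega)}\to\|\bar u\|_{\mathcal{M}(\Omega)}$, which requires a careful argument based on uniform approximation of functions in $C_0(\Omega)$ by their Lagrange or Clément interpolants on shape regular meshes. Once this technical density/approximation ingredient is in hand, the rest of the proof is essentially bookkeeping.
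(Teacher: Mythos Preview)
The paper does not actually supply a proof of this theorem; it merely records the statement and refers the reader to \cite[Theorems~3.2 and~3.5]{CCK2012} and \cite[Theorem~3.1]{PV2013}. Your proposal is therefore not competing with a proof in the paper but rather reconstructing the argument from those references, and it does so correctly: the key observations that $S_{\T}u=S_{\T}\Lambda_{\T}u$ and $\|\Lambda_{\T}u\|_{\mathcal{M}(\Omega)}\le\|u\|_{\mathcal{M}(\Omega)}$, the injectivity of $S_{\T}$ on $\mathbb{U}(\T)$ via the stiffness matrix, and the direct-method/liminf--limsup scheme for convergence are precisely the ingredients used in \cite{CCK2012}. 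One small slip: in the $\limsup$ comparison you should write $S_{\T}(\Lambda_{\T}\bar u)$ (the discrete state) rather than $S(\Lambda_{\T}\bar u)$, since the optimality of $\bar u_{\T}$ is for the discrete problem; you then need $S_{\T}\bar u\to S\bar u=\bar y$ in $L^2(\Omega)$, which is the standard FEM convergence for the Poisson problem with measure data. Your identification of the one genuinely technical point---verifying $\Lambda_{\T}\bar u\stackrel{*}{\rightharpoonup}\bar u$ and $\|\Lambda_{\T}\bar u\|_{\mathcal{M}(\Omega)}\to\|\bar u\|_{\mathcal{M}(\Omega)}$ via uniform convergence of the nodal interpolant on $C_0(\Omega)$---is accurate and matches the treatment in \cite[Lemma~3.4]{CCK2012}.
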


The following results present optimality conditions for the discrete optimal control problem \eqref{eq:discrete_opc}--\eqref{eq:discrete_pde}; see \citep[Theorem 3.2]{PV2013}.

\begin{theorem}[discrete optimality conditions]
\label{discrete_opt_cond} 
Let $(\bar{y}_\mathscr{T},\bar{u}_\mathscr{T})\in \mathbb{V}(\mathscr{T}) \times \mathbb{U}(\mathscr{T})$ be the discrete solution, as in Theorem \ref{discrete_existence_sol}. Then there exists a unique discrete adjoint state $\bar{p}_\mathscr{T}\in \mathbb{V}(\mathscr{T})$ such that 
\begin{equation}\label{eq:discrete_adj_st}
(\nabla w_\mathscr{T}, \nabla \bar{p}_\mathscr{T})_{L^2(\Omega)}
=
(\bar{y}_\mathscr{T}-y_d, w_\mathscr{T})_{L^2(\Omega)}
\quad \forall w_\mathscr{T}\in \mathbb{V}(\mathscr{T}),
\end{equation}
and that satisfies 
\begin{equation}\label{eq:discrete_opt_cond}
-\langle u - \bar{u}_\mathscr{T}, \bar{p}_\mathscr{T}\rangle_\Omega + \alpha \|\bar{u}_\mathscr{T}\|_{\mathcal{M}(\Omega)}
\leq 
\alpha \| u \|_{\mathcal{M}(\Omega)} \quad \forall u\in \mathcal{M}(\Omega).
\end{equation}
\end{theorem}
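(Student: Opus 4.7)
The plan is to first settle well-posedness of the discrete adjoint, and then derive the variational inequality \eqref{eq:discrete_opt_cond} through a routine first-order convex analysis of the reduced discrete cost, in direct analogy with the continuous Theorem \ref{opt_cond} but carried out entirely at the finite element level. Existence and uniqueness of $\bar{p}_\mathscr{T} \in \mathbb{V}(\mathscr{T})$ satisfying \eqref{eq:discrete_adj_st} would follow from Lax--Milgram on the finite-dimensional space $\mathbb{V}(\mathscr{T}) \subset H_0^1(\Omega)$: the bilinear form $(\nabla \cdot, \nabla \cdot)_{L^2(\Omega)}$ is coercive and continuous there (via Poincar\'e), and the linear functional $w_\mathscr{T} \mapsto (\bar{y}_\mathscr{T} - y_d, w_\mathscr{T})_{L^2(\Omega)}$ is continuous because $\bar{y}_\mathscr{T}, y_d \in L^2(\Omega)$.

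For the variational inequality I would exploit the convexity of $J$ and the linearity of the discrete control-to-state map $S_\mathscr{T}$. Given $u \in \mathcal{M}(\Omega)$ and $t \in (0,1]$, set $u_t := \bar{u}_\mathscr{T} + t(u - \bar{u}_\mathscr{T})$ and note that $S_\mathscr{T} u_t = \bar{y}_\mathscr{T} + t(S_\mathscr{T} u - \bar{y}_\mathscr{T})$ by linearity. Since Theorem \ref{discrete_existence_sol} guarantees that $\bar{u}_\mathscr{T}$ is optimal among all of $\mathcal{M}(\Omega)$, one has $J(S_\mathscr{T} u_t, u_t) \geq J(\bar{y}_\mathscr{T}, \bar{u}_\mathscr{T})$. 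Expanding this inequality, invoking convexity of $\|\cdot\|_{\mathcal{M}(\Omega)}$ to bound $\|u_t\|_{\mathcal{M}(\Omega)} - \|\bar{u}_\mathscr{T}\|_{\mathcal{M}(\Omega)} \leq t(\|u\|_{\mathcal{M}(\Omega)} - \|\bar{u}_\mathscr{T}\|_{\mathcal{M}(\Omega)})$, dividing by $t > 0$, and sending $t \to 0^+$ should produce
\begin{equation*}
(\bar{y}_\mathscr{T} - y_d,\, S_\mathscr{T} u - \bar{y}_\mathscr{T})_{L^2(\Omega)} + \alpha\bigl(\|u\|_{\mathcal{M}(\Omega)} - \|\bar{u}_\mathscr{T}\|_{\mathcal{M}(\Omega)}\bigr) \geq 0.
\end{equation*}
I would then rewrite the tracking term by testing \eqref{eq:discrete_adj_st} with $S_\mathscr{T} u - \bar{y}_\mathscr{T} \in \mathbb{V}(\mathscr{T})$ and the two discrete state equations (for $S_\mathscr{T} u$ and $\bar{y}_\mathscr{T}$) with $\bar{p}_\mathscr{T} \in \mathbb{V}(\mathscr{T})$; symmetry of the Dirichlet form yields $(\bar{y}_\mathscr{T} - y_d, S_\mathscr{T} u - \bar{y}_\mathscr{T})_{L^2(\Omega)} = \langle u - \bar{u}_\mathscr{T}, \bar{p}_\mathscr{T} \rangle_\Omega$, which rearranges to \eqref{eq:discrete_opt_cond}.

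I do not anticipate a serious obstacle; the argument is essentially a finite-dimensional replay of the derivation of the continuous optimality system. The only subtlety worth flagging is that the perturbation direction $u$ is allowed to range over the full measure space $\mathcal{M}(\Omega)$ rather than only over the discrete set $\mathbb{U}(\mathscr{T})$. This is legitimate precisely because Theorem \ref{discrete_existence_sol} identifies $\bar{u}_\mathscr{T}$ as an optimal control in $\mathcal{M}(\Omega)$, so $u_t = \bar{u}_\mathscr{T} + t(u - \bar{u}_\mathscr{T}) \in \mathcal{M}(\Omega)$ remains admissible in the discrete problem for every $u \in \mathcal{M}(\Omega)$.
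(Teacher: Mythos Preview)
Your argument is correct and is the standard route to this result. Note, however, that the paper does not actually supply a proof of this theorem: it merely cites \cite[Theorem~3.2]{PV2013} and states the conclusion, so there is no in-paper proof to compare against. What you have written is precisely the kind of derivation one expects to find behind that citation---Lax--Milgram for the discrete adjoint together with a first-order perturbation of the reduced cost and the adjoint identity $(\bar y_\mathscr{T}-y_d,\,S_\mathscr{T} u-\bar y_\mathscr{T})_{L^2(\Omega)}=\langle u-\bar u_\mathscr{T},\bar p_\mathscr{T}\rangle_\Omega$---and your explicit remark that the perturbation $u$ may be taken in all of $\mathcal{M}(\Omega)$ (because Theorem~\ref{discrete_existence_sol} asserts optimality of $\bar u_\mathscr{T}$ over the full measure space) is exactly the point that makes \eqref{eq:discrete_opt_cond} hold for every $u\in\mathcal{M}(\Omega)$ rather than only for $u\in\mathbb{U}(\mathscr{T})$.
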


% \FF{As in the continuous case, we can state that} 
Consequently, the discrete pair $(\bar{y}_\mathscr{T},\bar{u}_\mathscr{T}) \in \mathbb{V}(\mathscr{T}) \times \mathbb{U}(\mathscr{T})$ is optimal for \eqref{eq:discrete_opc}--\eqref{eq:discrete_pde} if and only if the triplet $(\bar{y}_\mathscr{T},\bar{p}_\mathscr{T},\bar{u}_\mathscr{T}) \in \mathbb{V}(\mathscr{T}) \times \mathbb{V}(\mathscr{T}) \times \mathbb{U}(\mathscr{T})$ solves
\begin{equation}\label{eq:discrete_optimality_system}
\begin{cases}
\begin{array}{cl}
\quad(\nabla \bar{y}_\mathscr{T}, \nabla v_\mathscr{T})_{L^2(\Omega)}  =  \langle\bar u_\mathscr{T}, v_\mathscr{T}\rangle_\Omega
\quad \quad \quad \qquad & \forall v_\mathscr{T}\in \mathbb{V}(\mathscr{T}),\\
(\nabla w_\mathscr{T}, \nabla \bar{p}_\mathscr{T} )_{L^2(\Omega)}  =  (\bar{y}_\mathscr{T}-y_{\Omega},w_\mathscr{T})_{L^2(\Omega)}
& \forall w_\mathscr{T}\in \mathbb{V}(\mathscr{T}) ,\\
\multicolumn{1}{c}{-\langle u - \bar{u}_\mathscr{T}, \bar{p}_\mathscr{T}\rangle_\Omega + \alpha \|\bar{u}_\mathscr{T}\|_{\mathcal{M}(\Omega)}
\leq 
\alpha \| u \|_{\mathcal{M}(\Omega)}} &  \forall u\in \mathcal{M}(\Omega).
\end{array}
\end{cases}
\end{equation}  

To conclude this section, we present a priori error estimates for the approximation of the optimal state variable. To state it,
% % 
% % a result that involves the solution of the continuous problem and its discrete counterpart; cf. \eqref{eq:continuous_opc}--\eqref{eq:elliptic_pde} and \eqref{eq:discrete_opc}--\eqref{eq:discrete_pde}. To do this, 
we will need an extra assumption on the desired state $y_d$; see \citep[Section 4]{PV2013}. Let us assume that
\begin{equation}\label{assumption}
y_d \in  L^\infty(\Omega) \textrm{ for } d = 2, \quad y_d \in L^3(\Omega) \textrm{ for } d = 3.
% % % \begin{cases}
% % % L^\infty(\Omega) &\text{ for } d = 2,\\
% % % L^3(\Omega) &\text{ for } d = 3.
% % % \end{cases}
\end{equation}
\begin{theorem}[a priori error estimates]\label{state_convergence}
Let $y_d$ satisfy \eqref{assumption}. Let $(\bar{y},\bar{u})\in W_0^{1,r}(\Omega) \times \mathcal{M}(\Omega)$ be the solution of problem \eqref{eq:continuous_opc}--\eqref{eq:elliptic_pde}, and let $(\bar{y}_\mathscr{T},\bar{u}_\mathscr{T})\in \mathbb{V}(\mathscr{T})\times \mathbb{U}(\mathscr{T}) $ the solution of problem \eqref{eq:discrete_opc}--\eqref{eq:discrete_pde}, given as in Theorem \ref{discrete_existence_sol}. Then there holds
\begin{align}\label{eq:improved_estimate_1}
\|\bar{y}-\bar{y}_\T\|_{L^2(\Omega)} 
& \lesssim
h^{2-d/2}_\mathscr{T}|\ln{h_\mathscr{T}}|^\frac{\gamma}{2},
\\
\|\bar{u}-\bar{u}_\T\|_{H^{-2}(\Omega)} 
& \lesssim 
h^{2-d/2}_\mathscr{T}|\ln{h_\mathscr{T}}|^\frac{\gamma}{2}, \label{eq:improved_estimate_2}
\end{align}
with $\gamma=\frac{7}{2}$ for $d=2$, and $\gamma=1$ for $d=3$.
\end{theorem}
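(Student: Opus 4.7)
The plan is to exploit the two variational inequalities supplied by Theorems~\ref{opt_cond} and~\ref{discrete_opt_cond}. Testing \eqref{eq:opt_cond} with $u=\bar{u}_{\mathscr{T}}$ and \eqref{eq:discrete_opt_cond} with $u=\bar{u}$ and adding the two, the measure-norm terms cancel and one is left with the monotonicity-type estimate
\begin{equation*}
\langle \bar u-\bar u_{\mathscr{T}},\,\bar p-\bar p_{\mathscr{T}}\rangle_{\Omega} \le 0.
\end{equation*}
This is the only place at which the optimality structure is used; the rest of the analysis is driven by classical Galerkin error bounds.

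I would next introduce two auxiliary Ritz--Galerkin projections $\hat y_{\mathscr{T}},\hat p_{\mathscr{T}}\in\mathbb{V}(\mathscr{T})$ defined, respectively, by $(\nabla\hat y_{\mathscr{T}},\nabla v_{\mathscr{T}})_{L^2(\Omega)}=\langle\bar u,v_{\mathscr{T}}\rangle_{\Omega}$ and $(\nabla w_{\mathscr{T}},\nabla\hat p_{\mathscr{T}})_{L^2(\Omega)}=(\bar y-y_d,w_{\mathscr{T}})_{L^2(\Omega)}$ for every $v_{\mathscr{T}},w_{\mathscr{T}}\in\mathbb{V}(\mathscr{T})$. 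By construction, $\hat y_{\mathscr{T}}-\bar y_{\mathscr{T}}$ satisfies the discrete state equation with source $\bar u-\bar u_{\mathscr{T}}$, while $\hat p_{\mathscr{T}}-\bar p_{\mathscr{T}}$ satisfies the discrete adjoint equation with datum $\bar y-\bar y_{\mathscr{T}}$. Testing each of these two discrete equations against the other's solution produces the identity $(\bar y-\bar y_{\mathscr{T}},\hat y_{\mathscr{T}}-\bar y_{\mathscr{T}})_{L^2(\Omega)}=\langle\bar u-\bar u_{\mathscr{T}},\hat p_{\mathscr{T}}-\bar p_{\mathscr{T}}\rangle_{\Omega}$. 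Inserting $\pm\bar p$ on the right, the pairing with $\bar p-\bar p_{\mathscr{T}}$ is non-positive by the first step, and the remaining one is controlled by $\|\bar u-\bar u_{\mathscr{T}}\|_{\mathcal{M}(\Omega)}\|\bar p-\hat p_{\mathscr{T}}\|_{L^\infty(\Omega)}$. Writing the splitting $\|\bar y-\bar y_{\mathscr{T}}\|_{L^2(\Omega)}^2=(\bar y-\bar y_{\mathscr{T}},\bar y-\hat y_{\mathscr{T}})_{L^2(\Omega)}+(\bar y-\bar y_{\mathscr{T}},\hat y_{\mathscr{T}}-\bar y_{\mathscr{T}})_{L^2(\Omega)}$ and absorbing the Cauchy--Schwarz contribution via Young's inequality then delivers
\begin{equation*}
\|\bar y-\bar y_{\mathscr{T}}\|_{L^2(\Omega)}^2 \lesssim \|\bar y-\hat y_{\mathscr{T}}\|_{L^2(\Omega)}^2 + \|\bar u-\bar u_{\mathscr{T}}\|_{\mathcal{M}(\Omega)}\,\|\bar p-\hat p_{\mathscr{T}}\|_{L^\infty(\Omega)}.
\end{equation*}

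The proof of \eqref{eq:improved_estimate_1} would then be completed by invoking three classical ingredients: Scott/Casas--type $L^2$ estimates for Poisson's equation with measure-valued datum yield $\|\bar y-\hat y_{\mathscr{T}}\|_{L^2(\Omega)}\lesssim h_{\mathscr{T}}^{2-d/2}|\ln h_{\mathscr{T}}|^{\sigma_1}$; pointwise (maximum-norm) finite element estimates for Poisson's equation applied to the adjoint---whose right-hand side $\bar y-y_d$ enjoys the integrability demanded by \eqref{assumption}---give $\|\bar p-\hat p_{\mathscr{T}}\|_{L^\infty(\Omega)}\lesssim h_{\mathscr{T}}^{2(2-d/2)}|\ln h_{\mathscr{T}}|^{\sigma_2}$; and the trivial a priori bound $\alpha\|\bar u_{\mathscr{T}}\|_{\mathcal{M}(\Omega)}\le J(\bar y_{\mathscr{T}},\bar u_{\mathscr{T}})\le J(0,0)$ ensures $\|\bar u_{\mathscr{T}}\|_{\mathcal{M}(\Omega)}\lesssim 1$. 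Matching the resulting powers of $h_{\mathscr{T}}$ and $|\ln h_{\mathscr{T}}|$ produces \eqref{eq:improved_estimate_1} with $\gamma=7/2$ for $d=2$ and $\gamma=1$ for $d=3$. Estimate \eqref{eq:improved_estimate_2} then follows by a short duality argument: letting $\tilde y\in W_0^{1,r}(\Omega)$ denote the continuous state with source $\bar u_{\mathscr{T}}$, integration by parts gives $\langle\bar u-\bar u_{\mathscr{T}},v\rangle_{\Omega}=-(\bar y-\tilde y,\Delta v)_{L^2(\Omega)}$ for every $v\in H^2(\Omega)\cap H_0^1(\Omega)$, whence $\|\bar u-\bar u_{\mathscr{T}}\|_{H^{-2}(\Omega)}\lesssim\|\bar y-\tilde y\|_{L^2(\Omega)}$, and a triangle inequality together with the Galerkin error for $\tilde y-\bar y_{\mathscr{T}}$ transfers \eqref{eq:improved_estimate_1} to the control error. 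The step I expect to be the most delicate is the sharp bookkeeping of the logarithmic exponents in the $L^2$ state and $L^\infty$ adjoint estimates for data with reduced regularity, which rely on dimension-specific weighted/Green's-function techniques but are by now standard in the finite element literature.
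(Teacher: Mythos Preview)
Your proposal is correct and is precisely the strategy of \cite[Theorem~4.4 and Corollary~4.5]{PV2013}, which is all the paper invokes for this result: the paper gives no argument of its own here beyond the citation. The combination of the two variational inequalities, the auxiliary Ritz projections $\hat y_{\mathscr{T}},\hat p_{\mathscr{T}}$, the discrete duality identity, and the closing $H^{-2}$ argument via the continuous state $\tilde y$ with source $\bar u_{\mathscr{T}}$ match that reference essentially line by line, so there is nothing to add.
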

\begin{proof}
We refer the reader to \citep[Theorem 4.4]{PV2013} for a proof of \eqref{eq:improved_estimate_1} and \citep[Corollary 4.5]{PV2013} for \eqref{eq:improved_estimate_2}.
\end{proof}

\begin{theorem}[a priori error estimates]\label{state_convergence_2}
Let $y_d \in L^{\infty}(\Omega)$, which implies that the solution $(\bar{y},\bar{u})\in W_0^{1,r}(\Omega) \times \mathcal{M}(\Omega)$ of problem \eqref{eq:continuous_opc}--\eqref{eq:elliptic_pde} satisfies that $\bar y \in H_0^1(\Omega) \cap L^{\infty}(\Omega)$. If $(\bar{y}_\mathscr{T},\bar{u}_\mathscr{T})\in \mathbb{V}(\mathscr{T})\times \mathbb{U}(\mathscr{T}) $ denote the solution of problem \eqref{eq:discrete_opc}--\eqref{eq:discrete_pde}, given as in Theorem \ref{discrete_existence_sol}, then
\begin{equation}
\label{eq:improved_estimate_3}
\|\bar{y}-\bar{y}_\T\|_{L^2(\Omega)} 
\lesssim
 h_\mathscr{T}|\ln{h_\mathscr{T}}|^\frac{\rho}{2},
\end{equation}
with $\rho=2$ for $d=2$ and $\rho=11/4$ for $d=3$.
\end{theorem}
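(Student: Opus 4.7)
The plan is to imitate the duality argument underlying Theorem~\ref{state_convergence}, but to exploit in two different places the extra regularity $\bar y \in H_0^1(\Omega) \cap L^\infty(\Omega)$ supplied by the hypothesis $y_d \in L^\infty(\Omega)$. The first exploitation upgrades the state Ritz--projection error in $L^2(\Omega)$ from the generic measure--data rate $h_\T^{2-d/2}$ to the optimal $h_\T$; the second yields a maximum--norm error of order $h_\T^{2}|\ln h_\T|^{\sigma(d)}$ for the adjoint, rather than the energy--norm rate that would suffice under weaker hypotheses.

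The starting point is the monotonicity relation $\langle \bar u - \bar u_\T,\bar p - \bar p_\T\rangle_\Omega \leq 0$, obtained by testing \eqref{eq:opt_cond} with $\bar u_\T$, testing \eqref{eq:discrete_opt_cond} with $\bar u$, and adding so that the non--differentiable norm terms cancel. I would then introduce two auxiliary discrete functions: the Ritz projection $\hat y_\T \in \mathbb{V}(\T)$ of $\bar y$ (well defined because $\bar y \in H_0^1(\Omega)$, and automatically coinciding with the discrete state driven by the measure $\bar u$) and the discrete adjoint $\hat p_\T \in \mathbb{V}(\T)$ driven by the \emph{continuous} datum $\bar y - y_d$. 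Splitting $\bar p - \bar p_\T = (\bar p - \hat p_\T) + (\hat p_\T - \bar p_\T)$ in the monotonicity relation and rewriting the second summand by testing \eqref{eq:weak_pde} and \eqref{eq:discrete_pde} against $\hat p_\T - \bar p_\T \in \mathbb{V}(\T) \subset W_0^{1,\infty}(\Omega)$, together with Galerkin orthogonality of the Ritz projection, yields
\begin{equation*}
\|\bar y - \bar y_\T\|_{L^2(\Omega)}^{2}
\leq
\|\bar u - \bar u_\T\|_{\mathcal{M}(\Omega)}\,\|\bar p - \hat p_\T\|_{C_0(\Omega)}
+
\|\bar y - \bar y_\T\|_{L^2(\Omega)}\,\|\bar y - \hat y_\T\|_{L^2(\Omega)},
\end{equation*}
after which Young's inequality absorbs the second summand onto the left--hand side.

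Closing the argument reduces to estimating the three remaining factors. The factor $\|\bar u - \bar u_\T\|_{\mathcal{M}(\Omega)}$ is uniformly bounded by the stability asserted in Theorem~\ref{discrete_existence_sol}. For the Ritz error of the state, an Aubin--Nitsche duality on the convex polytope $\Omega$ gives $\|\bar y - \hat y_\T\|_{L^2(\Omega)}\lesssim h_\T \|\nabla \bar y\|_{L^2(\Omega)}$; this is precisely where the hypothesis $\bar y \in H_0^1(\Omega)$ is decisive, since without it Theorem~\ref{state_convergence} would yield only $h_\T^{2-d/2}$. The main obstacle, to which the log exponents in the final estimate can be traced, is the maximum--norm bound $\|\bar p - \hat p_\T\|_{C_0(\Omega)}\lesssim h_\T^{2}|\ln h_\T|^{\sigma(d)}$: because $\bar y - y_d$ lies only in $L^\infty(\Omega)$ (not in $C(\bar\Omega)$), one has $\bar p \in W^{2,q}(\Omega)$ for every finite $q$ but not in general $\bar p \in W^{2,\infty}(\Omega)$, so Schatz--Wahlbin pointwise estimates on convex polytopes must be invoked and their log exponents carefully tracked as $q \to \infty$. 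Inserting the resulting $\sigma(d)$ into the master bound and taking square roots produces the advertised estimate with $\rho = 2\sigma(d)$, yielding $\rho = 2$ for $d=2$ and $\rho = 11/4$ for $d=3$. For the detailed bookkeeping of the logarithmic factors we refer the reader to \cite[Theorem 4.6]{PV2013}.
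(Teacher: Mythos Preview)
The paper's own ``proof'' is nothing more than a pointer to \cite[Theorem~5.1]{PV2013}, and your sketch faithfully reproduces the argument found there: combine the two variational inequalities to obtain $\langle \bar u-\bar u_\T,\bar p-\bar p_\T\rangle_\Omega\le 0$, insert the Ritz projection $\hat y_\T$ of $\bar y$ (well defined precisely because $\bar y\in H_0^1(\Omega)$) together with the discrete adjoint $\hat p_\T$ driven by the continuous datum $\bar y-y_d$, and reduce everything to an Aubin--Nitsche bound $\|\bar y-\hat y_\T\|_{L^2(\Omega)}\lesssim h_\T$ and a Schatz--Wahlbin pointwise bound $\|\bar p-\hat p_\T\|_{L^\infty(\Omega)}\lesssim h_\T^2|\ln h_\T|^{\sigma(d)}$.

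Two small corrections. First, a bookkeeping slip: from your displayed master inequality (after absorbing the Young term) you get
\[
\|\bar y-\bar y_\T\|_{L^2(\Omega)}^{2}\;\lesssim\;h_\T^{2}|\ln h_\T|^{\sigma(d)}+h_\T^{2},
\]
so taking the square root gives $\rho=\sigma(d)$, not $\rho=2\sigma(d)$; the values $\sigma(2)=2$ and $\sigma(3)=11/4$ are the ones that emerge from the pointwise analysis with merely $L^\infty$ right-hand side. Second, the result you want to cite in \cite{PV2013} is Theorem~5.1, not Theorem~4.6 (Theorem~4.4 of that reference is what underlies the weaker estimate of Theorem~\ref{state_convergence} here).
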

\begin{proof}
We refer the reader to \citep[Theorem 5.1]{PV2013}.
\end{proof}

%%%%%%%%%%%%%%%%%%%%%%%%%%%%%%%%%%%%%%%%%%%%%%%%%%%%%%%%%%%%%%%%%%%%%%%
%%%%%%%%%%%%%%%%%%%%%%%%%%%%%%%%%%%%%%%%%%%%%%%%%%%%%%%%%%%%%%%%%%%%%%%
%%%%%%%%%%%%%%%%%%%%%%%%%%%%%%%%%%%%%%%%%%%%%%%%%%%%%%%%%%%%%%%%%%%%%%%
%%%%%%%%%%%%%%%%%%%%%%%%%%%%%%%%%%%%%%%%%%%%%%%%%%%%%%%%%%%%%%%%%%%%%%%
%%%%%%%%%%%%%%%%%%%%%%%%%%%%%%%%%%%%%%%%%%%%%%%%%%%%%%%%%%%%%%%%%%%%%%%
%%%%%%%%%%%%%%%%%%%%%%%%%%%%%%%%%%%%%%%%%%%%%%%%%%%%%%%%%%%%%%%%%%%%%%%

\section{A posteriori error analysis for the Laplacian}\label{laplacian_afem}
In the next section we will construct an a posteriori error estimator for the sparse optimal control problem \eqref{eq:continuous_opc}--\eqref{eq:elliptic_pde} that will be based on two error contributions: one associated to the discretization of the state equation \eqref{eq:weak_pde} and another one related to the discretization of the adjoint equation \eqref{eq:adjoint_pde}. In order to design these contributions, and in an effort to make the presentation of the material as clear as possible, in this section we briefly review a posteriori error estimates for Poisson problems. We first review the $L^2$ a posteriori error estimator, developed in \cite{ABR2006}, for a Poisson problem that involves a Dirac measure as a source term, and then the pointwise a posteriori error estimator of \cite{AORS2017} for a Poisson problem involving an unbounded forcing term. The latter is needed because the adjoint problem \eqref{eq:adjoint_pde} has the function $\bar y - y_d$ as a forcing term, which in general does not belong to $L^{\infty}(\Omega)$. In order to have that 
$\bar y - y_d \in L^{\infty}(\Omega)$, an additional assumption must be imposed: $y_d \in L^{\infty}(\Omega)$; see \cite[Theorem 2.5]{PV2013}.

\subsection{A posteriori  error estimates for the Laplacian with Dirac sources}
\label{dirac_estimator}
Let $\xi$ be an interior point of $\Omega$ and consider the following elliptic boundary value problem: Find $z$ such that
\begin{equation}
\label{eq:dirac_eq}
-\Delta z=\delta_{\xi} \textrm{ in } \Omega,\quad z=0 \textrm{ on } \partial\Omega.
\end{equation}
Consider the following weak formulation of problem \eqref{eq:dirac_eq}:
\begin{equation}
\label{eq:weak_dirac_eq}
z\in W_0^{1,r}(\Omega): \quad
(\nabla z, \nabla v)_{L^2(\Omega)} = \delta_{\xi}(v) \quad \forall v\in W_0^{1,r'}(\Omega),
\end{equation}
where $1\leq r < d/(d-1)$ and $r'$ denotes its conjugate exponent. We immediately notice that since $r' > d$, we have that $W^{1,r'}(\Omega) \hookrightarrow C(\bar \Omega)$ and consequently, the term on the right--hand side of \eqref{eq:weak_dirac_eq} is well--defined.

We now define the Galerkin approximation to \eqref{eq:weak_dirac_eq} as the solution to the following problem: 
\begin{equation}
\label{eq:discrete_dirac}
z_\mathscr{T} \in \mathbb{V}(\mathscr{T}): \quad (\nabla z_\mathscr{T},\nabla v_\mathscr{T})_{L^2(\Omega)}=\delta_{\xi}(v_\mathscr{T})\quad\forall\:v_\mathscr{T}\in \mathbb{V}(\mathscr{T}),
\end{equation}
where the discrete space $\mathbb{V}(\mathscr{T})$ is defined in \eqref{def:piecewise_linear_space}.

In order the present the error estimator developed in \cite{ABR2006}, we introduce standard notation in a posteriori error analysis \cite{ver2013}. We define $\Sides$ as the set of internal $(d-1)$--dimensional interelement boundaries $S $ of $\T$. For $T \in \T$, let $\Sides_T$ denote the subset of $\Sides$ that contains the sides in $\Sides$ which are sides of $T$. We also denote by $\mathcal{N}_{S}$ the subset of $\T$ that contains the two elements that have $S$ as a side. In addition, we define the following patches associated with an element $T \in \T$:
\begin{equation}\label{NT}
\mathcal{N}_T:= \bigcup_{T' \in \T: T \cap T' \neq \emptyset} T',
\end{equation}
and
\begin{equation}\label{NT_star}
\mathcal{N}_T^*:= \bigcup_{T' \in \T: \Sides_T \cap \Sides_{T'} \neq \emptyset} T'.
\end{equation}

Given a discrete function $z_{\T} \in \mathbb{V}(\T)$, we define, for any internal side $S \in \Sides$,
% \AJS{$S \in \Sides$, and $\mathcal{N}_S = \{ T^+, T^-\}$} we define 
the jump or interelement residual  $[\![ \nabla z_\mathscr{T}\cdot \nu ]\!]$ by
\begin{equation}\label{def:jump}
[\![ \nabla z_\mathscr{T}\cdot \nu ]\!]= \nu^{+} \cdot \nabla z_{\mathscr{T}}|_{T^{+}} + \nu^{-} \cdot \nabla z_{\mathscr{T}}|_{T^{-}},
\end{equation}
where $\mathcal{N}_S = \{ T^+, T^-\}$ and $\nu^{+}, \nu^{-}$ denote the unit normals to $S$ pointing towards $T^{+}$, $T^{-} \in \T$, respectively.

With these ingredients at hand, we define the following a posteriori error indicators. If $\xi \notin \{x_i\}_{i=1}^{N_\mathscr{T}}$ but $\xi \in T$, then
\begin{equation}\label{def:basic_estimator1}
\E_{\delta}^2(z_\T;T):=  h_T^{4-d} \chi(T) + 
h_T^{3}\|[\![ \nabla z_\mathscr{T}\cdot \nu ]\!]\|_{L^2(\partial T \setminus \partial \Omega)}^2,
\end{equation}
otherwise,
\begin{equation}\label{def:basic_estimator2}
\E_{\delta}^2(z_\T;T):=  h_T^{3}\|[\![ \nabla z_\mathscr{T}\cdot \nu ]\!]\|_{L^2(\partial T \setminus \partial \Omega)}^2.
\end{equation}
With these indicators at hand, we thus define the global a posteriori error estimator 
\[
\E_{\delta}^{2}(z_\T;\T):=\left(\sum_{T\in\mathscr{T}}\E_{\delta}^{2}(z_\T;T)\right)^{\frac{1}{2}}.
\]

The following result states the reliability and local efficiency of the global error estimator $\E_{\delta}$. For a proof, we refer the reader to \citep[Theorem 4.1]{ABR2006}.

\begin{theorem}
Let $z \in W_0^{1,r}(\Omega)$ and $z_{\T} \in \mathbb{V}(\T)$ be the solutions to problems \eqref{eq:dirac_eq} and \eqref{eq:discrete_dirac}, respectively.  We thus have that
\begin{equation}
\label{eq:l2estimation}
\|z-z_{\mathscr{T}}\|_{L^{2}(\Omega)} \lesssim \E_{\delta}(z_{\T};\T),
\end{equation} 
and
\begin{equation}
\label{eq:locefficiencydelta}
  \E_\delta(z_\T,T) \lesssim \| z - z_\T \|_{L^2(\mathcal{N}_\T)},
\end{equation}
where the hidden constants are independent of $z$, $z_\T$, $T$ and the cardinality of $\T$.
\end{theorem}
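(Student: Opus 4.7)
The plan is to prove the reliability bound \eqref{eq:l2estimation} by a duality argument and the local efficiency \eqref{eq:locefficiencydelta} by the Verf\"urth bubble-function technique, both carefully adapted to the Dirac forcing.

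For \eqref{eq:l2estimation}, I would introduce the dual problem: find $\psi \in H^2(\Omega) \cap H_0^1(\Omega)$ with $-\Delta \psi = z - z_\T$. Since $\Omega$ is a convex polytope, $\|\psi\|_{H^2(\Omega)} \lesssim \|z - z_\T\|_{L^2(\Omega)}$, and the embedding $H^2(\Omega) \hookrightarrow W_0^{1,r'}(\Omega) \cap C(\overline{\Omega})$, valid for $d\in\{2,3\}$, makes $\psi$ an admissible test in \eqref{eq:weak_dirac_eq}. Testing $-\Delta\psi = z-z_\T$ against $z - z_\T$, using \eqref{eq:weak_dirac_eq} to rewrite the $z$ contribution as $\psi(\xi)$, integrating by parts element-wise on the $z_\T$ contribution (exploiting $\Delta z_\T|_T = 0$), and invoking Galerkin orthogonality with a suitable quasi-interpolant $I_\T \psi \in \mathbb{V}(\T)$ gives
\[
\|z - z_\T\|_{L^2(\Omega)}^2 = (\psi - I_\T \psi)(\xi) - \sum_{S \in \Sides} \int_S [\![\nabla z_\T \cdot \nu]\!]\,(\psi - I_\T \psi).
\]
The jump sum is handled by Cauchy--Schwarz and the standard trace-and-interpolation bound $\|\psi - I_\T\psi\|_{L^2(S)} \lesssim h_S^{3/2}\|\psi\|_{H^2(\mathcal{N}_T^*)}$ for any $T\supset S$, which after squaring reproduces the jump part of $\E_\delta$. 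For the point-evaluation $(\psi - I_\T\psi)(\xi)$ I would distinguish two cases: if $\xi$ is a mesh node, a Lagrange-type interpolant makes $(I_\T\psi)(\xi) = \psi(\xi)$ so the term vanishes, giving \eqref{def:basic_estimator2}; if $\xi$ lies in the interior of some element $T$, I apply $|(\psi - I_\T\psi)(\xi)| \leq \|\psi - I_\T\psi\|_{L^\infty(T)} \lesssim h_T^{2-d/2}\|\psi\|_{H^2(\mathcal{N}_T)}$, valid for $d\leq 3$ via Sobolev embedding, which after squaring yields exactly the $h_T^{4-d}\chi(T)$ contribution of \eqref{def:basic_estimator1}. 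The bound $\|\psi\|_{H^2(\Omega)} \lesssim \|z - z_\T\|_{L^2(\Omega)}$ then closes the reliability argument.

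For \eqref{eq:locefficiencydelta}, I would employ bubble-function test functions. Setting $J_S := [\![\nabla z_\T\cdot\nu]\!]|_S$, I would lift $J_S$ polynomially to $\mathcal{N}_S$ and multiply by the standard edge bubble $\psi_S$ to form $v \in H_0^1(\mathcal{N}_S)$. The distributional equation $-\Delta(z-z_\T)=\delta_\xi$ combined with element-wise integration by parts and $\Delta z_\T = 0$ yields
\[
\int_S J_S v = \delta_\xi(v) - (\nabla(z-z_\T),\nabla v)_{L^2(\mathcal{N}_S)}.
\]
When $\xi \notin \mathcal{N}_S$ the Dirac term vanishes; a further distributional integration by parts (legitimate since $v$ vanishes on $\partial\mathcal{N}_S$), the inverse estimate $\|\Delta v\|_{L^2(\mathcal{N}_S)} \lesssim h_S^{-3/2}\|J_S\|_{L^2(S)}$, and the bubble equivalence $\|J_S\|_{L^2(S)}^2 \lesssim \int_S J_S v$ together give $h_S^3\|J_S\|_{L^2(S)}^2 \lesssim \|z-z_\T\|_{L^2(\mathcal{N}_S)}^2$. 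For the Dirac indicator $h_T^{4-d}\chi(T)$ on the element $T\ni\xi$ with $\xi$ not a node, I would instead use a bubble-type test function $\phi$ of unit height at $\xi$ supported in $T$ and observe that $\int_T(z-z_\T)(-\Delta\phi) = \phi(\xi)$; the inverse estimate $\|\Delta\phi\|_{L^2(T)} \lesssim h_T^{d/2-2}$ combined with $\phi(\xi)\gtrsim 1$ delivers $h_T^{2-d/2} \lesssim \|z-z_\T\|_{L^2(T)}$, which on squaring is the desired bound.

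The principal obstacle is the interaction between the pointwise Dirac forcing and the $L^2$ measure of the error: both halves of the argument are forced through an $H^2$-regular dual problem, and $\delta_\xi$ must be controlled by carefully chosen test functions. In the reliability proof, the delicate step is the point-evaluation $(\psi - I_\T\psi)(\xi)$ in the borderline case $d=3$, where the Sobolev embedding $H^2 \hookrightarrow C^0$ only barely holds and the interpolation constant must be tracked; in the efficiency proof, the challenge is preventing the $\delta_\xi(v)$ term from polluting the bubble-function test, which is why the jump sides with $\xi \in \mathcal{N}_S$ and the element containing $\xi$ require a separate, dedicated argument using a bubble placed at $\xi$ rather than the generic element bubble.
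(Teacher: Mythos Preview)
The paper does not prove this result; it simply cites \cite[Theorem~4.1]{ABR2006}. Your reliability sketch is the standard duality argument of that reference and is correct: the convex-polytope $H^2$ bound on the dual $\psi$, Galerkin orthogonality against a Lagrange interpolant, and the case split on whether $\xi$ is a node all go through as you describe.

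The efficiency half has a genuine gap. You build $v$ from the \emph{standard} edge bubble and then claim that $(\nabla(z-z_\T),\nabla v)_{L^2(\mathcal{N}_S)}=-(z-z_\T,\Delta v)_{L^2(\mathcal{N}_S)}$ is ``legitimate since $v$ vanishes on $\partial\mathcal{N}_S$.'' That is not sufficient: $z-z_\T$ does not vanish on $\partial\mathcal{N}_S$, so the boundary term $\int_{\partial\mathcal{N}_S}(z-z_\T)(\nabla v\cdot\nu)$ survives, and the standard bubble is only $C^0$ across $S$, so an additional interior term $\int_S(z-z_\T)[\![\nabla v\cdot\nu]\!]$ appears. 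To pass both derivatives onto $v$ you need $v\in H^2_0(\mathcal{N}_S)$, which forces a higher-order bubble---for instance a squared product of barycentric coordinates, exactly the construction the paper itself introduces later in \eqref{def:bubble_functions}--\eqref{eq:delta_gamma} for the efficiency of $\E_y$. The same caveat applies to your element test function $\phi$ for the $h_T^{4-d}$ indicator: it must lie in $H^2_0(T)$ (e.g.\ $\varphi_T^2$), not merely $H^1_0(T)$, for $\int_T(z-z_\T)(-\Delta\phi)=\phi(\xi)$ to hold. Finally, you treat the jump on $S$ only when $\xi\notin\mathcal{N}_S$; when $\xi$ lies in one of the two elements of $\mathcal{N}_S$ the term $\delta_\xi(v)$ is nonzero and must be bounded by $h_T^{(4-d)/2}$ and then absorbed via the residual-indicator estimate you have already established.
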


\begin{remark}[convexity of $\Omega$]
Assuming convexity is customary when performing an a posteriori error analysis based on duality. Indeed, the convexity of the domain $\Omega$ is imposed so that the associated dual problem exhibits suitable regularity properties which are used to show the reliability of $\E_{\delta}$ in the $L^2$--norm; see the proof of \citep[Theorem 4.1]{ABR2006} and \citep{MR2754849}.
\end{remark}

%%%%%%%%%%%%%%%%%%%%%%%%%%%%%%%%%%%%%%%%%%%%%%%%%%%%%%%%%%%%%%%%%%%%%%%
%%%%%%%%%%%%%%%%%%%%%%%%%%%%%%%%%%%%%%%%%%%%%%%%%%%%%%%%%%%%%%%%%%%%%%%
%%%%%%%%%%%%%%%%%%%%%%%%%%%%%%%%%%%%%%%%%%%%%%%%%%%%%%%%%%%%%%%%%%%%%%%
%%%%%%%%%%%%%%%%%%%%%%%%%%%%%%%%%%%%%%%%%%%%%%%%%%%%%%%%%%%%%%%%%%%%%%%
%%%%%%%%%%%%%%%%%%%%%%%%%%%%%%%%%%%%%%%%%%%%%%%%%%%%%%%%%%%%%%%%%%%%%%%

\subsection{Pointwise a posteriori error estimation}
\label{pointwise-estimator}

Since the variational inequality \eqref{eq:opt_cond}, that characterizes the optimal control, involves the duality pairing between the spaces $C_0(\Omega)$ and $\mathcal{M}(\Omega)$, it is thus imperative to consider a pointwise error estimator for the adjoint problem \eqref{eq:adjoint_pde}.

Pointwise a posteriori error estimates have been studied by several authors in the literature. To the best of our knowledge, the earliest two works that study $L^{\infty}$ residual a posteriori error estimators for a Poisson problem with a bounded forcing term are \cite{eriksson1994,nochetto1995}. The analysis of \cite{nochetto1995} was subsequently extended
to $d=3$ in \cite{DDP1999} and later improved in \cite{DG2012,Demlow2016}. The theory has also been extended to obstacle, monotone semilinear, and geometric problems \cite{camacho2015,Demlow2016,NSV2003,NSSV2006}.
% % 
% % \FF{We mention \citep{eriksson1994,nochetto1995,camacho2015,DDP1999,DG2012,Demlow2016,NSSV2006,NSV2003},
% % where the theory has been developed in several dimensions, and both nonlinear and geometric problems.} 
A standard  requirement, in most of these works, is that the right hand side of the underlying PDE belongs to $L^{\infty}(\Omega)$. However, as it was previously mentioned, the adjoint equation \eqref{eq:adjoint_pde}, for the sparse optimal control problem, has the function $\bar{y} - y_d$ as a forcing term, which, in general is not bounded. For this reason, in what follows we will present an a posteriori error analysis in the maximum norm for a Poisson problem with an unbounded forcing term \citep{AORS2017,camacho2015,Demlow2016}. 

Let $f\in L^2(\Omega)$, and consider the following elliptic boundary value problem:
\begin{equation}\label{eq:pointwise_eq}
z \in H_0^1(\Omega): \quad (\nabla z, \nabla v)_{L^2(\Omega)} 
=
(f,v)_{L^2(\Omega)} \quad \forall v\in H_0^1(\Omega).
\end{equation}
Notice that, since we are in a convex polytope, we conclude that $z \in H^2(\Omega)$ and that this in turn implies, via Sobolev embedding, that $z \in W^{1,t}(\Omega) \cap C^{0,\kappa}(\bar\Omega)$ for some $t>d$ and $\kappa >0$. 
% % \FF{i.e., we obtain the  H\"older continuity of $z$ \cite{JK:95,Dauge,Savare:98,MR2641539}.}
% % 
%\AJS{The following result states the H\"older continuity of $z$ \cite{JK:95,Dauge,Savare:98,MR2641539}.
% % 
%\begin{proposition}[higher integrability]
%\label{prop:zisW1p}
%Let $z \in H^1_0(\Omega)$ denote the unique solution \AJS{to} \eqref{eq:pointwise_eq} with $f \in L^2(\Omega)$. Then, there is $r>d$ such that $z \in W^{1,r}(\Omega)$. In addition,
%\[
%  \| \AJS{z} \|_{W^{1,r}(\Omega)} \lesssim \| f \|_{L^2(\Omega)},
%\]
%where the hidden constant is independent of $z$ and $f$. This, in particular, implies that for $\kappa = 1-d/r>0$ we have $\AJS{z} \in C^{0,\kappa}(\bar\Omega)$ with a similar estimate.
%\end{proposition}
In view of this, it is legitimate to study the a posteriori error estimation in $L^{\infty}(\Omega)$ of problem \eqref{eq:pointwise_eq}.

We begin by defining the Galerkin approximation to problem \eqref{eq:pointwise_eq} as
\begin{equation}\label{eq:pointwise_eq_discrete}
z_\mathscr{T}\in \mathbb{V}(\mathscr{T}): \quad (\nabla z_\mathscr{T}, \nabla v_\mathscr{T})_{L^2(\Omega)} = (f,v_\mathscr{T})_{L^2(\Omega)} \quad \forall v_\mathscr{T}\in \mathbb{V}(\mathscr{T}).
\end{equation}

We thus introduce the following a posteriori local error indicators 
\begin{equation}\label{def:pointwise_estimator}
\E_{\infty}(z_\T;T):= h_T^{2-d/2} \| f \|_{L^2(T)} + 
h_T\|[\![\nabla z_\mathscr{T}\cdot \nu]\!]\|_{L^\infty(\partial T\setminus\partial \Omega)},
\end{equation}
and the error estimator $\E_\infty(z_\T;\T):=\max_{T\in \mathscr{T}}\E_\infty(z_\T;T)$.

In order to present the reliability of the global error estimator $\E_\infty$, we define
\begin{equation}\label{def:log_max}
\ell_\T:= \left|\log\bigg(\max_{T\in\mathscr{T}}\frac{1}{h_T}\bigg)\right|.
\end{equation}

The proof of the next result can be found in \citep[Lemma 4.2]{AORS2017}.

\begin{lemma}[global reliability]
Let $z \in H_0^1(\Omega)\cap L^\infty(\Omega)$ and $z_\mathscr{T}\in\mathbb{V}(\mathscr{T})$ be the solutions of \eqref{eq:pointwise_eq} and \eqref{eq:pointwise_eq_discrete}, respectively. Then 
\begin{equation}\label{eq:pointwise_error_bound}
\| z- z_\mathscr{T}\|_{L^\infty(\Omega)}
\lesssim
\ell_\T \E_\infty(z_\T;\T),
\end{equation}
where the hidden constant is independent of $f$, $z$, $z_\mathscr{T}$, the size of the elements in the mesh $\mathscr{T}$ and $\#\mathscr{T}$.
\end{lemma}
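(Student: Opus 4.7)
My plan is to employ a regularized Green's function duality argument. Since $z \in H_0^1(\Omega)\cap L^\infty(\Omega)$ and $\Omega$ is a convex polytope, elliptic regularity gives $z \in C^{0,\kappa}(\bar\Omega)$, so one can fix a point $x_0 \in \Omega$ at which $|(z - z_\T)(x_0)|$ is arbitrarily close to $\|z - z_\T\|_{L^\infty(\Omega)}$. Let $T_0 \in \T$ be the element containing $x_0$. I would introduce a regularized Dirac mass $\delta_{x_0}^h \in C_c^\infty(T_0)$ with $\|\delta_{x_0}^h\|_{L^1(T_0)} \lesssim 1$ that reproduces linear polynomials at $x_0$, i.e. $(\delta_{x_0}^h, v)_{L^2(T_0)} = v(x_0)$ for every $v \in \mathbb{P}_1(T_0)$; this property is compatible with $z_\T|_{T_0} \in \mathbb{P}_1$. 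Let $G \in H_0^1(\Omega)$ solve $-\Delta G = \delta_{x_0}^h$ in $\Omega$ with homogeneous Dirichlet data, and let $G_\T \in \mathbb{V}(\T)$ denote its Galerkin approximation.

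Combining the reproducing property with the weak formulations of $G$ and of \eqref{eq:pointwise_eq}, together with Galerkin orthogonality for $z - z_\T$, I would obtain, for any $I_\T G \in \mathbb{V}(\T)$,
\[
(z - z_\T)(x_0) = (\nabla(z - z_\T), \nabla(G - I_\T G))_{L^2(\Omega)},
\]
where I take $I_\T G$ to be a Scott--Zhang interpolant of $G$. Elementwise integration by parts, together with the fact that $\Delta z_\T|_T = 0$ for piecewise linear $z_\T$, then rewrites the right-hand side as
\[
(z - z_\T)(x_0) = \sum_{T \in \T} (f, G - I_\T G)_{L^2(T)} - \sum_{S \in \Sides} ([\![\nabla z_\T \cdot \nu]\!], G - I_\T G)_{L^2(S)}.
\]
For the volume residual I would split $\|f\|_{L^2(T)} \|G - I_\T G\|_{L^2(T)}$ as $(h_T^{2-d/2}\|f\|_{L^2(T)}) \cdot (h_T^{d/2}\|D^2 G\|_{L^2(\mathcal{N}_T)})$ after invoking $\|G - I_\T G\|_{L^2(T)} \lesssim h_T^2\|D^2 G\|_{L^2(\mathcal{N}_T)}$, and use Cauchy--Schwarz on each element to pass from $h_T^{d/2}\|D^2 G\|_{L^2(\mathcal{N}_T)}$ to $\|D^2 G\|_{L^1(\mathcal{N}_T)}$. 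The jump residual is handled analogously by pulling out $\max_T h_T\|[\![\nabla z_\T\cdot\nu]\!]\|_{L^\infty(\partial T\setminus\partial\Omega)}$ and using the trace/interpolation bound $\|G - I_\T G\|_{L^1(\partial T)} \lesssim h_T\|D^2 G\|_{L^1(\mathcal{N}_T)}$. Adding both contributions gives
\[
\|z - z_\T\|_{L^\infty(\Omega)} \lesssim \E_\infty(z_\T;\T)\cdot \|D^2 G\|_{L^1(\Omega)}.
\]

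The main obstacle, and what produces the factor $\ell_\T$, is the logarithmically weighted $W^{2,1}$-bound for the regularized Green's function on a convex polytope, $\|D^2 G\|_{L^1(\Omega)} \lesssim \ell_\T$; this is the nontrivial analytic input and is the precise reason convexity is assumed. A secondary technical difficulty lies in the construction of $\delta_{x_0}^h$ with constants independent of the position of $x_0$ within $T_0$, so that the discrepancy $(z - z_\T)(x_0) - (\delta_{x_0}^h, z - z_\T)_{L^2(\Omega)}$ is either zero (thanks to the reproducing property against $z_\T$) or absorbed by taking a suitable near-maximizer $x_0$, rendering the estimate uniform in the mesh.
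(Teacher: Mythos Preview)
The paper does not supply its own proof of this lemma; it simply refers the reader to \cite[Lemma~4.2]{AORS2017}. Your sketch follows precisely the regularized Green's function duality argument that underlies that reference and the earlier works \cite{nochetto1995,DDP1999,DG2012,Demlow2016}: pointwise error representation via a smoothed delta, Galerkin orthogonality, elementwise integration by parts to expose the interior and jump residuals, Scott--Zhang interpolation estimates in $L^1$-type norms, and the key analytic input $\|D^2 G\|_{L^1(\Omega)} \lesssim \ell_\T$ for the regularized Green's function on a convex polytope. This is the correct approach and matches what the cited reference does.

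One small correction: the identity you write as $(z - z_\T)(x_0) = (\nabla(z - z_\T), \nabla(G - I_\T G))_{L^2(\Omega)}$ is not exact; the left-hand side is really $(\delta_{x_0}^h, z - z_\T)_{L^2(\Omega)}$, and the passage to the true pointwise value produces the discrepancy term you flag at the end. You are right that this is a technical point, but ``absorbed by taking a suitable near-maximizer'' is too vague. The clean resolution, used in the references above, is to take $\delta_{x_0}^h \geq 0$ with unit mass, so that $|z(x_0) - (\delta_{x_0}^h, z)| \leq \mathrm{osc}(z;T_0)$, and then bound this oscillation by the local residual term $h_{T_0}^{2-d/2}\|f\|_{L^2(T_0)}$ via the interior H\"older (or $H^2$) regularity of $z$ on $T_0$; alternatively one bounds $\mathrm{osc}(z - z_\T;T_0)$ directly and absorbs the resulting $\|z - z_\T\|_{L^\infty}$ piece into the left-hand side. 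Either way the missing step is short, but it should be stated rather than waved at.
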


To present the local efficiency of the indicators $\E_{\infty}$, we define for any $g\in L^2(\Omega)$, and $\mathscr{M} \subset \mathscr{T}$, 
\begin{equation}\label{def:osc_term}
{\rm{osc}}_\T(g;\mathscr{M})
:=
\left(
\sum_{T\in\mathscr{M}}h_T^{2(2-d/2)}
\|g-\Pi_\T g \|_{L^2(T)}^{2}
\right)^\frac{1}{2},
\end{equation}
where $\Pi_\T$ is the $L^2$--projection operator onto piecewise linear functions over $\T$. 

The local efficiency of the indicators \eqref{def:pointwise_estimator} is as follows. For a proof we refer the reader to \citep[Lemma 4.3]{AORS2017}

\begin{lemma}[local efficiency]
Let $z \in H_0^1(\Omega)\cap L^\infty(\Omega)$ and $z_\mathscr{T}\in \mathbb{V}(\mathscr{T})$ be the solutions to problems \eqref{eq:pointwise_eq} and \eqref{eq:pointwise_eq_discrete}, respectively. Then 
\begin{equation}\label{eq:pointwise_error_efficiency}
\E_{\infty}(z_\T;T)
\lesssim
\| z-z_\mathscr{T}\|_{L^{\infty}(\mathcal{N}_T^*)}
+
{\rm{osc}}_\T(f;\mathcal{N}_T^*)
\end{equation}
for all $T \in \T$, where $\mathcal{N}_T^*$ is given by \eqref{NT_star}, and the hidden constant is independent of $f$, $z$, $z_\mathscr{T}$, the size of the elements in the mesh $\mathscr{T}$ and $\#\mathscr{T}$.
\end{lemma}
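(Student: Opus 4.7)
The strategy is the classical bubble-function technique of Verf\"urth, adapted to yield $L^\infty$ control of the error in place of the usual $H^1$ control. The local estimator splits into a volume contribution $h_T^{2-d/2}\|f\|_{L^2(T)}$ and a jump contribution $h_T\|[\![\nabla z_\mathscr{T}\cdot\nu]\!]\|_{L^\infty(\partial T\setminus \partial\Omega)}$, and I would treat them in that order, using the volume estimate in the proof of the jump estimate.

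For the volume term, let $\varphi_T$ be the standard interior bubble function on $T$. Using the equivalence of norms on the finite dimensional space of polynomials one has $\|\Pi_\T f\|_{L^2(T)}^2 \lesssim (\Pi_\T f,\varphi_T \Pi_\T f)_{L^2(T)}$, which I would split as $(\Pi_\T f - f,\varphi_T \Pi_\T f)_{L^2(T)} + (f,\varphi_T \Pi_\T f)_{L^2(T)}$. The first piece is controlled by Cauchy--Schwarz and produces the oscillation term. For the second, test the PDE with $v = \varphi_T \Pi_\T f$; since $z_\T$ is piecewise linear we have $\Delta z_\T = 0$ on $T$, so $(f,v)_{L^2(T)} = (\nabla(z-z_\T),\nabla v)_{L^2(T)}$. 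Integrating by parts back gives a bound by $\|z-z_\T\|_{L^\infty(T)}\bigl(\|\Delta v\|_{L^1(T)} + \|\nabla v\cdot\nu\|_{L^1(\partial T)}\bigr)$. Inverse inequalities for the polynomial $v$, combined with the scaling $\|\Pi_\T f\|_{L^\infty(T)}\lesssim h_T^{-d/2}\|\Pi_\T f\|_{L^2(T)}$, yield after dividing through by $\|\Pi_\T f\|_{L^2(T)}$ the bound $h_T^{2-d/2}\|\Pi_\T f\|_{L^2(T)} \lesssim \|z-z_\T\|_{L^\infty(T)} + h_T^{2-d/2}\|f-\Pi_\T f\|_{L^2(T)}$. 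A triangle inequality then turns $\Pi_\T f$ back into $f$.

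For the jump term, fix an interior face $S\subset \partial T$ with $\mathcal{N}_S=\{T^+,T^-\}$ and set $j_S := [\![\nabla z_\T \cdot \nu]\!]$, which is constant on $S$. With $\varphi_S$ the face bubble function on $\mathcal{N}_S$ and $v$ the polynomial extension of $j_S\varphi_S$ to $\mathcal{N}_S$ that vanishes on $\partial \mathcal{N}_S$, norm equivalence gives $h_T^{d-1}|j_S|^2 \lesssim \int_S j_S v\, ds$. Element-wise integration by parts and $\Delta z_\T = 0$ produce $\int_S j_S v\, ds = (\nabla z_\T,\nabla v)_{L^2(\mathcal{N}_S)} = (f,v)_{L^2(\mathcal{N}_S)} - (\nabla(z-z_\T),\nabla v)_{L^2(\mathcal{N}_S)}$. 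The first piece is bounded by $\|f\|_{L^2(\mathcal{N}_S)}\|v\|_{L^2(\mathcal{N}_S)} \lesssim \|f\|_{L^2(\mathcal{N}_S)} h_T^{d/2}|j_S|$, and the second, after integration by parts and inverse estimates on $v$, by $h_T^{d-2}|j_S|\|z-z_\T\|_{L^\infty(\mathcal{N}_S)}$. Dividing by $h_T^{d-1}|j_S|$ and multiplying by $h_T$ yields $h_T |j_S| \lesssim h_T^{2-d/2}\|f\|_{L^2(\mathcal{N}_S)} + \|z-z_\T\|_{L^\infty(\mathcal{N}_S)}$, and the first term on the right is controlled by applying Step 1 on each element of $\mathcal{N}_S$, producing the patch $\mathcal{N}_T^*$ and the oscillation. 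Summing over $S \in \Sides_T$ and combining with Step 1 concludes the proof.

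The main obstacle is bookkeeping the powers of $h_T$: the two pieces of the estimator are measured in mixed $L^2$ and $L^\infty$ norms whereas the error is measured in $L^\infty$, so every inverse inequality and every $L^2$-to-$L^\infty$ scaling on polynomials must be tracked so that the exponents consolidate to exactly $2-d/2$ for the volume piece and $1$ for the jump piece. A secondary technicality is that $\nabla\varphi_T\cdot\nu$ does not vanish on $\partial T$, so the integration-by-parts in Step 1 produces a boundary term; this can be absorbed, as it is of the same order as the interior term after the same inverse estimates are applied.
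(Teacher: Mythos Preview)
Your proposal is correct and follows the same bubble-function strategy as the paper (which defers the actual proof to \cite[Lemma~4.3]{AORS2017}, but reproduces essentially the same argument in the proof of Lemma~\ref{efficiency_est_p}). The only notable difference is a cosmetic one in Step~1: the paper's analogue uses the \emph{squared} element bubble $\varphi_T^2$, so that the test function $\zeta_T=(\Pi_\T f)\varphi_T^2$ satisfies $\nabla\zeta_T=0$ on $\partial T$ and the second integration by parts produces no boundary term; you instead use $\varphi_T$, obtain the extra boundary contribution $\int_{\partial T}(z-z_\T)\,\partial_\nu v$, and correctly note that it scales like the interior term and can be absorbed. Both routes lead to the same estimate; the squared bubble simply spares you the bookkeeping you flag as the ``secondary technicality.'' For the jump term your treatment matches the paper's: test with the face bubble, integrate by parts twice element by element (picking up the jump of $\nabla\varphi_S$ across $S$ and the trace on $\partial\mathcal{N}_S$), scale, and then invoke the volume estimate on each $T'\in\mathcal{N}_S$ to replace $h_T^{2-d/2}\|f\|_{L^2(\mathcal{N}_S)}$ by the error plus oscillation on $\mathcal{N}_T^*$.
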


%%%%%%%%%%%%%%%%%%%%%%%%%%%%%%%%%%%%%%%%%%%%%%%%%%%%%%%%%%%%%%%%%%%%%%%
%%%%%%%%%%%%%%%%%%%%%%%%%%%%%%%%%%%%%%%%%%%%%%%%%%%%%%%%%%%%%%%%%%%%%%%
%%%%%%%%%%%%%%%%%%%%%%%%%%%%%%%%%%%%%%%%%%%%%%%%%%%%%%%%%%%%%%%%%%%%%%%
%%%%%%%%%%%%%%%%%%%%%%%%%%%%%%%%%%%%%%%%%%%%%%%%%%%%%%%%%%%%%%%%%%%%%%%
%%%%%%%%%%%%%%%%%%%%%%%%%%%%%%%%%%%%%%%%%%%%%%%%%%%%%%%%%%%%%%%%%%%%%%%

\section{A posteriori error analysis for the sparse optimal control problem}\label{global_estimator}

On the basis of the error indicators and estimators presented in Sections \ref{dirac_estimator} and \ref{pointwise-estimator}, we proceed with the design of an a posteriori error estimator for the sparse optimal control problem \eqref{eq:continuous_opc}--\eqref{eq:elliptic_pde}. The error estimator can be decomposed as the sum of two contributions:
\begin{equation}\label{def:opc_estimator}
\E_{\rm{ocp}}^2(\bar{y}_\T,\bar{p}_\T,\bar{u}_\T;\T):= \E_y^2(\bar{y}_\T,\bar{u}_\T;\T)+\E_p^2(\bar{p}_\T,\bar{y}_\T;\T),
\end{equation}
where $\T \in \mathbb{T}$ and $\bar{y}_\T,\bar{p}_\T$ and $\bar{u}_\T$ denote the discrete optimal variables that solve the discrete optimality system \eqref{eq:discrete_optimality_system}. 

Let us now describe each contribution to \eqref{def:opc_estimator} separately. First, on the basis of the results presented in Section \ref{dirac_estimator}, we define, for $T \in \T$, the local error indicators
\begin{equation}
\label{def:state_indicator}
\E_y^2(\bar{y}_\T,\bar{u}_\T;T):=  
%h_T^2 \chi(T) + 
h_T^{3}\|[\![ \nabla \bar{y}_\mathscr{T}\cdot \nu ]\!]\|_{L^2(\partial T \setminus \partial \Omega)}^2.
\end{equation}
The global error estimator $\E_y(\bar{y}_\T,\bar{u}_\T;\T)$ is thus defined by
\begin{equation}
\label{def:state_estimator}
\E_y(\bar{y}_\T,\bar{u}_\T;\T):=\left(\sum_{T\in \T} \E_y^2(\bar{y}_\T,\bar{u}_\T;T)\right)^{\frac{1}{2}}.
\end{equation}
We immediately notice that, since the optimal control $\bar u_{\T}$ is sought in the discrete space $\mathbb{U}(\T)$, it can be thus written as a linear combination of Dirac measures supported on $\{x_i\}_{i=1}^{N_\mathscr{T}}$. As a consequence, the definition of the local indicators $\E_y(\bar{y}_\T,\bar{u}_\T;T)$ does not involve the additional term $h_T^{4-d} \chi(T)$ that appears in \eqref{def:basic_estimator1}; see \cite[Remark 4.1]{ABR2006}.

The second error contribution in \eqref{def:opc_estimator} is based on the maximum--norm error estimator that we presented in Section \ref{pointwise-estimator}. Locally, it is defined by
\begin{equation}
\label{def:adjoint_indicator}
\E_p(\bar{p}_\T, \bar{y}_\T;T):= h_T^{2-d/2} \| \bar{y}_\T - y_d \|_{L^2(T)} + h_T\|[\![\nabla \bar p_\mathscr{T}\cdot \nu]\!]\|_{L^\infty(\partial T\setminus \partial \Omega)}.
\end{equation}
The global error estimator $\E_p(\bar{p}_\T,\bar{y}_\T;\T)$ is then defined by
\begin{equation}
\label{def:adjoint_estimator}
\E_p(\bar{p}_\T,\bar{y}_\T;\T):= \max_{T\in \T} \E_p(\bar{p}_\T,\bar{y}_\T;T).
\end{equation}

Since, it will be useful in the analysis that we will perform, we introduce the following auxiliary variables. First, let $(\hat{y},\hat{p})\in W_0^{1,r}(\Omega)\times H_0^1(\Omega)$ be such that
\begin{equation}\label{def:hat_variables}
\begin{array}{rcll}
(\nabla \hat{y},\nabla v)_{L^2(\Omega)} & = & \langle \bar{u}_{\mathscr{T}}, v \rangle_\Omega & \forall v\in W_0^{1,r'}(\Omega), \\
(\nabla w,\nabla\hat{p})_{L^2(\Omega)} & = & (\bar{y}_{\mathscr{T}}-y_d, w )_{L^2(\Omega)} & \forall w\in H_0^{1}(\Omega),
\end{array}
\end{equation}
where $1 \leq r < d/(d-1)$ and $r'$ denotes its conjugate exponent. We also define $\tilde{p}\in H_0^1(\Omega)$ to be the solution to
\begin{equation}\label{def:tilde_state}
(\nabla w,\nabla\tilde{p})_{L^2(\Omega)} = (\hat{y}-y_d, w )_{L^2(\Omega)} \quad \forall w\in H_0^{1}(\Omega).
\end{equation}
We notice that $(\bar{y}_\mathscr{T},\bar{p}_\mathscr{T})$ can be understood as a finite element approximation of $(\hat{y},\hat{p})$. Consequently, on the basis of the results presented in Sections \ref{dirac_estimator} and \ref{pointwise-estimator}, the a posteriori error estimators defined in \eqref{def:state_estimator} and \eqref{def:adjoint_estimator} satisfy the following reliability properties:
\begin{align}
\label{eq:state_and_adjoint_bounds}
\|\hat{y}-\bar{y}_\mathscr{T}\|_{L^2(\Omega)}
\lesssim 
\E_y(\bar{y}_\T,\bar{u}_\T;\T),
\qquad
\|\hat{p}-\bar{p}_\mathscr{T}\|_{L^\infty(\Omega)}
\lesssim
\ell_\T \E_p(\bar{p}_\T,\bar{y}_\T;\T) .
\end{align}
We thus have all the ingredients at hand to develop our a posteriori error analysis for the sparse optimal control problem \eqref{eq:continuous_opc}--\eqref{eq:elliptic_pde}.

%%%%%%%%%%%%%%%%%%%%%%%%%%%%%%%%%%%%%%%%%%%%%%%%%%%%%%%%%%%%%%%%%%%%%%%
%%%%%%%%%%%%%%%%%%%%%%%%%%%%%%%%%%%%%%%%%%%%%%%%%%%%%%%%%%%%%%%%%%%%%%%
%%%%%%%%%%%%%%%%%%%%%%%%%%%%%%%%%%%%%%%%%%%%%%%%%%%%%%%%%%%%%%%%%%%%%%%
%%%%%%%%%%%%%%%%%%%%%%%%%%%%%%%%%%%%%%%%%%%%%%%%%%%%%%%%%%%%%%%%%%%%%%%
%%%%%%%%%%%%%%%%%%%%%%%%%%%%%%%%%%%%%%%%%%%%%%%%%%%%%%%%%%%%%%%%%%%%%%%

\subsection{A posteriori error estimator: reliability}\label{reliability}

\begin{theorem}[global reliability] Let $(\bar{y},\bar{p},\bar{u})\in W_0^{1,r}(\Omega) \times H_0^1(\Omega) \times \mathcal{M}(\Omega)$, with $2d/(d+2) \leq r < d/(d-1)$, be the solution to \eqref{eq:optimality_system}, and let $(\bar{y}_\mathscr{T},\bar{p}_\mathscr{T},\bar{u}_\mathscr{T})\in \mathbb{V}(\mathscr{T})\times \mathbb{V}(\mathscr{T})\times \mathbb{U}(\mathscr{T})$ be its numerical approximation obtained as the solution to the discrete optimality system \eqref{eq:discrete_optimality_system}.  Then
\begin{multline}
\|\bar{y}-\bar{y}_\mathscr{T}\|_{L^2(\Omega)}^2 
+ 
\|\bar{p}-\bar{p}_\mathscr{T}\|_{L^\infty(\Omega)}^2
+ 
\|\bar{u}-\bar{u}_\mathscr{T}\|_{H^{-2}(\Omega)}^2  \\
 \lesssim  
\E_y^2(\bar{y}_\T,\bar{u}_\T;\T)+\ell^2_\T\E_p^2(\bar{p}_\T,\bar{y}_\T;\T)+\ell_\T\E_p(\bar{p}_\T,\bar{y}_\T;\T), 
%\\ 
% \lesssim  
%(1+\ell^2_\T)\E_{\rm{ocp}}^2(\bar{y}_\T,\bar{p}_\T,\bar{u}_\T;\T)+\ell_\T\E_p(\bar{p}_\T,\bar{y}_\T;\T), 
\label{eq:estimador_bound}
\end{multline}
where $\ell_\T$ is defined in \eqref{def:log_max}, and the hidden constant is independent of the continuous and discrete optimal variables, the size of the elements of the mesh $\mathscr{T}$ and its cardinality $\# \mathscr{T}$.
\end{theorem}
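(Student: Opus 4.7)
The plan is to introduce the auxiliary variables $\hat{y}$, $\hat{p}$ and $\tilde{p}$ defined in \eqref{def:hat_variables}--\eqref{def:tilde_state} and reduce the continuous--discrete comparison to three ingredients: (i) the reliability of the Poisson estimators on the auxiliary pairs, which by \eqref{eq:state_and_adjoint_bounds} yields $\|\hat{y}-\bar{y}_\T\|_{L^2(\Omega)}\lesssim \E_y$ and $\|\hat{p}-\bar{p}_\T\|_{L^\infty(\Omega)}\lesssim \ell_\T \E_p$; (ii) $H^2$--regularity on a convex polytope together with the embedding $H^2(\Omega)\hookrightarrow L^\infty(\Omega)$ valid for $d\le 3$, which turns an $L^2$ right--hand side into an $L^\infty$ estimate for Poisson's problem; and (iii) the interaction between the continuous and discrete optimality conditions \eqref{eq:opt_cond} and \eqref{eq:discrete_opt_cond}. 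Inserting $u=\bar{u}_\T$ in \eqref{eq:opt_cond} and $u=\bar{u}$ in \eqref{eq:discrete_opt_cond} and adding the resulting inequalities produces the monotonicity estimate
\[
\langle \bar{u}-\bar{u}_\T,\,\bar{p}-\bar{p}_\T\rangle_\Omega \le 0,
\]
which allows the elimination of an otherwise uncontrolled term.

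Next, I would establish the key identity
\[
\|\bar{y}-\hat{y}\|_{L^2(\Omega)}^2=\langle \bar{u}-\bar{u}_\T,\,\bar{p}-\tilde{p}\rangle_\Omega.
\]
Its derivation proceeds by testing the state--equation difference $(\nabla(\bar{y}-\hat{y}),\nabla v)_{L^2(\Omega)}=\langle \bar{u}-\bar{u}_\T,v\rangle_\Omega$ with $v=\bar{p}-\tilde{p}\in H^2(\Omega)\cap H_0^1(\Omega)\hookrightarrow W_0^{1,r'}(\Omega)$, and then approximating $\bar{y}-\hat{y}\in W_0^{1,r}(\Omega)$ by a sequence $w_n\in H_0^1(\Omega)$ converging in $W_0^{1,r}(\Omega)\cap L^2(\Omega)$ so that it may legally be inserted into the adjoint equation satisfied by $\bar{p}-\tilde{p}$; passing to the limit produces the identity. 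Using the decomposition
\[
\bar{p}-\tilde{p}=(\bar{p}-\bar{p}_\T)+(\bar{p}_\T-\hat{p})+(\hat{p}-\tilde{p})
\]
and the monotonicity estimate to drop the first contribution, I bound the second by $\|\bar{u}-\bar{u}_\T\|_{\mathcal{M}(\Omega)}\|\bar{p}_\T-\hat{p}\|_{L^\infty(\Omega)}\lesssim \ell_\T \E_p$, where the uniform bound on $\|\bar{u}\|_{\mathcal{M}(\Omega)}+\|\bar{u}_\T\|_{\mathcal{M}(\Omega)}$ follows by testing the cost with the admissible pair $(0,0)$. The third contribution is rewritten, via the same duality trick applied to the equation for $\hat{p}-\tilde{p}$, as $(\bar{y}_\T-\hat{y},\,\bar{y}-\hat{y})_{L^2(\Omega)}\le \E_y\|\bar{y}-\hat{y}\|_{L^2(\Omega)}$. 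A Young inequality then absorbs $\|\bar{y}-\hat{y}\|_{L^2(\Omega)}$ into the left--hand side, yielding $\|\bar{y}-\hat{y}\|_{L^2(\Omega)}^2\lesssim \E_y^2+\ell_\T\E_p$.

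Finally, I assemble the three individual bounds. The triangle inequality combined with \eqref{eq:state_and_adjoint_bounds} delivers $\|\bar{y}-\bar{y}_\T\|_{L^2(\Omega)}^2\lesssim \E_y^2+\ell_\T\E_p$. Since $\bar{p}-\hat{p}$ solves Poisson's problem with source $\bar{y}-\bar{y}_\T\in L^2(\Omega)$, $H^2$--regularity and $H^2(\Omega)\hookrightarrow L^\infty(\Omega)$ give $\|\bar{p}-\hat{p}\|_{L^\infty(\Omega)}\lesssim \|\bar{y}-\bar{y}_\T\|_{L^2(\Omega)}$, whence the estimate for $\|\bar{p}-\bar{p}_\T\|_{L^\infty(\Omega)}$ follows by the triangle inequality and \eqref{eq:state_and_adjoint_bounds}. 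For the control, the distributional identity $-\Delta(\bar{y}-\hat{y})=\bar{u}-\bar{u}_\T$ and the continuity of $-\Delta:L^2(\Omega)\to H^{-2}(\Omega)$ (dualizing against $H_0^2(\Omega)$) give $\|\bar{u}-\bar{u}_\T\|_{H^{-2}(\Omega)}\lesssim \|\bar{y}-\hat{y}\|_{L^2(\Omega)}$. Summing the three squared bounds produces the right--hand side of \eqref{eq:estimador_bound}.

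The principal obstacle is the rigorous justification of the key identity: because the control is only a measure, $\bar{y}-\hat{y}\notin H_0^1(\Omega)$ in general, so direct testing inside the adjoint formulation is not permitted, and one has to argue through density in $W_0^{1,r}(\Omega)$ relying on the improved integrability of $\nabla(\bar{p}-\tilde{p})$ supplied by $H^2$--regularity on the convex polytope. Carefully tracking the logarithmic factor $\ell_\T$ appearing in the maximum--norm estimate, which is responsible for the asymmetric right--hand side $\E_y^2+\ell_\T^2\E_p^2+\ell_\T\E_p$ rather than a clean sum of squares, is the secondary delicate point.
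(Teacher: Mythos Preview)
Your proposal is correct and follows essentially the same strategy as the paper: the same auxiliary variables $\hat y,\hat p,\tilde p$, the same combination of the two variational inequalities to obtain $\langle \bar u-\bar u_\T,\bar p-\bar p_\T\rangle_\Omega\le 0$, the same duality identity $\|\bar y-\hat y\|_{L^2(\Omega)}^2=\langle \bar u-\bar u_\T,\bar p-\tilde p\rangle_\Omega$ justified by density, and the same $H^2\hookrightarrow L^\infty$ argument for the adjoint error. The only cosmetic differences are that you absorb $\|\bar y-\hat y\|_{L^2(\Omega)}$ directly via Young (the paper instead splits $(\hat y-\bar y_\T,\hat y-\bar y)$ once more and absorbs $\tfrac14\|\bar y-\bar y_\T\|_{L^2(\Omega)}^2$ through the outer triangle inequality), and that you obtain the $H^{-2}$ control bound by the direct distributional argument $\|\bar u-\bar u_\T\|_{H^{-2}(\Omega)}\lesssim\|\bar y-\hat y\|_{L^2(\Omega)}$, whereas the paper cites \cite[Corollary~4.5]{PV2013} for the equivalent bound $\|\bar u-\bar u_\T\|_{H^{-2}(\Omega)}^2\lesssim\|\bar y-\bar y_\T\|_{L^2(\Omega)}^2+\|\bar y_\T-\hat y\|_{L^2(\Omega)}^2$.
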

\begin{proof}
We proceed in four steps.

\noindent \underline{\emph{Step 1}}. The objective of this step is to bound the error $\|\bar{y}-\bar{y}_\mathscr{T}\|_{L^2(\Omega)}$. To accomplish this task, we invoke the auxiliary state variable $\hat{y}$, defined as the solution to \eqref{def:hat_variables}, and apply the triangle inequality to arrive at the estimate
\begin{equation}
\label{eq:state_estimate_1}
\|\bar{y}-\bar{y}_\mathscr{T}\|_{L^2(\Omega)}^2
\lesssim
\|\bar{y}-\hat{y}\|_{L^2(\Omega)}^2 + \E_y^2(\bar{y}_\T,\bar{u}_\T;\T),
\end{equation}
where we have also used \eqref{eq:state_and_adjoint_bounds}. We now focus on controlling the term $\|\bar{y}-\hat{y}\|_{L^2(\Omega)}^2$. Set 
$u=\bar{u}_\mathscr{T}$ in \eqref{eq:opt_cond} and $u=\bar{u}$ in \eqref{eq:discrete_opt_cond}, and obtain that
\begin{align*}
-\langle \bar{u}_\mathscr{T} - \bar{u}, \bar{p}\rangle_\Omega + \alpha \|\bar{u}\|_{\mathcal{M}(\Omega)}
\leq & \;
\alpha \| \bar{u}_\mathscr{T} \|_{\mathcal{M}(\Omega)}, \\
-\langle \bar{u} - \bar{u}_\mathscr{T}, \bar{p}_\mathscr{T}\rangle_\Omega + \alpha \|\bar{u}_\mathscr{T}\|_{\mathcal{M}(\Omega)}
\leq & \;
\alpha \| \bar{u} \|_{\mathcal{M}(\Omega)}.
\end{align*}
Adding the previous inequalities, we thus obtain that
\begin{equation}\label{eq:equation_1}
\langle \bar{u}-\bar{u}_\mathscr{T},\bar{p}-\tilde{p}\rangle_\Omega + \langle \bar{u}-\bar{u}_\mathscr{T},\tilde{p} - \bar{p}_\mathscr{T}\rangle_\Omega \leq 0,
\end{equation}
where we have invoked the auxiliary adjoint state $\tilde{p}$, defined in \eqref{def:tilde_state}.

Let us concentrate now on the term $\langle \bar{u}-\bar{u}_\mathscr{T},\bar{p}-\tilde{p}\rangle_\Omega$. First, we notice that the functions $\bar{y}-\hat{y}$ and $\bar{p}-\tilde{p}$ satisfy
\begin{align}
\label{eq:equation_2}
\bar{y}-\hat{y} \in W_0^{1,r}(\Omega): \quad (\nabla (\bar{y}-\hat{y}),\nabla v)_{L^2(\Omega)} =  \langle\bar{u}-\bar{u}_\mathscr{T},v\rangle_\Omega  \quad  \forall v\in W_0^{1,r'}(\Omega),\\
\label{eq:equation_22}
\bar{p}-\tilde{p} \in H_0^1(\Omega): \quad
(\nabla w,\nabla (\bar{p}-\tilde{p}))_{L^2(\Omega)} =  (\bar{y}-\hat{y},w)_{L^2(\Omega)} \quad \forall w\in H_0^1(\Omega),
\end{align}
where $2d/(d+2) \leq r < d/(d-1)$ and $r'>d$ is its conjugate exponent. Notice now that, since $\bar{y}-\hat{y} \in L^2(\Omega)$, there exists $r'>d$ for which $\bar{p}-\tilde{p} \in W^{1,r'}(\Omega)$. Consequently, we are able to set $v =\bar{p}-\tilde{p}$ in \eqref{eq:equation_2}. This yields
\[
(\nabla (\bar{y}-\hat{y}),\nabla (\bar{p}-\tilde{p}))_{L^2(\Omega)} =  \langle\bar{u}-\bar{u}_\mathscr{T},\bar{p}-\tilde{p}\rangle_\Omega.
\]
Following a similar reasoning, we would like to set $w = \bar{y}-\hat{y}$ in \eqref{eq:equation_22}. However, $ \bar{y}-\hat{y} \notin H_0^1(\Omega)$. Exploiting the fact that $\bar{y}-\hat{y} \in W_0^{1,r}(\Omega)$ with $2d/(d+2) \leq r < d/(d-1)$ and that there exits $r' > d$ such that $\bar{p}-\tilde{p} \in W^{1,r'}(\Omega)$, a density argument allows us to conclude that
\[
(\nabla ( \bar{y}-\hat{y}),\nabla (\bar{p}-\tilde{p}))_{L^2(\Omega)}  =  (\bar{y}-\hat{y}, \bar{y}-\hat{y})_{L^2(\Omega)},
\]
and thus that
\begin{equation}
\label{eq:duality}
\langle \bar{u}-\bar{u}_\mathscr{T},\bar{p}-\tilde{p}\rangle_\Omega 
= 
\|\bar{y}-\hat{y}\|_{L^2(\Omega)}^2.
\end{equation}
Replacing the previous term in \eqref{eq:equation_1}, and inserting the auxiliary variable $\hat{p}$ defined as the solution to \eqref{def:hat_variables}, we obtain that
\begin{equation}\label{eq:equation_3}
\|\bar{y}-\hat{y}\|_{L^2(\Omega)}^2 
\leq
\langle \bar{u}_\mathscr{T} - \bar{u},\tilde{p} - \hat{p}\rangle_\Omega
+
\langle \bar{u}_\mathscr{T} - \bar{u},\hat{p} - \bar{p}_\mathscr{T}
\rangle_\Omega.
\end{equation}

We now apply Theorem \ref{discrete_existence_sol} to conclude that the norms $\|\bar{u}\|_{\mathcal{M}(\Omega)}$ and $\|\bar{u}_\mathscr{T}\|_{\mathcal{M}(\Omega)}$ are bounded. This, combined with the estimate \eqref{eq:state_and_adjoint_bounds}, reveals that
\begin{equation}\label{eq:equation_4}
\langle \bar{u}_\mathscr{T} - \bar{u},\hat{p} - \bar{p}_\mathscr{T}
\rangle_\Omega 
\lesssim \ell_\T \E_p(\bar{p}_\T,\bar{y}_\T;\T).
\end{equation}

To estimate the first term on the right hand side of \eqref{eq:equation_3}, we notice that $\hat{y}-\bar{y}$ and $\tilde{p}-\hat{p}$ satisfy
\begin{align}
\label{eq:equation_middle}
\hat{y}-\bar{y} \in W_0^{1,r}(\Omega): \quad (\nabla (\hat{y}-\bar{y}),\nabla v)_{L^2(\Omega)} =  \langle\bar{u}_\mathscr{T}-\bar{u},v\rangle_\Omega \quad  \forall v\in W_0^{1,r'}(\Omega),\\
\label{eq:equation_middle2}
\tilde{p}-\hat{p} \in H_0^1(\Omega): \quad
(\nabla w,\nabla (\tilde{p}-\hat{p}))_{L^2(\Omega)} =  (\hat{y}-\bar{y}_\mathscr{T},w)_{L^2(\Omega)} \quad  \forall w\in H_0^1(\Omega).
\end{align}
Similar density arguments to those used to obtain \eqref{eq:duality}, allow us to set $v=\tilde{p}-\hat{p}$ and $w=\hat{y}-\bar{y}$. This yields
\begin{equation}\label{eq:equation_5}
\langle \bar{u}_\mathscr{T} - \bar{u},\tilde{p} - \hat{p}\rangle_\Omega = (\hat{y}-\bar{y}_\mathscr{T},\hat{y}-\bar{y}).
\end{equation}
Replacing \eqref{eq:equation_4} and the obtained result into \eqref{eq:equation_3} we thus obtain that 
\begin{equation}\label{eq:equation_6}
\|\bar{y}-\hat{y}\|_{L^2(\Omega)}^2
\lesssim
\|\hat{y}-\bar{y}_\mathscr{T} \|_{L^2(\Omega)}^2
+
(\hat{y}-\bar{y}_\mathscr{T},\bar{y}_\mathscr{T}-\bar{y})_{L^2(\Omega)} + \ell_\T \E_p(\bar{p}_\T,\bar{y}_\T;\T).
\end{equation}
Now, on the basis of \eqref{eq:state_and_adjoint_bounds}, an application of Young's inequality allows us to arrive at the estimate
\begin{equation}\label{eq:state_estimate_2}
\|\bar{y}-\hat{y}\|_{L^2(\Omega)}^2
\lesssim \E_y^2(\bar{y}_\T,\bar{u}_\T;\T)+ \ell_\T \E_p(\bar{p}_\T,\bar{y}_\T;\T) +\frac{1}{4}\|\bar{y}-\bar{y}_\mathscr{T}\|_{L^2(\Omega)}^2.
\end{equation}
Finally, replacing \eqref{eq:state_estimate_2} into \eqref{eq:state_estimate_1}, we obtain the estimate
\begin{equation}\label{eq:state_estimate_3}
\|\bar{y}-\bar{y}_\mathscr{T}\|_{L^2(\Omega)}^2
\lesssim
\E_y^2(\bar{y}_\T,\bar{u}_\T;\T) + \ell_\T \E_p(\bar{p}_\T,\bar{y}_\T;\T).
\end{equation}

\noindent \underline{\emph{Step 2.}} The goal of this step is to estimate the term $\|\bar{u}-\bar{u}_\mathscr{T}\|_{H^{-2}(\Omega)}$. To accomplish this task, we follow the arguments elaborated in \cite[Corollary 4.5]{PV2013} that guarantee 
\begin{equation}\label{eq:control_estimate_1}
\|\bar{u}-\bar{u}_\mathscr{T}\|_{H^{-2}(\Omega)}^2
\lesssim
\| \bar{y}-\bar{y}_\mathscr{T}\|_{L^2(\Omega)}^2
+
\| \bar{y}_\mathscr{T} - \hat{y} \|_{L^2(\Omega)}^2,
\end{equation}
where $\hat y$ is defined as the solution to \eqref{def:hat_variables}. We now invoke, once again, the estimate \eqref{eq:state_and_adjoint_bounds} together with \eqref{eq:state_estimate_3} to conclude that
\begin{equation}\label{eq:control_estimate_2}
\|\bar{u}-\bar{u}_\mathscr{T}\|_{H^{-2}(\Omega)}^2
\lesssim
\E_y^2(\bar{y}_\T,\bar{u}_\T;\T) + \ell_\T \E_p(\bar{p}_\T,\bar{y}_\T;\T).
\end{equation}

\noindent \underline{\emph{Step 3.}} In this step we bound the error $\|\bar{p}-\bar{p}_\mathscr{T}\|_{L^\infty(\Omega)}$.
To accomplish this task, we use the triangle inequality and then the estimate \eqref{eq:state_and_adjoint_bounds} to obtain that
\begin{equation}\label{eq:adjoint_estimate_1}
\|\bar{p}-\bar{p}_\mathscr{T}\|_{L^\infty(\Omega)}^2 \lesssim \|\bar{p}-\hat{p}\|_{L^\infty(\Omega)}^2 +
\ell_\T^2 \E_p^2(\bar{p}_\T,\bar{y}_\T;\T).
\end{equation}
To estimate the term $\|\bar{p}-\hat{p}\|_{L^\infty(\Omega)}^2$, we 
observe that $\bar{p}-\hat{p} \in H^2(\Omega)\hookrightarrow C(\bar{\Omega})$ so that this, together with \eqref{eq:state_estimate_3} gives
% utilize the results of Proposition \ref{prop:zisW1p}, the continuous embedding, $W^{1,r}(\Omega)\hookrightarrow C(\bar{\Omega})$ with $r>d$, an stability estimate for the problem that $\bar{p}-\hat{p}$ solves,  and the estimate \eqref{eq:state_estimate_3}. These arguments reveal that
\begin{equation}\label{eq:adjoint_estimate_2}
\|\bar{p}-\hat{p}\|_{L^\infty(\Omega)}^2
\lesssim
\| \bar{y}-\bar{y}_\mathscr{T}\|_{L^2(\Omega)}^2
\lesssim
\E_y^2(\bar{y}_\T,\bar{u}_\T;\T) + \ell_\T \E_p(\bar{p}_\T,\bar{y}_\T;\T). 
\end{equation}
Finally, inserting \eqref{eq:adjoint_estimate_2} into \eqref{eq:adjoint_estimate_1}, we arrive at the estimate
\begin{equation}\label{eq:adjoint_estimate_3}
\|\bar{p}-\bar{p}_\mathscr{T}\|_{L^\infty(\Omega)}^2
\lesssim
\E_y^2(\bar{y}_\T,\bar{u}_\T;\T) + \ell^2_\T \E_p^2(\bar{p}_\T,\bar{y}_\T;\T)+\ell_\T \E_p(\bar{p}_\T,\bar{y}_\T;\T).
\end{equation}

\noindent \underline{\emph{Step 4.}} The desired estimate \eqref{eq:estimador_bound} follows upon gathering the estimates \eqref{eq:state_estimate_3}, \eqref{eq:control_estimate_2} and \eqref{eq:adjoint_estimate_3}.
\end{proof}

%%%%%%%%%%%%%%%%%%%%%%%%%%%%%%%%%%%%%%%%%%%%%%%%%%%%%%%%%%%%%%%%%%%%%%%%%%%
%%%%%%%%%%%%%%%%%%%%%%%%%%%%%%%%%%%%%%%%%%%%%%%%%%%%%%%%%%%%%%%%%%%%%%%%%%%
%%%%%%%%%%%%%%%%%%%%%%%%%%%%%%%%%%%%%%%%%%%%%%%%%%%%%%%%%%%%%%%%%%%%%%%%%%
%%%%%%%%%%%%%%%%%%%%%%%%%%%%%%%%%%%%%%%%%%%%%%%%%%%%%%%%%%%%%%%%%%%%%%%%%%%
%%%%%%%%%%%%%%%%%%%%%%%%%%%%%%%%%%%%%%%%%%%%%%%%%%%%%%%%%%%%%%%%%%%%%%%%%%%
%%%%%%%%%%%%%%%%%%%%%%%%%%%%%%%%%%%%%%%%%%%%%%%%%%%%%%%%%%%%%%%%%%%%%%%%%%

\subsection{A posteriori error estimator: efficiency}\label{efficiency}
In this section we analyze the efficiency properties of the local a posteriori error indicator 
\begin{equation}\label{def:opc_estimator_local}
\E_{\rm{ocp}}^2(\bar{y}_\T,\bar{p}_\T,\bar{u}_\T;T)= \E_y^2(\bar{y}_\T,\bar{u}_\T;T)+\E_p^2(\bar{p}_\T,\bar{y}_\T;T).
\end{equation}
To accomplish this task, we study each of its contributions separately. We start with the indicator $\E_y(\bar{y}_\T,\bar{u}_\T; T)$ defined by \eqref{def:state_indicator}. Before embarking ourselves with the efficiency analysis of $\E_y(\bar{y}_\T,\bar{u}_\T; T)$, we introduce the following notation: for an edge, triangle or tetrahedron $G$, let $\mathcal{V}(G)$ be the set of vertices of $G$.

Let $T \in \T$ and $S \in \Sides_T$. Recall that $\mathcal{N}_{S}$ denotes the patch composed by the two elements $T$ and $T'$ sharing $S$. We introduce the following edge bubble function 
\begin{equation}\label{def:bubble_functions}
\psi_{S}|_{\mathcal{N}_{S}}=d^{4d}
\left(\prod_{\texttt{v}\in\mathcal{V}(S)} \phi_{\texttt{v}}^{T} \phi_{\texttt{v}}^{T'}\right)^{2},
\end{equation}
where, for $\texttt{v} \in \mathcal{V}(S)$, $\phi_{\texttt{v}}^T$ and $\phi_{\texttt{v}}^{T'}$ denote the barycentric coordinates of $T$ and $T'$, respectively, which are understood as functions over $\mathcal{N}_{S}$. The following properties of the bubble function $\psi_S$ follow immediately: $\psi_{S} \in \mathbb{P}_{4d}(\mathcal{N}_{S})$, $\psi_{S} \in C^2(\mathcal{N}_{S})$, and $\psi_{S} = 0$ on $\partial \mathcal{N}_{S}$. In addition, we have that
\begin{equation}
\label{eq:delta_gamma}
 \nabla \psi_{S} = 0 \textrm{ on } \partial \mathcal{N}_{S}, \quad 
 [\![\nabla \psi_S\cdot\nu]\!]= 0 \textrm{ on } S.
\end{equation}
With all these ingredients at hand, we are ready to prove the local efficiency of $\E_y^2(\bar{y}_\T,\bar{u}_\T; T)$.

\begin{lemma}[local efficiency of $\E_y$]\label{efficiency_est_y}
Let $(\bar{y},\bar{p},\bar{u})\in W_0^{1,r}(\Omega) \times H_0^1(\Omega)\times \mathcal{M}(\Omega)$, with $2d/(d+2)\leq r < d/(d-1)$, be the solution to \eqref{eq:optimality_system}, and let $(\bar{y}_\mathscr{T},\bar{p}_\mathscr{T},\bar{u}_\mathscr{T})\in \mathbb{V}(\mathscr{T})\times \mathbb{V}(\mathscr{T})\times \mathbb{U}(\mathscr{T})$ be its numerical approximation obtained as the solution to the discrete optimality system \eqref{eq:discrete_optimality_system}. Then, for $T\in\mathscr{T}$, the local error indicator $\E_y$, defined as in \eqref{def:state_indicator}, satisfies that
\begin{equation}\label{eq:state_efficiency}
\E_y^2(\bar{y}_\T,\bar{u}_\T; T)
\lesssim
\|\bar{y}-\bar{y}_\mathscr{T}\|_{L^2(\mathcal{N}_T^*)}^2
+
\sum_{S \in \Sides_T} \|\bar{u}-\bar{u}_\mathscr{T}\|_{H^{-2}(\mathcal{N}_S)}^2,
\end{equation}
where $\mathcal{N}_T^*$ is defined as in \eqref{NT_star} and the hidden constant is independent of the continuous and discrete optimal variables,
%%, their approximations, 
the size of the elements in the mesh $\mathscr{T}$, and $\#\mathscr{T}$.
\end{lemma}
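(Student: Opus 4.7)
My plan is to perform a Verf\"urth--type residual/bubble--function argument tailored to the measure--valued data. Since $\bar y_\T$ is piecewise linear, $\Delta\bar y_\T$ vanishes inside each element, and on every interior side $S\in\Sides_T$ the jump $J_S:=[\![\nabla\bar y_\T\cdot\nu]\!]$ is a single constant. The task thus reduces to bounding each $|J_S|$ by the local error on the patch $\mathcal{N}_S$; summation over $S\in\Sides_T$, together with $\bigcup_{S\in\Sides_T}\mathcal{N}_S\subset\mathcal{N}_T^*$, then yields \eqref{eq:state_efficiency}.

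The central device is the test function $v_S:=J_S\,\psi_S$, with $\psi_S$ as in \eqref{def:bubble_functions}. Two structural properties of $v_S$ drive the proof. First, thanks to the \emph{squared} barycentric factors, $\psi_S$ and $\nabla\psi_S$ vanish on $\partial\mathcal{N}_S$ (cf.~\eqref{eq:delta_gamma}), so $v_S\in H^2_0(\mathcal{N}_S)$; this is what will license the use of the $H^{-2}$--norm on the control error. Second, $\psi_S$ vanishes at every vertex of $\mathcal{N}_S$, hence $\langle\bar u_\T,v_S\rangle_\Omega=\sum_i u_i\,v_S(x_i)=0$ because $\bar u_\T\in\mathbb{U}(\T)$ is supported on mesh vertices. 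Integrating by parts elementwise (using $\Delta\bar y_\T|_T=0$) and inserting the state equation \eqref{eq:weak_pde} tested at the smooth, compactly supported $v_S$ produce the identity
\begin{equation*}
J_S^{2}\int_S\psi_S=(\nabla(\bar y_\T-\bar y),\nabla v_S)_{L^2(\mathcal{N}_S)}+\langle\bar u-\bar u_\T,v_S\rangle_\Omega.
\end{equation*}

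To close the bound I would push both derivatives in the first term onto $v_S$ --- justified by a density argument analogous to the one used in the reliability proof to obtain \eqref{eq:duality}, since $\bar y-\bar y_\T\in W_0^{1,r}(\Omega)$ and $v_S$ is a smooth compactly supported polynomial --- controlling it by $\|\bar y-\bar y_\T\|_{L^2(\mathcal{N}_S)}\|\Delta v_S\|_{L^2(\mathcal{N}_S)}$, and I would bound the second term via $H^{-2}$--$H^2_0$ duality by $\|\bar u-\bar u_\T\|_{H^{-2}(\mathcal{N}_S)}\|v_S\|_{H^2(\mathcal{N}_S)}$. The standard scaling estimates on the polynomial bubble, $\|v_S\|_{L^2(\mathcal{N}_S)}\lesssim|J_S|h_T^{d/2}$ and $\|v_S\|_{H^2(\mathcal{N}_S)}\lesssim|J_S|h_T^{d/2-2}$, together with the classical lower bound $\int_S\psi_S\gtrsim h_T^{d-1}$, then yield after a short bookkeeping of powers of $h_T$ and division by $|J_S|$ the per--side estimate $h_T^{3}\|J_S\|_{L^2(S)}^2\lesssim\|\bar y-\bar y_\T\|_{L^2(\mathcal{N}_S)}^2+\|\bar u-\bar u_\T\|_{H^{-2}(\mathcal{N}_S)}^2$, and summation over $S\in\Sides_T$ finishes the proof. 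The main obstacle is the compatibility of the two structural requirements on $v_S$: the $H^2_0$--regularity needed for the $H^{-2}$--duality, and vanishing at every patch vertex needed to annihilate $\bar u_\T$; it is exactly this twin demand that forces the fourth--power bubble in \eqref{def:bubble_functions} in place of the classical quadratic one.
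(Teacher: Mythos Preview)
Your proposal is correct and follows essentially the same route as the paper: the same fourth--power bubble $\psi_S$, the same test function $v_S=J_S\psi_S=\beta_S$, the same two structural observations ($v_S\in H^2_0(\mathcal{N}_S)$ for the $H^{-2}$--duality, and $\psi_S(x_i)=0$ so that $\langle\bar u_\T,v_S\rangle=0$), and the same scaling to close the estimate. The only cosmetic difference is that the paper first writes a global error equation \eqref{eq:state_ibp_3} for arbitrary piecewise--$C^2$ test functions and then specializes to $\beta_S$, whereas you go straight to $v_S$; the resulting identities and bounds coincide.
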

\begin{proof} Let $v \in W_0^{1,r'}(\Omega)$, with $r'>d$, be such that $v|_T\in C^2(T)$ for all $T\in \mathscr{T}$. Consider $v$ as a test function in the state equation \eqref{eq:weak_pde} and apply
%%Invoking the state equation \eqref{eq:weak_pde}, 
integration by parts to arrive at
\begin{equation}\label{eq:state_ibp_1}
\int_\Omega \nabla (\bar{y} - \bar{y}_\T) \nabla v = \sum_{T\in \T} \langle \bar{u}, v \rangle_T  + \sum_{S\in\Sides}\int_S [\![\nabla \bar{y}_\mathscr{T}\cdot \nu ]\!] v.
\end{equation}
Since we have that $v\in C^2(T)$ on each $T\in \T$, we can integrate by parts, again, to conclude that
\begin{equation}\label{eq:state_ibp_2}
\int_\Omega \nabla (\bar{y} - \bar{y}_\T) \nabla v =- \sum_{S\in\Sides}\int_S [\![\nabla v\cdot \nu ]\!] (\bar{y}-\bar{y}_\mathscr{T}) - \sum_{T\in \T}\int_T(\bar{y}-\bar{y}_\mathscr{T}) \Delta v.
\end{equation}
As a conclusion, from \eqref{eq:state_ibp_1} and \eqref{eq:state_ibp_2}, we arrive at the following identity:
\begin{multline}
\label{eq:state_ibp_3}
\sum_{T\in \T} \left(\langle \bar{u}-\bar{u}_\T, v \rangle_T + \langle \bar{u}_\T, v \rangle_T \right) + \sum_{S\in\Sides}\int_S [\![\nabla \bar{y}_\mathscr{T}\cdot \nu ]\!] v
\\ 
=  - \sum_{S\in\Sides}\int_S [\![\nabla v\cdot \nu ]\!] (\bar{y}-\bar{y}_\mathscr{T}) - \sum_{T\in \T}\int_T(\bar{y}-\bar{y}_\mathscr{T}) \Delta v,  
\end{multline}
which holds for every $v\in W_0^{1,r'}(\Omega)$, with $r'>d$, and is such that $v|_T\in C^2(T)$ for all $T\in\T$.

Let $S\in \Sides$, and set $v=\beta_S=[\![\nabla \bar{y}_\mathscr{T}\cdot\nu]\!]\psi_S$ in \eqref{eq:state_ibp_3}. Notice that we have that $\beta_S \in H^2_0(\mathcal{N}_S)$ and, moreover, $[\![\nabla \beta_S \cdot \nu]\!] = 0$ on $S$. In addition, since
% , with this election of $v$, which is such that $v \in C(\bar \Omega)$ and $v = 0$ on $\partial \mathcal{N}_S$, since 
$\bar{u}_\mathscr{T} \in \mathbb{U}(\T)$, we can conclude for every $T\in \mathcal{N}_S$ that
\[
 \langle \bar{u}_\T, \beta_S \rangle_T = \sum_{\texttt{v} \in \mathcal{V}(T)} u_\texttt{v}  \langle \delta_{\texttt{v}}, \beta_S \rangle_T = 0.
\]
Consequently, a simple application of the Cauchy--Schwarz inequality reveals that
\begin{align}\label{eq:estimate_1}
& \int_S [\![\nabla \bar{y}_\mathscr{T}\cdot\nu]\!]^2\psi_S
 =
-\sum_{T'\in \mathcal{N}_S} 
\bigg( \int_{T'}(\bar{y}-\bar{y}_\mathscr{T})\Delta \beta_S + \langle \bar{u}-\bar{u}_\mathscr{T},\beta_S\rangle_{T'}\bigg) \\ \nonumber
& \leq \sum_{T'\in \mathcal{N}_S} \bigg( \| \bar{y} - \bar{y}_\mathscr{T} \|_{L^2(T')} \| \Delta \beta_S \|_{L^2(T')} \bigg) 
+
\| \bar{u} - \bar{u}_\mathscr{T} \|_{H^{-2}(\mathcal{N}_S)} \| \beta_S \|_{H^{2}(\mathcal{N}_S)} . \nonumber
\end{align}

Now, since $[\![\nabla \bar{y}_\mathscr{T}\cdot \nu]\!]\in \mathbb{R}$, we have that $\Delta \beta_S = [\![\nabla \bar{y}_\mathscr{T}\cdot\nu]\!] \Delta\psi_S$. Standard arguments allow us to obtain the following bound 
\begin{align*}
\| \Delta \beta_S \|_{L^2(T)}^2 & \lesssim [\![\nabla \bar{y}_\mathscr{T}\cdot\nu]\!]^2  \| \Delta\psi_S \|_{L^2(T)}^2
\\
& \lesssim |T| |S|^{-1} h_T^{-2}  [\![\nabla \bar{y}_\mathscr{T}\cdot\nu]\!] \|_{L^2(S)}^2
\lesssim
h_T^{-3}  \|[\![\nabla \bar{y}_\mathscr{T}\cdot\nu]\!]\|_{L^2(S)}^2.
\end{align*} 
With these estimates at hand, we thus use standard bubble functions arguments, the Poincaré inequality, and the shape regularity property of the family $\{ \mathscr{T} \}$ to arrive at

\begin{equation}\label{eq:estimate_2}
h_T^3\| [\![\nabla \bar{y}_\mathscr{T}\cdot\nu ]\!]\|_{L^2(S)}^2
\lesssim
\sum_{T'\in\mathcal{N}_S} 
\bigg\{
\|\bar{y}-\bar{y}_\mathscr{T}\|_{L^2(T')}^2 \bigg\}
+
\|\bar{u}-\bar{u}_\mathscr{T}\|_{H^{-2}(\mathcal{N}_S)}^2,
\end{equation}
which concludes our proof.
\end{proof}

We now continue with the study of the local efficiency properties of the indicators $\E_p(\bar{p}_\T, \bar{y}_\T;T)$ defined by \eqref{def:adjoint_indicator}.

\begin{lemma}[local efficiency of $\E_p$]\label{efficiency_est_p}
Let $(\bar{y},\bar{p},\bar{u})\in W_0^{1,r}(\Omega) \times H_0^1(\Omega) \times \mathcal{M}(\Omega)$, where $2d/(d+2)\leq r < d/(d-1)$, be the solution to the optimality system \eqref{eq:optimality_system}, and $(\bar{y}_\mathscr{T},\bar{p}_\mathscr{T},\bar{u}_\mathscr{T})\in \mathbb{V}(\mathscr{T})\times \mathbb{V}(\mathscr{T})\times \mathbb{U}(\mathscr{T})$ be its numerical approximation obtained as the solution to the discrete optimality system \eqref{eq:discrete_optimality_system}. Then, for $T\in\mathscr{T}$, the local error indicator $\E_p(\bar{p}_\T, \bar{y}_\T;T)$, defined in \eqref{def:adjoint_indicator}, satisfies that 
\begin{equation}\label{eq:adjoint_efficiency}
\E_p(\bar{p}_\T, \bar{y}_\T;T)
\lesssim
\|\bar{p}-\bar{p}_\mathscr{T}\|_{L^\infty(\mathcal{N}_T^*)}
+
h_T^{2-d/2}\|\bar{y}-\bar{y}_\mathscr{T}\|_{L^{2}(\mathcal{N}_T^*)}
+
{\rm{osc}}_\T(y_d; \mathcal{N}_T^*),
\end{equation}
where $\mathcal{N}^*_T$ is defined in \eqref{NT_star} and the hidden constant is independent of the optimal variables, their approximations, the size of the elements in the mesh $\mathscr{T}$, and its cardinality $\#\mathscr{T}$.
\end{lemma}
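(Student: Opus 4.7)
The plan is to bound the two contributions of $\E_p(\bar p_\T,\bar y_\T;T)$ separately by direct bubble function arguments tied to the continuous adjoint $\bar p$ (rather than to the auxiliary state $\hat p$). The structural ingredient is that $\bar p\in H^2(\Omega)$, so $-\Delta\bar p=\bar y-y_d$ holds in $L^2(\Omega)$, while $\Delta\bar p_\T\equiv 0$ on each element since $\bar p_\T\in\mathbb{V}(\T)$ is piecewise linear. This lets us trade the residual $\bar y_\T-y_d$ for $(\bar y_\T-\bar y)-\Delta(\bar p-\bar p_\T)$, after which two clean integrations by parts pair the Laplacian against $\|\bar p-\bar p_\T\|_{L^\infty}$.

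For the residual contribution $h_T^{2-d/2}\|\bar y_\T-y_d\|_{L^2(T)}$, I would introduce a smooth element bubble $\phi_T$ on $T$ satisfying $\phi_T=0$ and $\nabla\phi_T=0$ on $\partial T$ (e.g.\ the square of the product of the barycentric coordinates of $T$), set $q=\Pi_\T(\bar y_\T-y_d)=\bar y_\T-\Pi_\T y_d$, and test with $v=q\phi_T$. Standard bubble equivalence yields $\|q\|_{L^2(T)}^2\lesssim\int_T qv$, and the split $\int_T qv=\int_T(\bar y_\T-y_d)v+\int_T(\Pi_\T y_d-y_d)v$ puts the oscillation into the second summand. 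Rewriting the first via $\bar y_\T-y_d=(\bar y_\T-\bar y)-\Delta(\bar p-\bar p_\T)$ and integrating by parts twice (no boundary terms by the choice of $\phi_T$) converts the $\Delta$-contribution into $-\int_T(\bar p-\bar p_\T)\Delta v$. Combining Cauchy--Schwarz with the inverse estimate $\|\Delta v\|_{L^1(T)}\lesssim h_T^{d/2-2}\|q\|_{L^2(T)}$, dividing by $\|q\|_{L^2(T)}$, and multiplying by $h_T^{2-d/2}$ gives
\[
h_T^{2-d/2}\|\bar y_\T-y_d\|_{L^2(T)}\lesssim h_T^{2-d/2}\|\bar y-\bar y_\T\|_{L^2(T)}+\|\bar p-\bar p_\T\|_{L^\infty(T)}+{\rm{osc}}_\T(y_d;T).
\]

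For the jump contribution $h_T\|[\![\nabla\bar p_\T\cdot\nu]\!]\|_{L^\infty(\partial T\setminus\partial\Omega)}$, I would fix $S\in\Sides_T$ and test with $\beta_S=[\![\nabla\bar p_\T\cdot\nu]\!]\psi_S$, where $\psi_S$ is the edge bubble from \eqref{def:bubble_functions}. Elementwise integration by parts combined with $\Delta\bar p_\T\equiv 0$ on each $T''\in\mathcal{N}_S$ gives $(\nabla\beta_S,\nabla\bar p_\T)_{L^2(\Omega)}=[\![\nabla\bar p_\T\cdot\nu]\!]^2\int_S\psi_S\gtrsim|[\![\nabla\bar p_\T\cdot\nu]\!]|^2 h_T^{d-1}$. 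The same inner product decomposes as $(\nabla\beta_S,\nabla\bar p)-(\nabla\beta_S,\nabla(\bar p-\bar p_\T))$: the first summand equals $(\bar y-y_d,\beta_S)_{L^2(\mathcal{N}_S)}$ via the continuous adjoint equation (note $\beta_S\in H^1_0(\mathcal{N}_S)$), and the second, after moving both derivatives onto $\beta_S$ by integration by parts (boundary contributions on $\partial\mathcal{N}_S$ and on $S$ vanish thanks to \eqref{eq:delta_gamma}), equals $\int_{\mathcal{N}_S}(\bar p-\bar p_\T)\Delta\beta_S$. Using the scalings $\|\beta_S\|_{L^2(\mathcal{N}_S)}\lesssim|[\![\nabla\bar p_\T\cdot\nu]\!]|h_T^{d/2}$ and $\|\Delta\beta_S\|_{L^1(\mathcal{N}_S)}\lesssim|[\![\nabla\bar p_\T\cdot\nu]\!]|h_T^{d-2}$, dividing by $|[\![\nabla\bar p_\T\cdot\nu]\!]|h_T^{d-1}$, and multiplying by $h_T$ yield
\[
h_T|[\![\nabla\bar p_\T\cdot\nu]\!]|\lesssim h_T^{2-d/2}\|\bar y-y_d\|_{L^2(\mathcal{N}_S)}+\|\bar p-\bar p_\T\|_{L^\infty(\mathcal{N}_S)}.
\]
Splitting $\|\bar y-y_d\|_{L^2(\mathcal{N}_S)}\leq\|\bar y-\bar y_\T\|_{L^2(\mathcal{N}_S)}+\|\bar y_\T-y_d\|_{L^2(\mathcal{N}_S)}$ and invoking the residual estimate above on each $T''\in\mathcal{N}_S$ for the latter, then maximizing over $S\in\Sides_T$, assembles all three terms on the right-hand side of the claim with patches absorbed into $\mathcal{N}_T^*$.

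The main obstacle is the presence of $\|\bar p-\bar p_\T\|_{L^\infty}$ rather than an energy norm on the right-hand side: this forces both integrations by parts against each bubble to be boundary-free, which in turn dictates the choice of element bubble (vanishing gradient on $\partial T$) and makes essential use of the edge-bubble property \eqref{eq:delta_gamma}. Equally delicate is the interplay between the two estimates: the jump bound first produces $\|\bar y-y_d\|_{L^2}$, which must then be replaced via the residual bound on each element of $\mathcal{N}_S$, and it is the precise inverse-estimate scalings $h_T^{d/2-2}$ and $h_T^{d-2}$ that conspire to produce the correct $h_T^{2-d/2}$ weight on the local $L^2$-error of the state in the final estimate.
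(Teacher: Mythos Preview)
Your argument is correct and follows essentially the same route as the paper: the same error identity (obtained by two integrations by parts), the same squared element bubble $\varphi_T^2$ for the residual term, and the same split $\|\bar y-y_d\|\le\|\bar y-\bar y_\T\|+\|\bar y_\T-y_d\|$ to close the jump estimate. The one minor deviation is your choice of edge bubble: you use the $C^2$ bubble $\psi_S$ from \eqref{def:bubble_functions}, whose property \eqref{eq:delta_gamma} kills the interface term $\int_S(\bar p-\bar p_\T)[\![\nabla w\cdot\nu]\!]$, whereas the paper uses the standard edge bubble $\varphi_S$ and simply absorbs that extra term directly via $h_S^{-1}|S'|\,\|\bar p-\bar p_\T\|_{L^\infty}$; both choices lead to the same bound.
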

\begin{proof} We follow closely the arguments elaborated in \cite[Lemma 5.4]{AORS2017}. Let $w\in H_0^1(\Omega)$ be such that $w|_T\in C^2(\Omega)$ for $T \in \mathscr{T}$. Consider $w$ as a test function in the adjoint equation \eqref{eq:adjoint_pde}.
%%Invoking the adjoint equation \eqref{eq:adjoint_pde}, 
An argument based on integration by parts reveals that
\begin{align}\label{eq:adjoint_ibp_1}
\begin{split}
\int_\Omega \nabla w \nabla (\bar{p} - \bar{p}_\T) = 
%\sum_{T\in \T} \bigg(\int_T(\bar{y}-\bar{y}_\mathscr{T})w \: + & \int_T(\bar{y}_\mathscr{T}-\Pi_\T y_d)w\\
%+  \int_T(\Pi_\T y_d \:-\: &y_d)w \bigg)  + \sum_{S\in\Sides}\int_S [\![\nabla \bar{p}_\mathscr{T}\cdot \nu ]\!] %w.
\sum_{T\in \T} \int_T(\bar{y}-y_d) w + \sum_{S\in\Sides}\int_S [\![\nabla \bar{p}_\mathscr{T}\cdot \nu ]\!] w.
\end{split}
\end{align}
In view of the regularity properties of the function $w$, we can, again, integrate by parts to obtain that
\begin{equation}\label{eq:adjoint_ibp_2}
\int_\Omega  \nabla w \nabla (\bar{p} - \bar{p}_\T) =- \sum_{S\in\Sides}\int_S [\![\nabla w\cdot \nu ]\!] (\bar{p}-\bar{p}_\mathscr{T}) - \sum_{T\in \T}\int_T(\bar{p}-\bar{p}_\mathscr{T}) \Delta w.
\end{equation}
As a conclusion, we have thus obtained the following identity
\begin{multline}\label{eq:adjoint_ibp_3}
 \sum_{T\in \T} 
% \bigg(
 \int_T(\bar{y}-y_d)w 
% + \int_T(\bar{y}_\T-\Pi_\T y_d)w + \int_T(\Pi_\T y_d-y_d)w\bigg) 
 +\sum_{S\in\Sides}\int_S [\![\nabla \bar{p}_\mathscr{T}\cdot \nu ]\!] w \\
 = - \sum_{S\in\Sides}\int_S [\![\nabla w\cdot \nu ]\!] (\bar{p}-\bar{p}_\mathscr{T}) - \sum_{T\in \T}\int_T(\bar{p}-\bar{p}_\mathscr{T}) \Delta w,
\end{multline}
that holds for every $w \in H_0^1(\Omega)$ such that $w|_T\in C^2(T)$ for all $T\in\T$.
With this error equation at hand, we proceed to derive the local efficiency properties of the error indicator $\E_p(\bar{p}_\T, \bar{y}_\T;T)$: the analysis involves two steps.

\noindent \underline{\emph{Step 1.}} In this step we bound the term $h_T^{2-d/2} \| \bar{y}_\T - y_d \|_{L^2(T)}$ in \eqref{def:adjoint_indicator} for $T\in \T$. We begin with a simple application of the triangle inequality:
\begin{equation*}
h_T^{2-d/2} \| \bar{y}_\T - y_d \|_{L^2(T)}
\leq
h_T^{2-d/2} \| \bar{y}_\T - \Pi_\T y_d \|_{L^2(T)} + h_T^{2-d/2} \| \Pi_\T y_d - y_d \|_{L^2(T)},
\end{equation*}
where $\Pi_\T$ denotes the $L^2$--projection operator onto piecewise linear functions over $\T$.
Now we set $w=\zeta_T:=(\bar{y}_\mathscr{T}-\Pi_\T y_d)\varphi_T^2$ in \eqref{eq:adjoint_ibp_3}, where $\varphi_T$ is the standard bubble function over $T$ \citep{ver1989,ver2013}. This yields
\begin{multline}\label{eq:estimate_1_adjoint}
\int_T (\bar{y}_\mathscr{T}-\Pi_\T y_d)\zeta_T
= \int_T \left[ (\bar{y} -y_d) - (\bar y - \bar y_{\T}) + (y_d - \Pi_{\T} y_d) \right] \zeta_T
\\ 
=  - \int_T(\bar{p}-\bar{p}_\mathscr{T}) \Delta \zeta_T
-
\int_T(\bar{y}-\bar{y}_\mathscr{T})\zeta_T + \int_T(y_d - \Pi_\T y_d)\zeta_T =: \rm{I + II + III}.
\end{multline}

To compute $\Delta \zeta_T$ on $T$, we first notice that $\Delta (\bar{y}_\mathscr{T}-\Pi_\T y_d)|_T=0$. A basic computation thus reveals that
\begin{equation*}
\Delta \zeta_T = 4 \nabla (\bar{y}_\mathscr{T}-\Pi_\T y_d) \cdot \nabla	\varphi_T \varphi_T
+ 
2(\bar{y}_\mathscr{T}-\Pi_\T y_d)(\varphi_T \Delta \varphi_T+\nabla \varphi_T \cdot \nabla \varphi_T).
\end{equation*}

The control of the term $\textrm{I}$ follows from the the previous identity, the properties of the bubble function $\varphi_T$ and an inverse inequality. In fact, we have that
\begin{multline}
\label{eq:estimate_2_adjoint}
|\textrm{I}| 
\lesssim
\left( h_T^{\frac{d}{2}-1}\|\nabla (\bar{y}_\mathscr{T}-\Pi_\T y_d)\|_{L^2(T)}
+
h_T^{\frac{d}{2}-2}\|\bar{y}_\mathscr{T}-\Pi_\T y_d\|_{L^2(T)}\right)
\\
\cdot \|\bar{p}-\bar{p}_\mathscr{T}\|_{L^\infty(T)}
\lesssim 
h_T^{d/2-2}\|\bar{y}_\mathscr{T}-\Pi_\T y_d\|_{L^2(T)}
\|\bar{p}-\bar{p}_\mathscr{T}\|_{L^\infty(T)}.
\end{multline}

The terms $\rm{II}$ and $\rm{III}$ in \eqref{eq:estimate_1_adjoint} are bounded in view of standard properties of the bubble function $\varphi_T$: we have that
\begin{equation}
|\mathrm{II}|
\lesssim
\|\bar{y}-\bar{y}_\mathscr{T}\|_{L^2(T)}
\|\bar{y}_\mathscr{T}-\Pi_\T y_d\|_{L^2(T)}
\label{eq:estimate_3_adjoint}
\end{equation}
and that
\begin{equation}
|\mathrm{III}| 
\lesssim 
\| y_d - \Pi_\T y_d\|_{L^2(T)}
\|\bar{y}_\mathscr{T}-\Pi_\T y_d\|_{L^2(T)}.
\label{eq:estimate_4_adjoint}
\end{equation}

Finally, since $\|\bar{y}_\mathscr{T}- \Pi_\T y_d\|^2_{L^2(T)} \lesssim \int_T (\bar{y}_\mathscr{T}- \Pi_\T y_d) \zeta_T$, the identity \eqref{eq:estimate_1_adjoint}, combined with the estimates \eqref{eq:estimate_2_adjoint}, \eqref{eq:estimate_3_adjoint}  and \eqref{eq:estimate_4_adjoint}, allow us to conclude that
\begin{multline}\label{eq:residual_bound}
h_T^{2-\frac{d}{2}}\|\bar{y}_\mathscr{T}-y_d\|_{L^2(T)}
\lesssim  \|\bar{p}-\bar{p}_\mathscr{T}\|_{L^\infty(T)}\\
+
h_T^{2-\frac{d}{2}}\|\bar{y}-\bar{y}_\mathscr{T}\|_{L^2(T)} + {\rm{osc}}_\T(y_d;T),
\end{multline}
where ${\rm{osc}}_\T(y_d;T)$ is defined by \eqref{def:osc_term}.

\noindent \underline{\emph{Step 2.}} Let $T\in\mathscr{T}$ and $S\in \Sides_T$. The objetive of this step is control the term $h_T\|[\![ \nabla \bar{p}_\mathscr{T}\cdot \nu]\!]\|_{L^\infty(\partial T\setminus \partial \Omega)}$ in \eqref{def:adjoint_indicator}. To accomplish this task, 
we invoke the standard bubble function over $S$ which we denote $\varphi_S$. Recall that, according to \citep{ver1989,ver2013} this function satisfies
% we recall some standard properties that the standard bubble function  $\varphi_S$, over $S$, : 
\begin{equation*}
|\nabla^l \varphi_S|\approx h_S^{-l}, \quad l=0,1,2,
\qquad
|S|\|[\![\nabla \bar{p}_\mathscr{T} \cdot \nu]\!] \|_{L^\infty(S)}
\lesssim
\left| \int_S [\![\nabla \bar{p}_\mathscr{T} \cdot \nu]\!] \varphi_S  \right|.
\end{equation*}
To bound the term on the right--hand side of the previous inequality, we set $w=\varphi_S$ in \eqref{eq:adjoint_ibp_3} and obtain that
\begin{multline*}
\left| \int_S [\![\nabla \bar{p}_\mathscr{T} \cdot \nu]\!] \varphi_S  \right|  
\leq
\sum_{T'\in\mathcal{N}_S}\int_{T'}|\bar{y}-y_d|\varphi_S
+
\sum_{T'\in\mathcal{N}_S}\int_{T'}|\bar{p}-\bar{p}_\mathscr{T}||\Delta \varphi_S|
\\
+ \sum_{T'\in\mathcal{N}_S}\sum_{S'\in\Sides_{T'}}
\int_{S'}|\bar{p}-\bar{p}_\mathscr{T}|\left|[\![\nabla \varphi_S \cdot \nu]\!]\right| \\ \nonumber
\lesssim
\sum_{T'\in\mathcal{N}_S}|T'|^{\frac{1}{2}}
(\|\bar{y}-\bar{y}_\mathscr{T}\|_{L^2(T')}
+
\|\bar{y}_\mathscr{T}-y_d\|_{L^2(T')}) 
\\
+\sum_{T'\in \mathcal{N}_{S}}
\bigg(
h_S^{-2}|T'|
+
h_S^{-1}\sum_{{S'}\in \Sides_{T'}}|S'|
\bigg)
\|\bar{p}-\bar{p}_\mathscr{T}\|_{L^\infty(T')}.
\end{multline*} 
Upon multiplying this inequality by $h_T|S|^{-1}$, using the shape regularity of the mesh $\T$, in addition with \eqref{eq:residual_bound}, yields 
\begin{multline}\label{eq:jump_bound}
h_T\|[\![\nabla \bar{p}_\mathscr{T}\cdot \nu]\!]\|_{L^\infty(S)}
\lesssim \|\bar{p}-\bar{p}_\mathscr{T}\|_{L^\infty(\mathcal{N}_S)}
\\
+
h_T^{2-d/2}\|\bar{y}-\bar{y}_\mathscr{T}\|_{L^2(\mathcal{N}_S)}
+
{\rm{osc}}_\T(y_d;\mathcal{N}_S).
\end{multline}
Combining \eqref{eq:residual_bound} and \eqref{eq:jump_bound}, we arrive at \eqref{eq:adjoint_efficiency}.
\end{proof}

The results of Lemmas \ref{efficiency_est_y} and \ref{efficiency_est_p} immediately yield the following result.

\begin{theorem}[local efficiency of $\E_{\rm{ocp}}$]\label{efficiency_est_ocp}
Let $(\bar{y},\bar{p},\bar{u})\in W_0^{1,r}(\Omega) \times H_0^1(\Omega) \times \mathcal{M}(\Omega)$, with $2d/(d+2)\leq r < d/(d-1)$, be the solution to the optimality system \eqref{eq:optimality_system}, and $(\bar{y}_\mathscr{T},\bar{p}_\mathscr{T},\bar{u}_\mathscr{T})\in \mathbb{V}(\mathscr{T})\times \mathbb{V}(\mathscr{T})\times \mathbb{U}(\mathscr{T})$ be its numerical approximation obtained as the solution to the discrete optimality system \eqref{eq:discrete_optimality_system}. Then, for $T\in\mathscr{T}$, it holds
\begin{multline*}
\E_y^2(\bar{y}_\T, \bar{u}_\T;T)  + \E_p^2(\bar{p}_\T, \bar{y}_\T;T) 
\lesssim 
\|\bar{p}-\bar{p}_\mathscr{T}\|_{L^\infty(\mathcal{N}_T^*)}^{2}
+
(1+h_T^{4-d})\|\bar{y}-\bar{y}_\mathscr{T}\|_{L^{2}(\mathcal{N}_T^*)}^{2} \\
+
\sum_{S \in \Sides_T} \|\bar{u}-\bar{u}_\mathscr{T}\|_{H^{-2}(\mathcal{N}_S)}^{2}
+
{\rm{osc}}^2_\T(y_d; \mathcal{N}_T^*),
\end{multline*}
where $\mathcal{N}^*_T$ is defined in \eqref{NT_star} and the hidden constant is independent of the optimal variables, their approximations, the size of the elements in the mesh $\mathscr{T}$, and its cardinality $\#\mathscr{T}$.
\end{theorem}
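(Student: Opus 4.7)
The plan is to derive the stated bound by a direct combination of Lemma \ref{efficiency_est_y} and Lemma \ref{efficiency_est_p}; no new technical ingredient is needed, only a bookkeeping step on the right-hand sides.

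First, I would invoke Lemma \ref{efficiency_est_y} exactly in the form \eqref{eq:state_efficiency}, giving a control of $\E_y^2(\bar{y}_\T,\bar{u}_\T;T)$ by $\|\bar{y}-\bar{y}_\T\|_{L^2(\mathcal{N}_T^*)}^2$ plus the sum of the local $H^{-2}$ control errors $\|\bar{u}-\bar{u}_\T\|_{H^{-2}(\mathcal{N}_S)}^2$ over $S\in\Sides_T$. The $\mathcal{N}_T$ appearing in \eqref{eq:state_efficiency} is contained in $\mathcal{N}_T^*$ by the definitions \eqref{NT}--\eqref{NT_star}, so the state contribution can be written over $\mathcal{N}_T^*$ without loss.

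Second, I would square the efficiency bound \eqref{eq:adjoint_efficiency} from Lemma \ref{efficiency_est_p} using the elementary inequality $(a+b+c)^2 \lesssim a^2+b^2+c^2$. This yields
\begin{equation*}
\E_p^2(\bar{p}_\T,\bar{y}_\T;T) \lesssim \|\bar{p}-\bar{p}_\T\|_{L^\infty(\mathcal{N}_T^*)}^{2} + h_T^{4-d}\|\bar{y}-\bar{y}_\T\|_{L^{2}(\mathcal{N}_T^*)}^{2} + {\rm{osc}}_\T^2(y_d;\mathcal{N}_T^*),
\end{equation*}
where the factor $h_T^{4-d}$ comes from squaring $h_T^{2-d/2}$.

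Finally, I would add the two squared bounds. The two $\|\bar{y}-\bar{y}_\T\|_{L^2(\mathcal{N}_T^*)}^{2}$ contributions, one with prefactor $1$ from the state indicator and the other with prefactor $h_T^{4-d}$ from the adjoint indicator, combine into the single term $(1+h_T^{4-d})\|\bar{y}-\bar{y}_\T\|_{L^2(\mathcal{N}_T^*)}^{2}$, and the remaining terms reproduce the $L^\infty$ adjoint error, the control error sum, and the oscillation term appearing in the statement. There is no real obstacle here: the work was done in the two lemmas, and the only subtlety is making sure that the patches $\mathcal{N}_T$, $\mathcal{N}_S$, and $\mathcal{N}_T^*$ line up correctly, which follows from the inclusions among the definitions \eqref{NT}--\eqref{NT_star}.
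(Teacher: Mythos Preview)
Your proposal is correct and matches the paper's own proof, which simply states that the result follows immediately from Lemmas \ref{efficiency_est_y} and \ref{efficiency_est_p}. One small remark: the bound \eqref{eq:state_efficiency} already has $\mathcal{N}_T^*$ on the right-hand side, so your inclusion argument between $\mathcal{N}_T$ and $\mathcal{N}_T^*$ is unnecessary (and in fact the inclusion goes the other way, $\mathcal{N}_T^* \subset \mathcal{N}_T$, by the definitions \eqref{NT}--\eqref{NT_star}); but this does not affect the argument.
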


%%%%%%%%%%%%%%%%%%%%%%%%%%%%%%%%%%%%%%%%%%%%%%%%%%%%%%%%%%%%%%%%%%%%%%%%%%%
%%%%%%%%%%%%%%%%%%%%%%%%%%%%%%%%%%%%%%%%%%%%%%%%%%%%%%%%%%%%%%%%%%%%%%%%%%%
%%%%%%%%%%%%%%%%%%%%%%%%%%%%%%%%%%%%%%%%%%%%%%%%%%%%%%%%%%%%%%%%%%%%%%%%%%
%%%%%%%%%%%%%%%%%%%%%%%%%%%%%%%%%%%%%%%%%%%%%%%%%%%%%%%%%%%%%%%%%%%%%%%%%%%
%%%%%%%%%%%%%%%%%%%%%%%%%%%%%%%%%%%%%%%%%%%%%%%%%%%%%%%%%%%%%%%%%%%%%%%%%%%
%%%%%%%%%%%%%%%%%%%%%%%%%%%%%%%%%%%%%%%%%%%%%%%%%%%%%%%%%%%%%%%%%%%%%%%%%%

\section{Numerical Examples}\label{ne}
In this section we conduct a series of numerical examples that illustrate the performance of the error estimator that we designed and analyzed in section \ref{global_estimator}. In Section \ref{sub:example_3} below, we go beyond the presented analysis and perform a numerical experiment where the assumption of the convexity of the domain is violated. The presented numerical examples have been carried out with the help of a code that we implemented using MATLAB (R2017a). All matrices have been assembled exactly. The right hand sides as well as the  approximation errors are computed by a quadrature formula which is exact for polynomials of degree 19.

\footnotesize{
\begin{algorithm}[ht]
\caption{\textbf{Adaptive semismooth Newton algorithm.}}
\label{Algorithm}
\SetKwInput{set}{Set}
\SetKwInput{ase}{Active set strategy}
\SetKwInput{al}{A posteriori error estimation}
\SetKwInput{Input}{Input}
\SetAlgoLined
\Input{Initial mesh $\mathscr{T}_0$, desired state $y_d$ and sparsity parameter $\alpha$.}
\set{$i=0$.}
\ase{}
Compute $[\bar{y}^{}_\mathscr{T},\bar{p}^{}_\mathscr{T},\bar{u}^{}_\mathscr{T}]=\textbf{Newton method}[\mathscr{T},\alpha, y_d]$, which implements the semismooth Newton method of \citep[Section 6]{CCK2012}. 
\\
\al 
\\
For each $T\in\mathscr{T}$ compute the local error indicator $\E^2_{\mathrm{ocp},T}$ given in \eqref{def:total_indicator}.
\\
Mark an element $T$ for refinement if $\E^2_{\mathrm{ocp},T}> \displaystyle\frac{1}{2}\max_{T'\in \mathscr{T}}\E^2_{\mathrm{ocp},T'}$.
\\
From step $\boldsymbol{3}$, construct a new mesh, using a longest edge bisection algorithm. Set $i \leftarrow i + 1$, and go to step $\boldsymbol{1}$.
\end{algorithm}}
\normalsize

\begin{figure}[ht]
\centering
\begin{minipage}{0.22\textwidth}\centering
\includegraphics[width=2cm,height=2cm,scale=0.5]{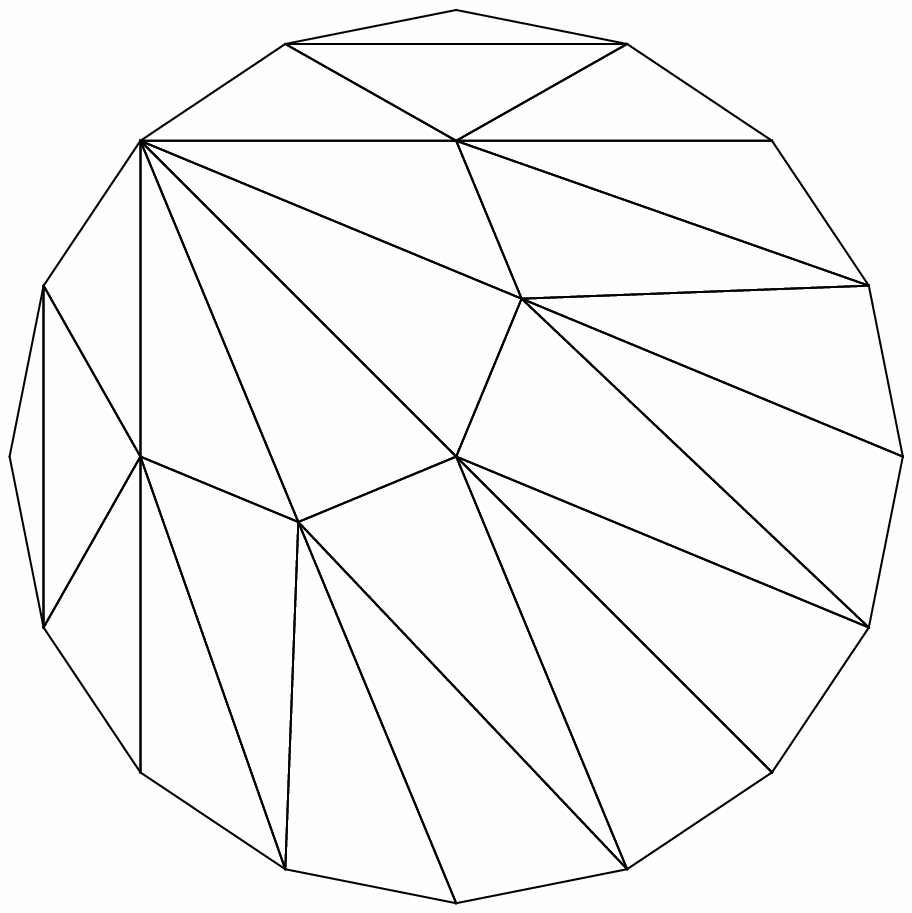}\\
\end{minipage}
\begin{minipage}{0.22\textwidth}\centering
\includegraphics[width=2cm,height=2cm,scale=0.5]{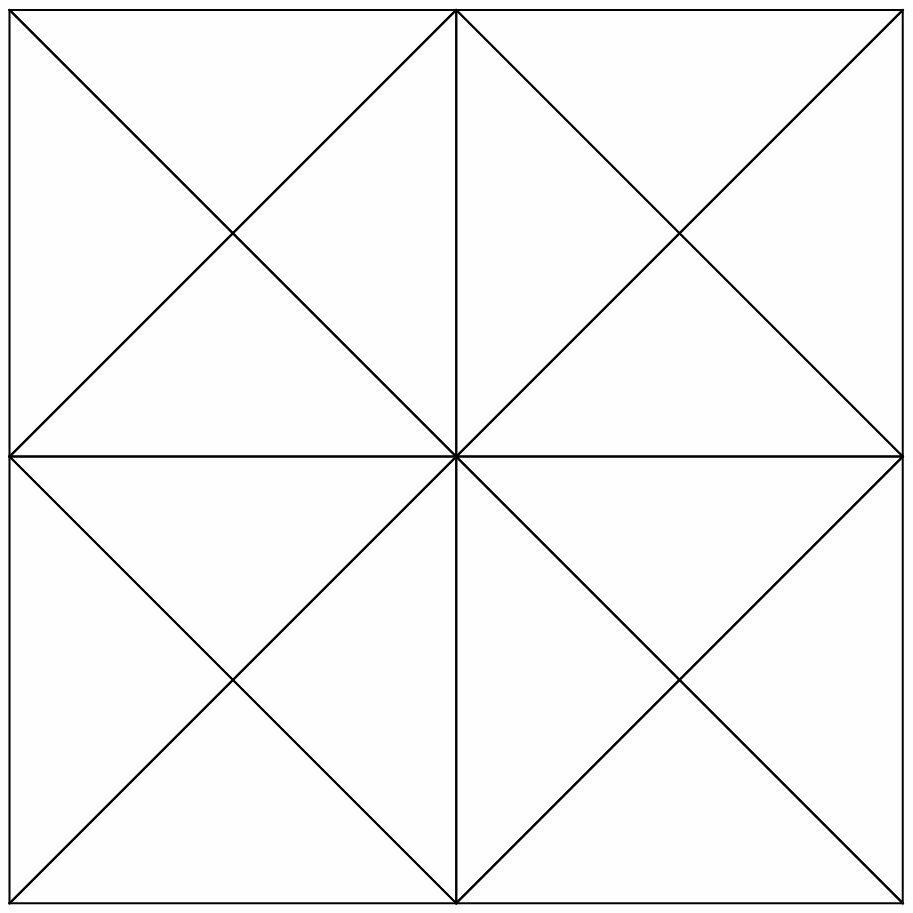}\\
\end{minipage}
\begin{minipage}{0.22\textwidth}\centering
\includegraphics[width=2cm,height=2cm,scale=0.5]{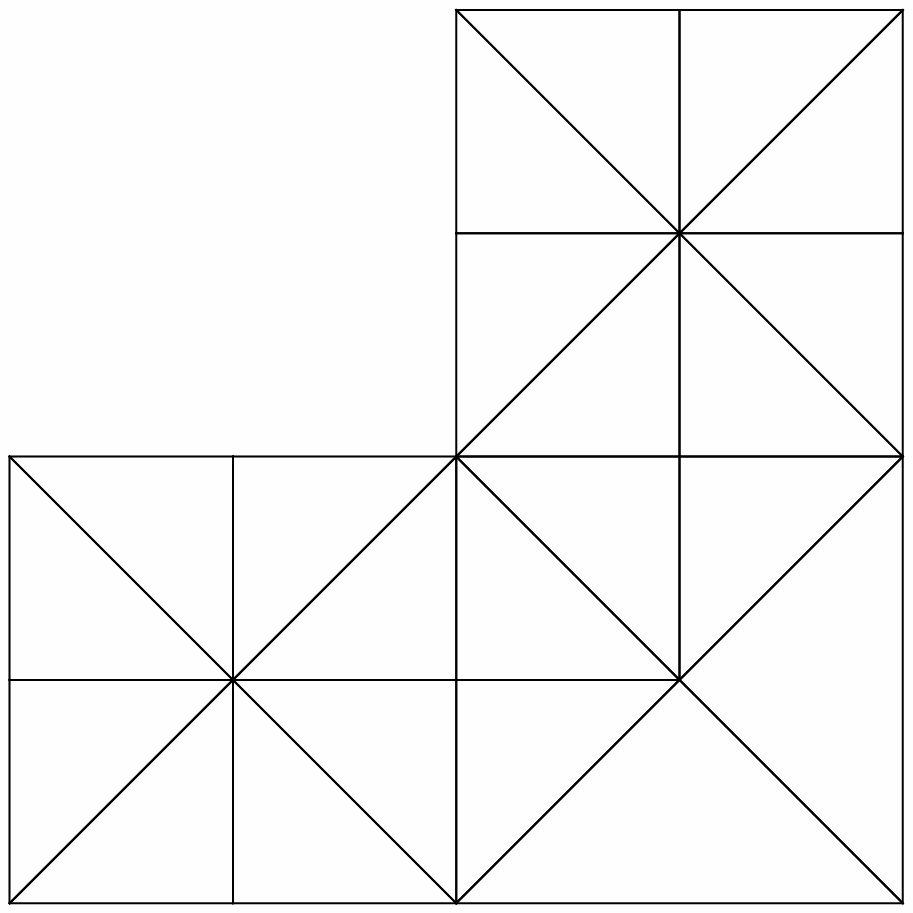}\\
\end{minipage}
\caption{The initial meshes used when the domain $\Omega$ is a circle (Example 1), a square (Example 2) and a L--shaped domain (Example 3).}
\label{fig:initial_meshes}
\end{figure}

For a given partition $\T$, we seek $(\bar{y}_\T,\bar{p}_\T,\bar{u}_\T)\in \mathbb{V}(\T)\times \mathbb{V}(\T)\times \mathbb{U}(\T)$ that solves the discrete optimality system \eqref{eq:discrete_optimality_system}. We solve such a nonlinear system of equations using the semismooth Newton method that was devised in \citep[Section 6]{CCK2012}. Once a discrete solution is obtained, on the basis of \eqref{eq:estimador_bound}, we compute the error indicator
\begin{equation}\label{def:total_indicator}
\E^2_{\mathrm{ocp},T}:= \E_{\rm{ocp}}^2(\bar{y}_\T,\bar{p}_\T,\bar{u}_\T;T)= \E_y^2(\bar{y}_\T,\bar{u}_\T;T)+\E_p^2(\bar{p}_\T,\bar{y}_\T;T),
\end{equation}
which is defined in terms of the local indicators given by \eqref{def:state_indicator} and \eqref{def:adjoint_indicator}, to drive the adaptive procedure described in Algorithm \ref{Algorithm}. For the numerical results, we define the total number of degrees of freedom $\rm{Ndof} = 2\:\rm{dim}(\mathbb{V}(\T)) + \rm{dim}(\mathbb{U}(\T))$, and the errors $e_y := \bar{y}-\bar{y}_\T$ and  $e_p := \bar{p}-\bar{p}_\T$. To assess the accuracy of the approximation, and since $\|\bar{u}-\bar{u}_\T\|_{H^{-2}(\Omega)}$ is not computable, we measure the error in the norm
\[
\|(e_y,e_p)\|_\Omega:= \left (\|e_y\|_{L^2(\Omega)}^2+\|e_p\|_{L^\infty(\Omega)}^2 \right)^{\tfrac{1}{2}}.
\]
The initial meshes for our numerical examples are shown in Figure \ref{fig:initial_meshes}.

We now provide three numerical experiments. First we consider a problem where an exact solution can be obtained: it is based on the solution constructed in \citep[Section 8.1]{PV2013}. In the second example the exact solution is not known. Finally, in the third example, we go beyond the presented analysis and perform a numerical experiment where we violate the assumption of the convexity of the domain.

%%%%%%%%%%%%%%%%%%%%%%%%%%%%%%%%%%%%%%%%%%%%%%%%%%%%%%%%%%%%%

\subsection{A problem with known exact solution}
\label{sub:example1}

\begin{figure}[ht]
\begin{minipage}{0.32\textwidth}\centering
\psfrag{estimador-t}{$\E_{\rm{ocp}}$}
\psfrag{estimador y}{$\E_y$}
\psfrag{estimador p}{$\E_p$}
\psfrag{Ndofs}{$\textrm{Ndof}$}
\psfrag{Ndofs-pot}{$\footnotesize{\textrm{Ndof}^{-1}}$}
\psfrag{estimators-ex1}{\hspace{-1.2cm}\small{Estimator and its contributions}}
\includegraphics[trim={0 0 0 0},clip,width=4cm,height=4cm,scale=0.6]{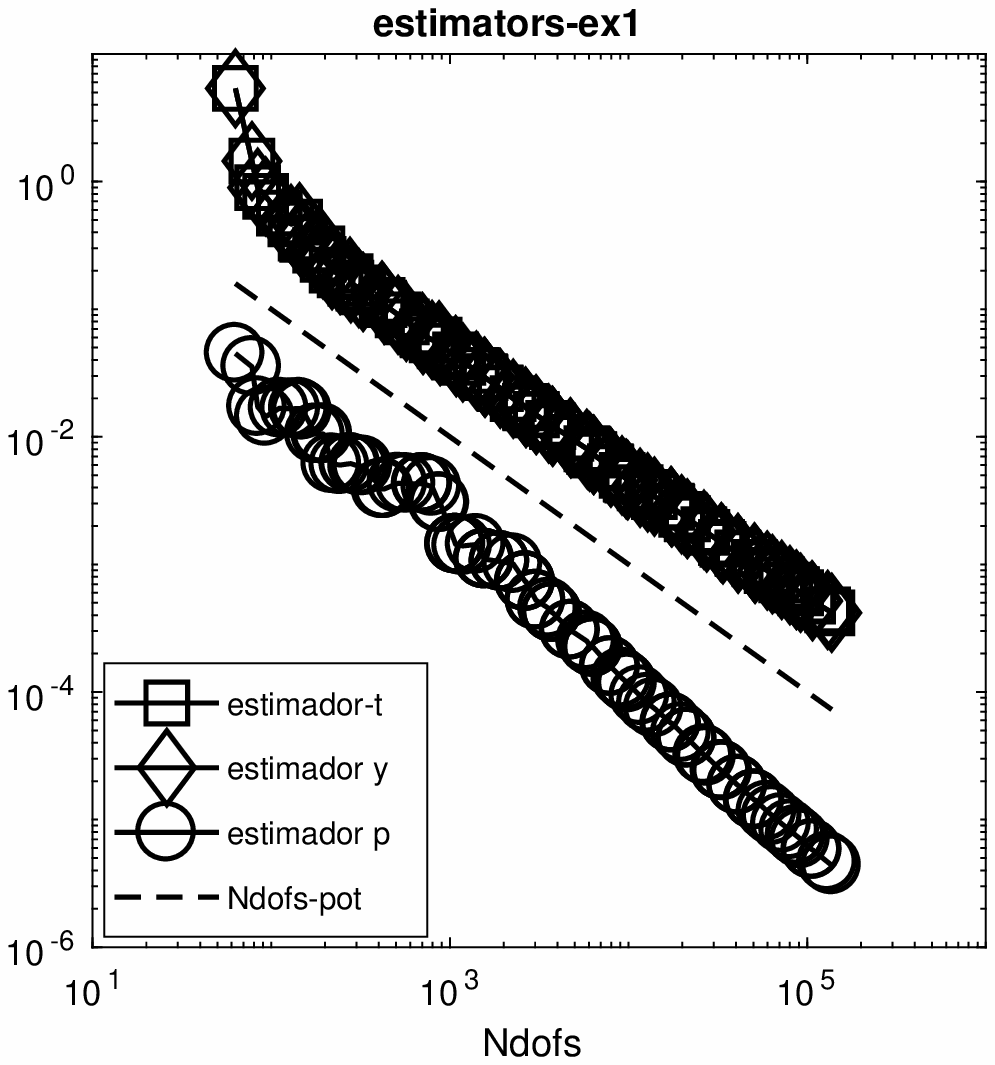}\\\tiny{(a)}
\end{minipage}
\begin{minipage}{0.32\textwidth}\centering
\psfrag{error y}{$\|e_y\|_{L^2(\Omega)}$}
\psfrag{error p}{$\|e_p\|_{L^\infty(\Omega)}$}
\psfrag{Ndofs}{$\textrm{Ndof}$}
\psfrag{Ndofs-pot}{$\footnotesize{\textrm{Ndof}^{-1}}$}
\psfrag{errors-ex1}{\hspace{-1cm}\small{Approximation errors}}
\includegraphics[trim={0 0 0 0},clip,width=4cm,height=4cm,scale=0.6]{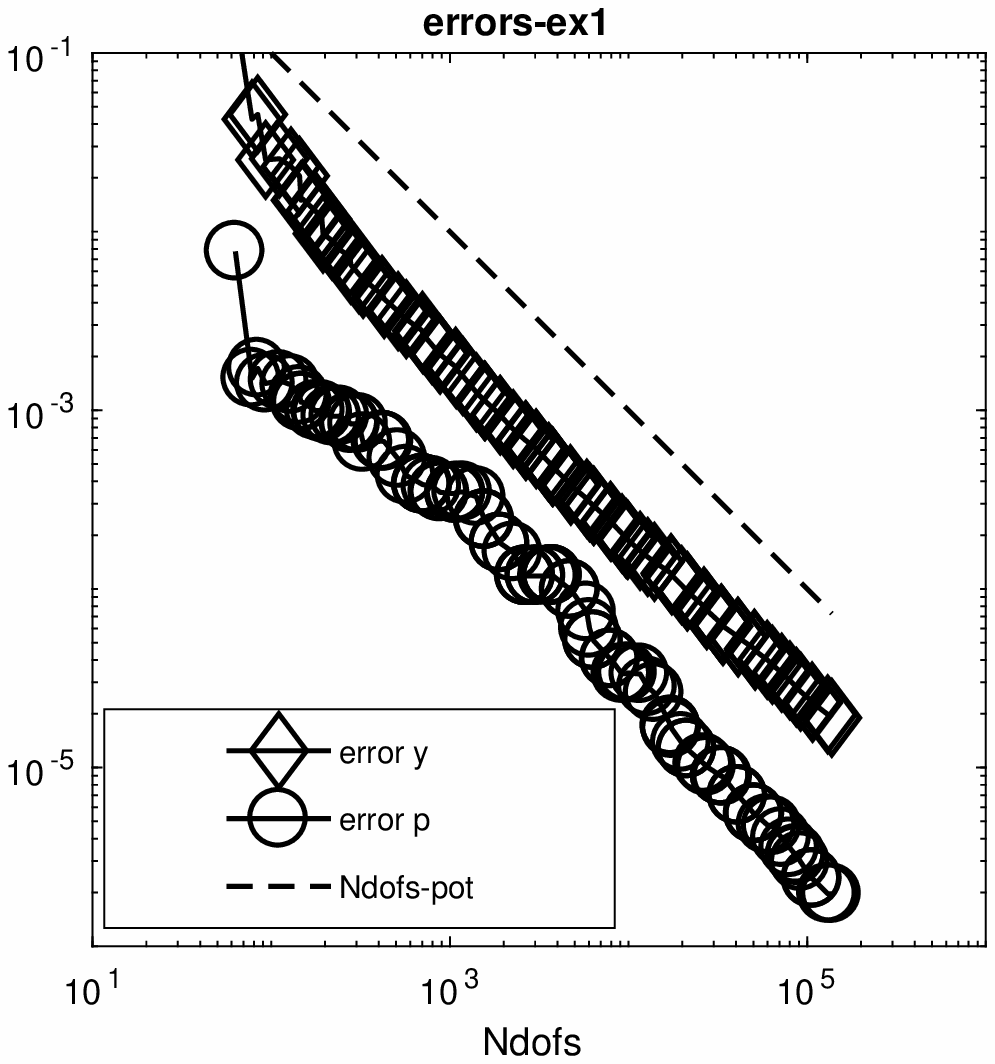}\\\tiny{(b)}
\end{minipage}
\begin{minipage}{0.32\textwidth}\centering
\psfrag{error y unif}{$\|e_y\|_{L^2(\Omega)}$}
\psfrag{error y adap}{$\|e_y\|_{L^2(\Omega)}$}
\psfrag{Ndofs}{$\textrm{Ndof}$}
\psfrag{Ndofs-pot}{$\footnotesize{\textrm{Ndof}^{-\frac{1}{2}}}$}
\psfrag{Ndofs-pot-1}{$\footnotesize{\textrm{Ndof}^{-1}}$}
\psfrag{errors-ex1}{\hspace{-1.6cm}\small{Adaptive vs uniform refinement}}
\includegraphics[trim={0 0 0 0},clip,width=4cm,height=4cm,scale=0.6]{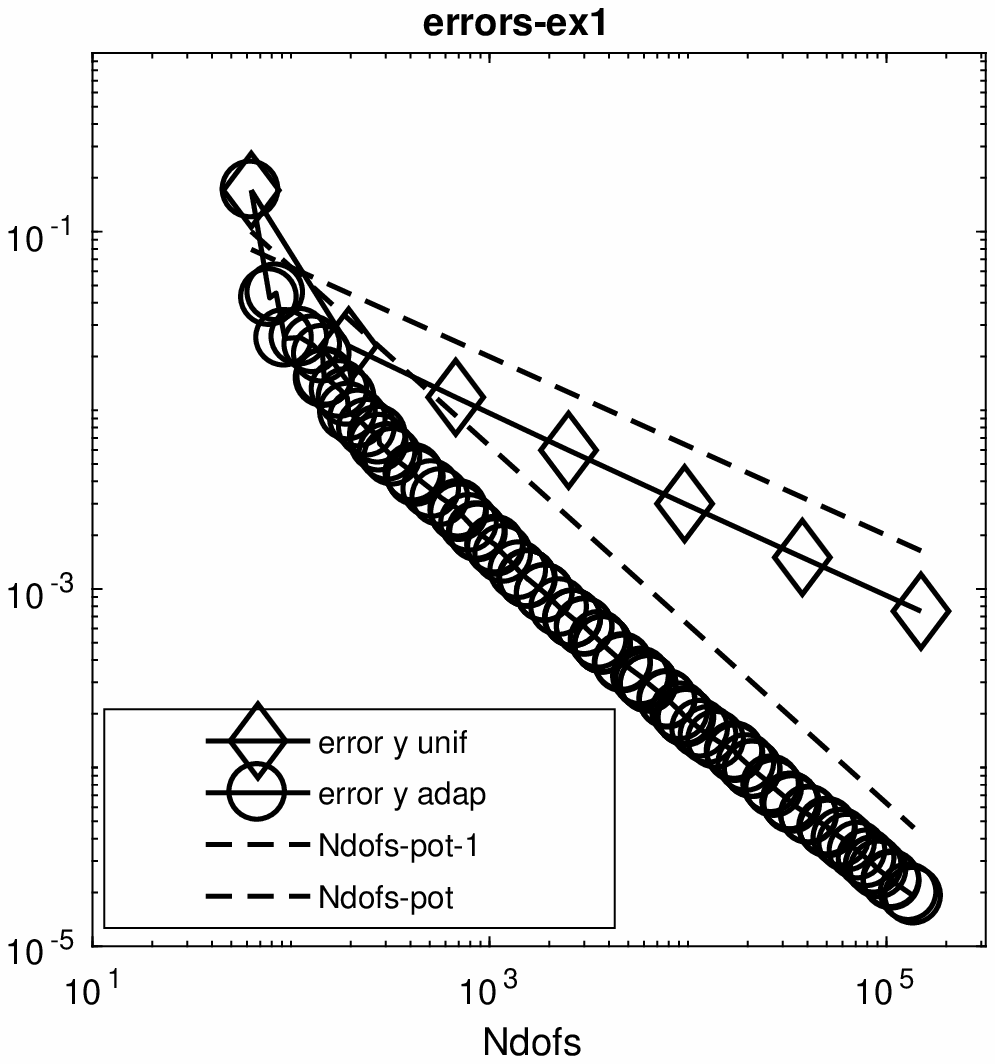}\\\tiny{(c)}
\end{minipage}
\\
% % \begin{minipage}{0.32\textwidth}\centering
% % \hspace{-1cm}\psfrag{eff-index}{${\E_{ocp}}/{\|(e_y,e_p)\|_\Omega}$}
% % \psfrag{effectivity-index}{}
% % \includegraphics[trim={0 0 0 0},clip,width=4cm,height=4cm,scale=0.6]{Figures/Ex_1-30-iter/eff_index.eps}\\\tiny{(d)}
% % \end{minipage}
\begin{minipage}[c]{0.3\textwidth}\centering
\psfrag{Uh}{$\bar{u}_\T$}
\includegraphics[trim={0 0 0 0},clip,width=3.7cm,height=3.7cm,scale=0.6]{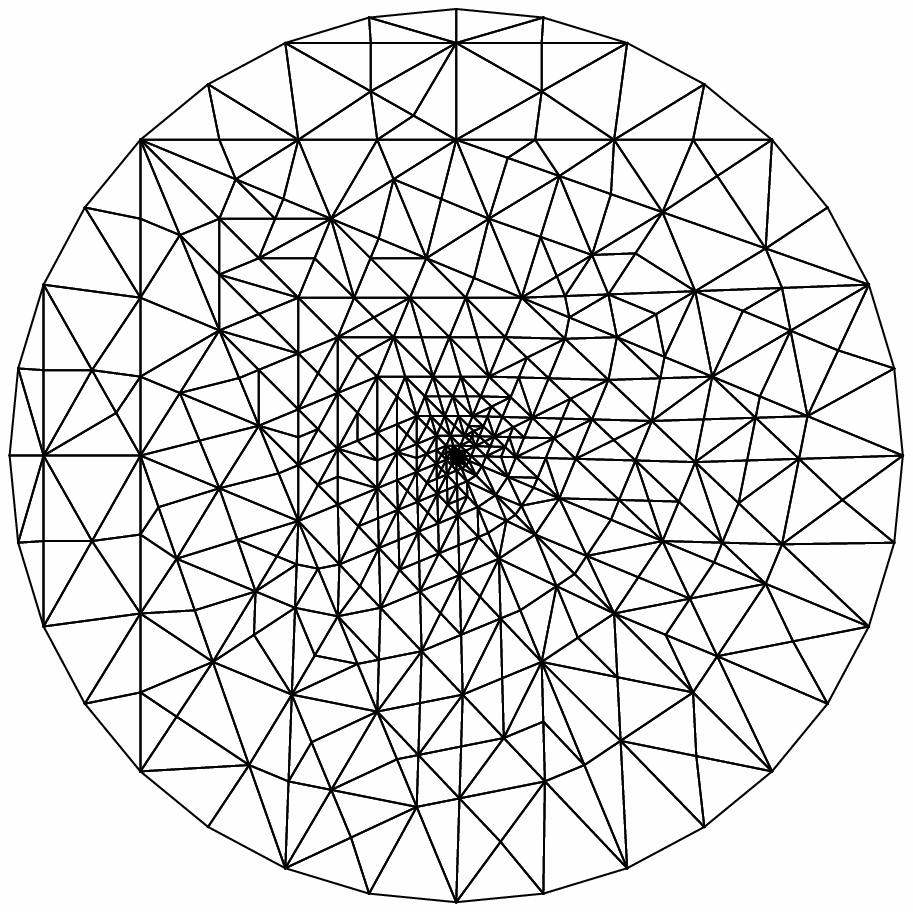}\\\tiny{(d)}
\end{minipage}
%\begin{minipage}[c]{0.3\textwidth}\centering
%\psfrag{Yh}{$\bar{y}_\T$}
%\includegraphics[trim={0 0 0 0},clip,width=4cm,height=4cm,scale=0.5]{Figures/Ex_1-30-iter/ejemplo1-yh.eps}\\\tiny{(d)}
%\end{minipage}
%\begin{minipage}[c]{0.3\textwidth}\centering
%\psfrag{Ph}{$\bar{p}_\T$}
%\includegraphics[trim={0 0 0 0},clip,width=4cm,height=4cm,scale=0.5]{Figures/Ex_1-30-iter/ejemplo1-p_h.eps}\\\tiny{(e)}
%\end{minipage}
\begin{minipage}[c]{0.3\textwidth}\centering
\psfrag{Uh}{$\bar{u}_\T$}
\includegraphics[trim={0 0 0 0},clip,width=4cm,height=4cm,scale=0.5]{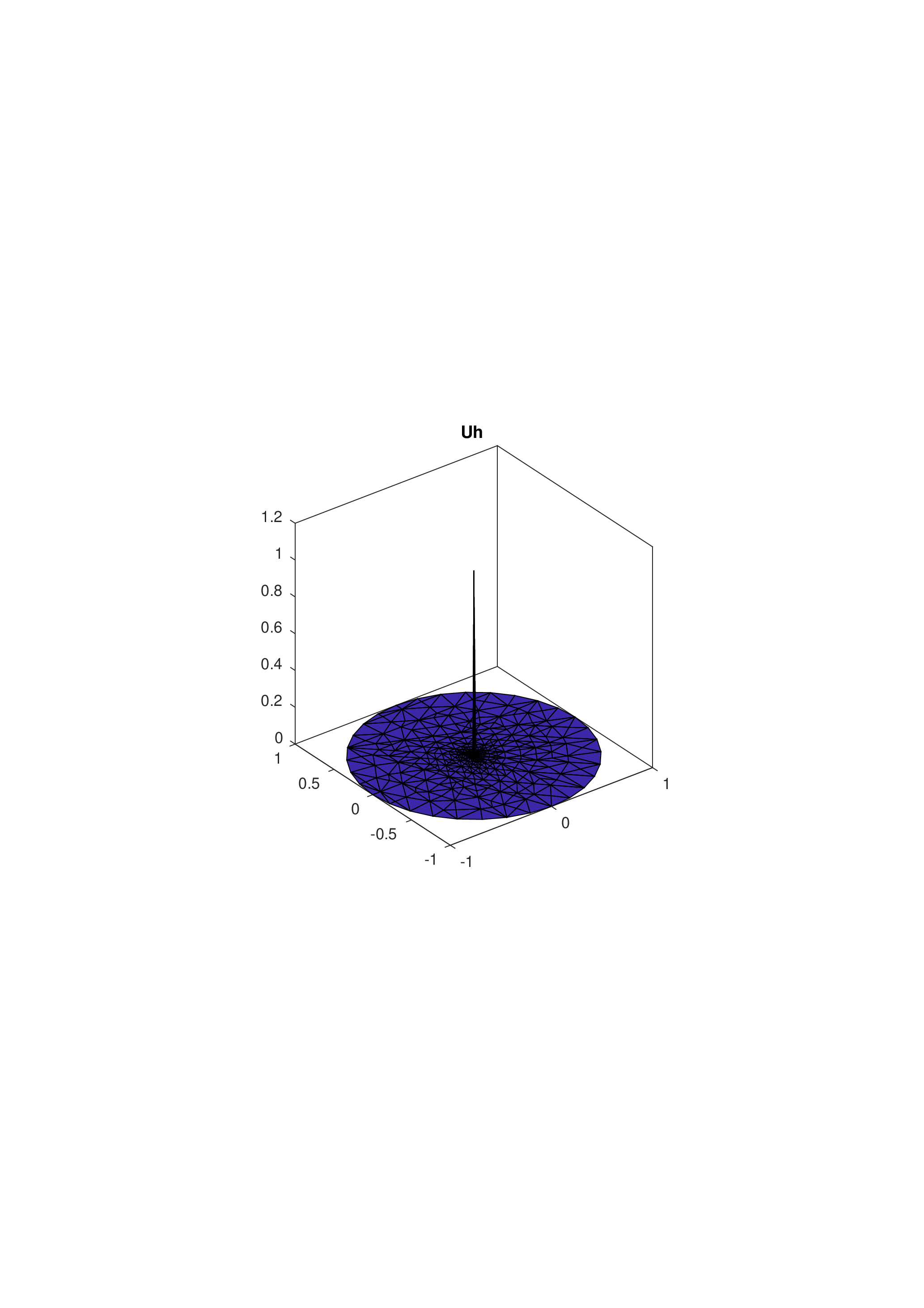}\\\tiny{(e)}
\end{minipage}
\caption{An example with known exact solution; see section~\ref{sub:example1}. (a) Experimental rates of convergence for $\E_{\rm{ocp}}(\bar{y}_\T,\bar{p}_\T,\bar{u}_\T;\T)$, and each of his contributions. (b) Experimental rates of convergence for the errors $e_y$ and $e_p$. (c) Experimental rates of convergence for $e_y$ on uniform and adaptive refinement.
% (d) \FF{Effectivity index for adaptive refinement.} 
(d) The $30th$ adaptively refined mesh. (e) The approximate discrete solution $\bar{u}_\T$.}
\label{ex_1}
\end{figure}

For this problem we set $\Omega=B_0(1) \subset \mathbb{R}^2$, that is, the unit two dimensional ball, and $\alpha=10^{-2}$. Following \citep[Section 8.1]{PV2013}, the exact optimal state, adjoint state, and exact optimal control are given by
\begin{equation*}
\bar{y}=-\frac{1}{2\pi}\ln{(|x|)},\qquad \bar{p}=-0.02|x|^3+0.03|x|^2-0.01, \qquad \bar u = \delta_0,
\end{equation*}
where for a vector $\xi \in \mathbb{R}^2$ we denote by $|\xi|$ its Euclidean norm. Note that the optimal state $\bar{y}$ is simply a Green's function. On the basis of the solution, we compute the desired state via
\[
  y_d = \Delta \bar p + \bar y = -\alpha \frac{6(3|x|^2-2|x|)}{|x|} - \frac1{2\pi}\ln(|x|);
\]
see \citep[Section 8.1]{PV2013} for details. The results for this experiment are presented in Figure~\ref{ex_1}, where we show the experimental rates of convergence for the error estimator $\E_{\rm{ocp}}$, as well as for each one of its contributions. Since, in this case, we have access to the exact solution, we also compute the decay rates of the errors $\|e_y\|_{L^2(\Omega)}$ and $\|e_p\|_{L^\infty(\Omega)}$. It can be observed that the AFEM described Algorithm \ref{Algorithm} outperforms the FEM of \cite{CCK2012} since it delivers optimal experimental rates of convergence when the latter cannot.

\subsection{An example with no analytical solution}
\label{sub:example_2}

\begin{figure}[ht]
\begin{minipage}{0.32\textwidth}\centering
\psfrag{alpha-0.1-t}{$\E_{\rm{ocp}}$}
\psfrag{alpha-0.1-y}{$\E_y$}
\psfrag{alpha-0.1-p}{$\E_p$}
\psfrag{Ndofs}{$\textrm{Ndof}$}
\psfrag{Ndofs-pot}{$\footnotesize{\textrm{Ndof}^{-1}}$}
\psfrag{estimators-ex1}{\hspace{-1.6cm}\small{Estimator contributions for $\alpha=10^{-1}$}}
\includegraphics[trim={0 0 0 0},clip,width=4cm,height=4cm,scale=0.6]{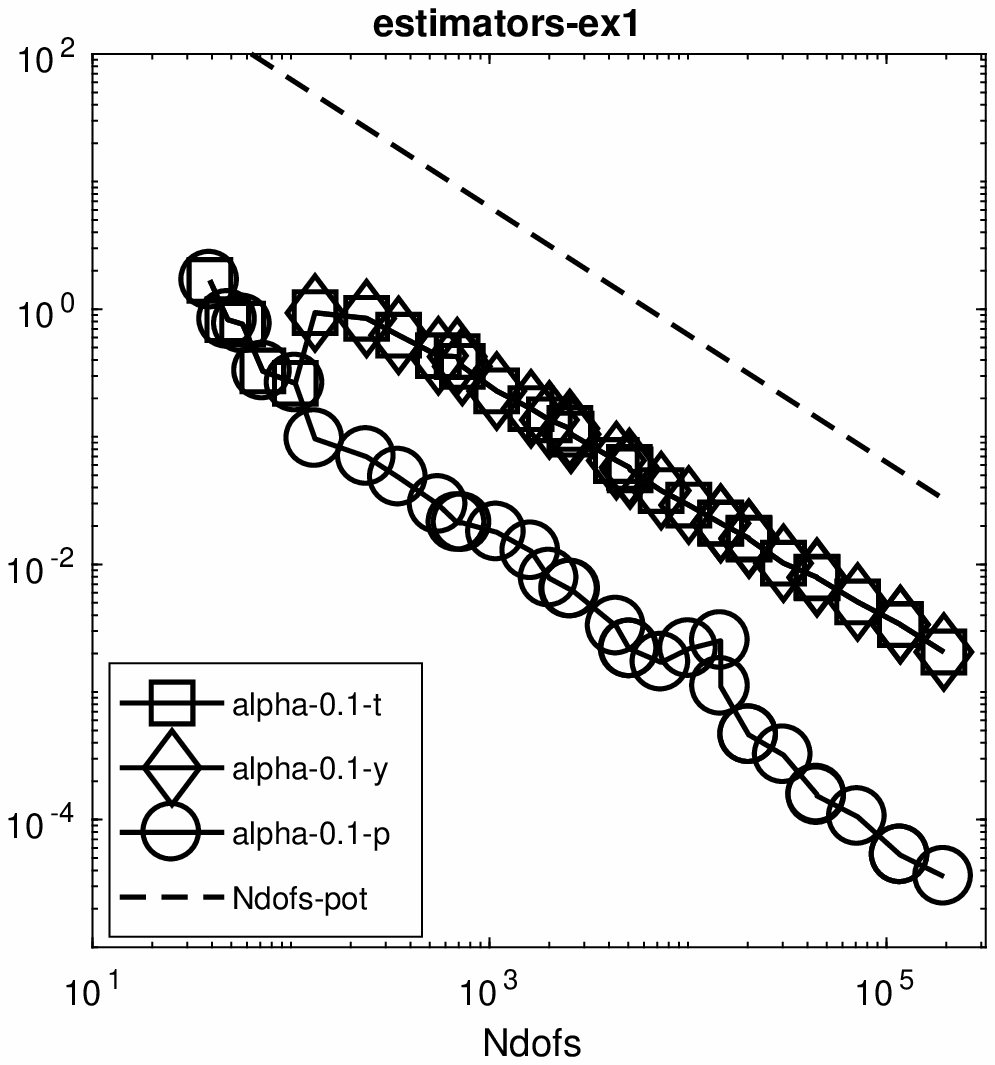}\\\tiny{(a)}
\end{minipage}
\begin{minipage}{0.32\textwidth}\centering
\psfrag{alpha-0.01-t}{$\E_{\rm{ocp}}$}
\psfrag{alpha-0.01-y}{$\E_y$}
\psfrag{alpha-0.01-p}{$\E_p$}
\psfrag{Ndofs}{$\textrm{Ndof}$}
\psfrag{Ndofs-pot}{$\footnotesize{\textrm{Ndof}^{-1}}$}
\psfrag{estimators-ex1}{\hspace{-1.6cm}\small{Estimator contributions for $\alpha=10^{-2}$}}
\includegraphics[trim={0 0 0 0},clip,width=4cm,height=4cm,scale=0.6]{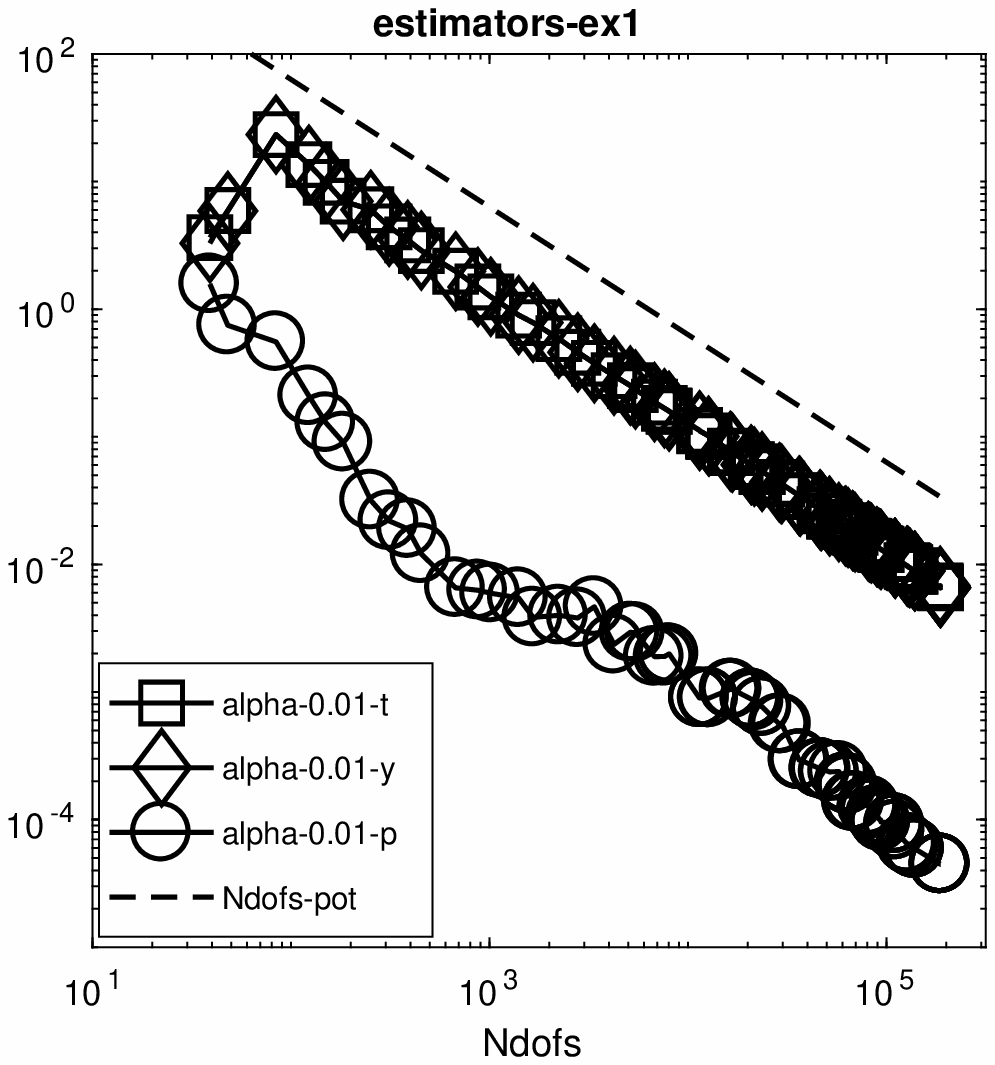}\\\tiny{(b)}
\end{minipage}
\begin{minipage}{0.32\textwidth}\centering
\psfrag{alpha-0.001-t}{$\E_{\rm{ocp}}$}
\psfrag{alpha-0.001-y}{$\E_y$}
\psfrag{alpha-0.001-p}{$\E_p$}
\psfrag{Ndofs}{$\textrm{Ndof}$}
\psfrag{Ndofs-pot}{$\footnotesize{\textrm{Ndof}^{-1}}$}
\psfrag{estimators-ex1}{\hspace{-1.6cm}\small{Estimator contributions for $\alpha=10^{-3}$}}
\includegraphics[trim={0 0 0 0},clip,width=4cm,height=4cm,scale=0.6]{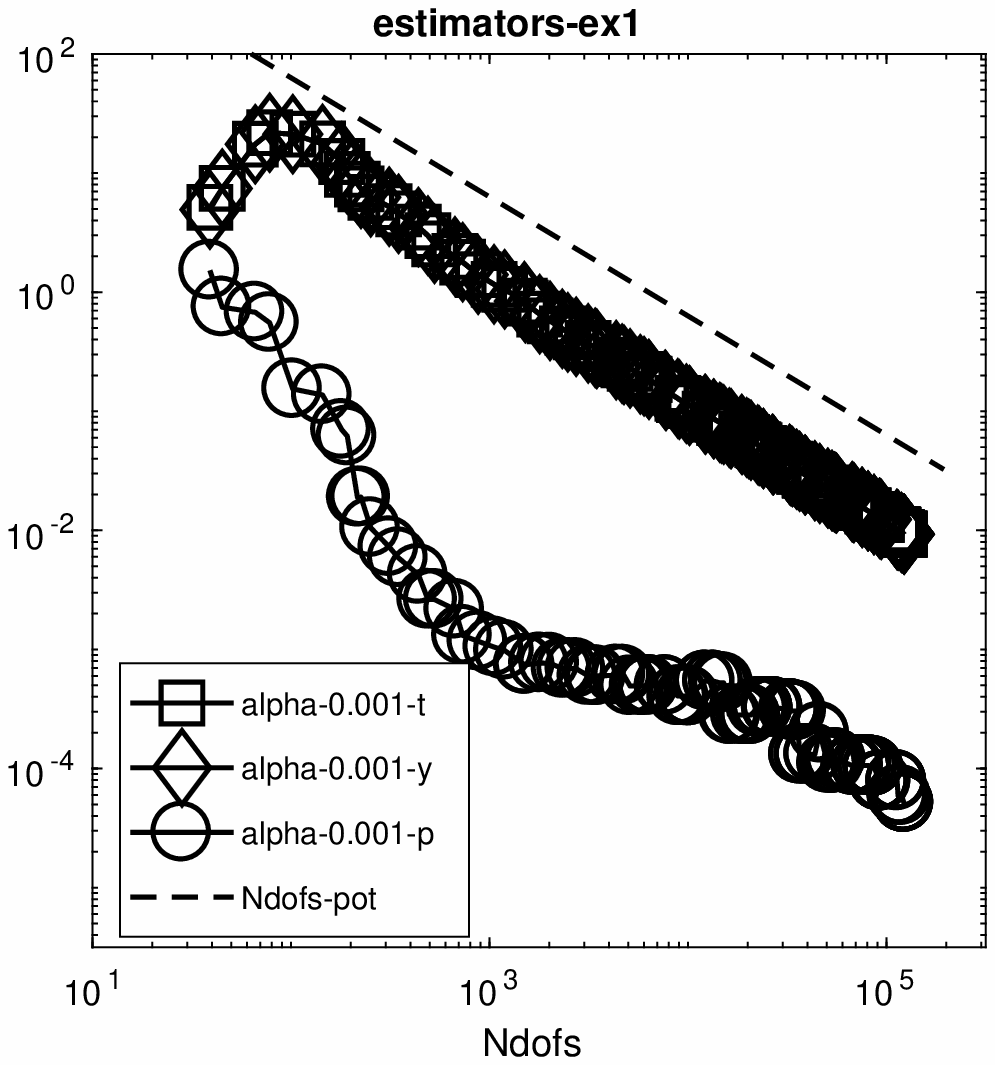}\\\tiny{(c)}
\end{minipage}
\\
\begin{minipage}{0.32\textwidth}\centering
\psfrag{alpha-0.0001-t}{$\E_{\rm{ocp}}$}
\psfrag{alpha-0.0001-y}{$\E_y$}
\psfrag{alpha-0.0001-p}{$\E_p$}
\psfrag{Ndofs}{$\textrm{Ndof}$}
\psfrag{Ndofs-pot}{$\footnotesize{\textrm{Ndof}^{-1}}$}
\psfrag{estimators-ex1}{\hspace{-1.6cm}\small{Estimator contributions for $\alpha=10^{-4}$}}
\includegraphics[trim={0 0 0 0},clip,width=4cm,height=4cm,scale=0.6]{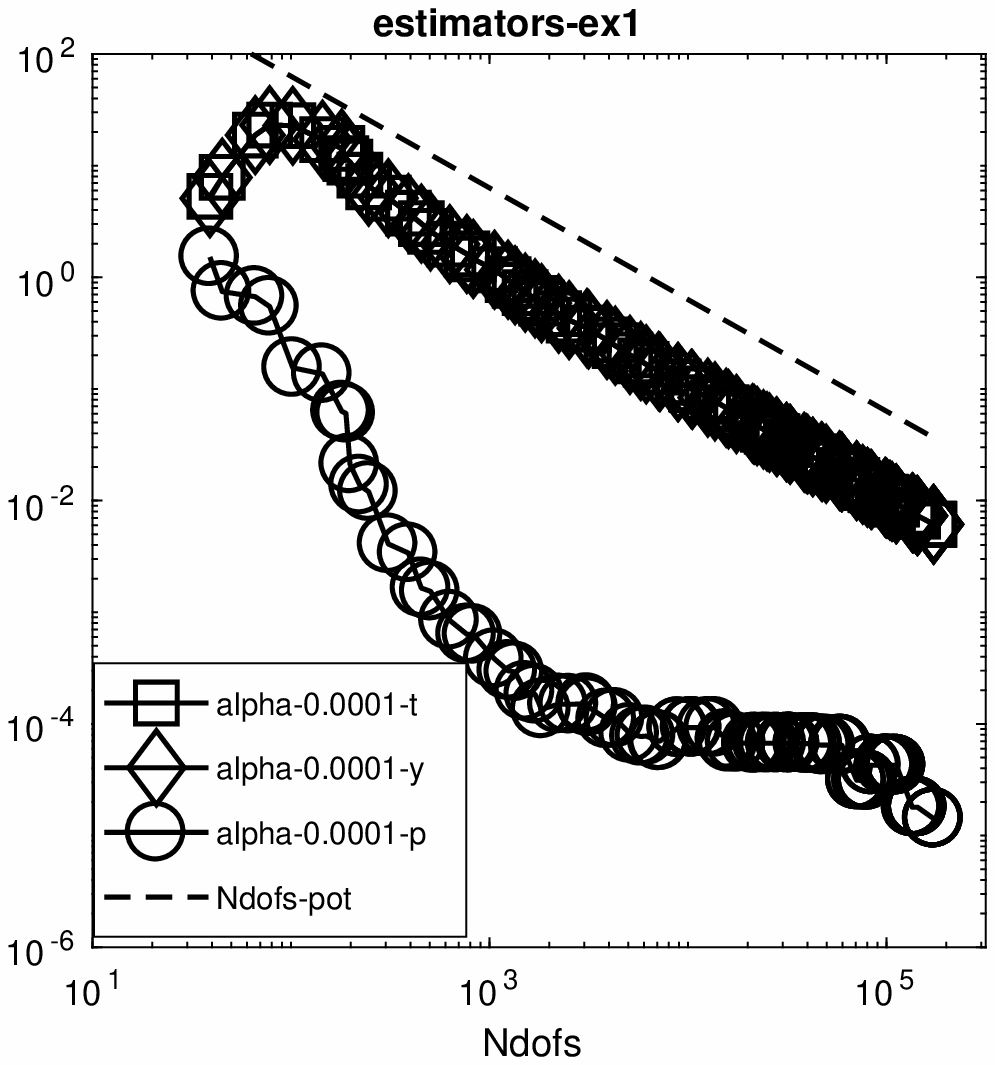}\\\tiny{(d)}
\end{minipage}
\begin{minipage}{0.32\textwidth}\centering
\psfrag{alpha-0.00001-t}{$\E_{\rm{ocp}}$}
\psfrag{alpha-0.00001-y}{$\E_y$}
\psfrag{alpha-0.00001-p}{$\E_p$}
\psfrag{Ndofs}{$\textrm{Ndof}$}
\psfrag{Ndofs-pot}{$\footnotesize{\textrm{Ndof}^{-1}}$}
\psfrag{estimators-ex1}{\hspace{-1.6cm}\small{Estimator contributions for $\alpha=10^{-5}$}}
\includegraphics[trim={0 0 0 0},clip,width=4cm,height=4cm,scale=0.6]{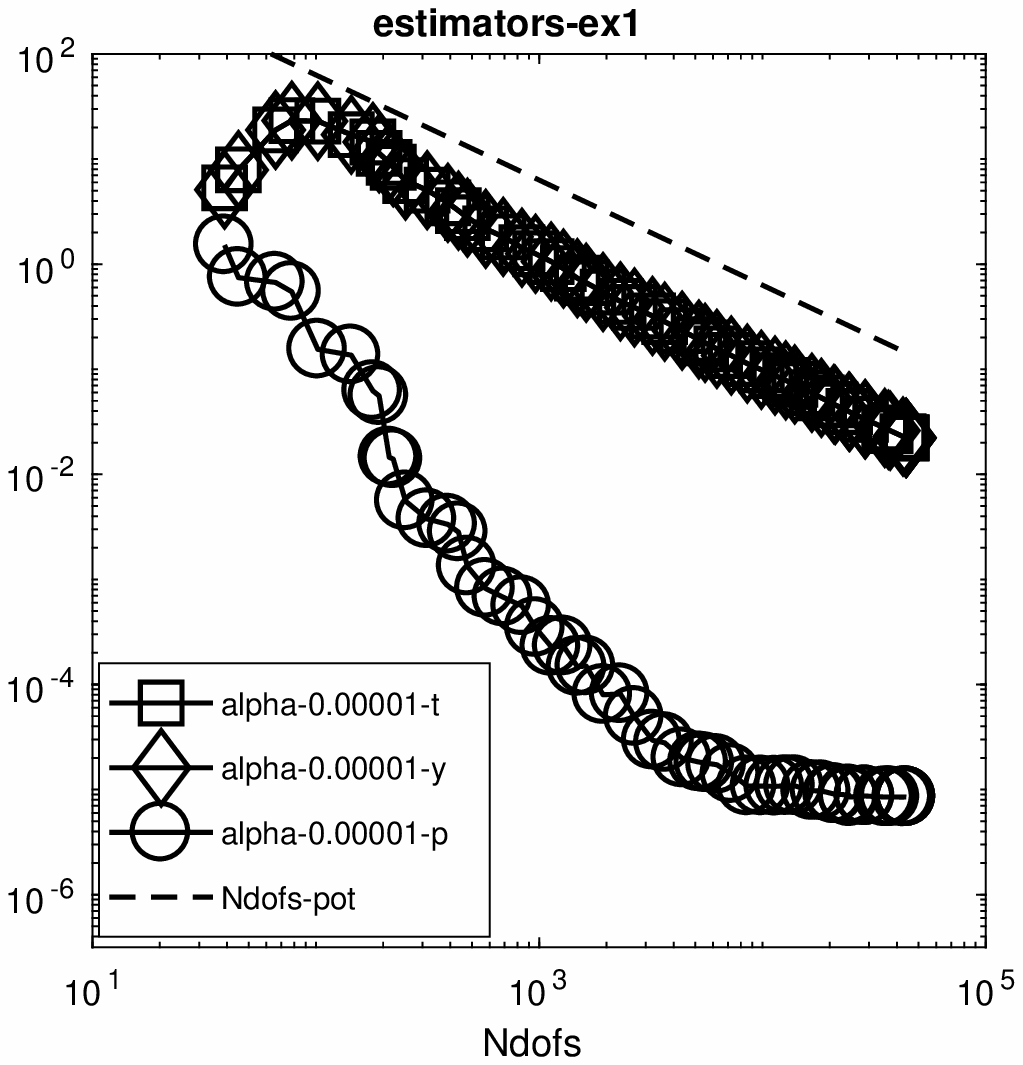}\\\tiny{(e)}
\end{minipage}
\begin{minipage}{0.32\textwidth}\centering
\psfrag{alpha-0.000001-t}{$\E_{\rm{ocp}}$}
\psfrag{alpha-0.000001-y}{$\E_y$}
\psfrag{alpha-0.000001-p}{$\E_p$}
\psfrag{Ndofs}{$\textrm{Ndof}$}
\psfrag{Ndofs-pot}{$\footnotesize{\textrm{Ndof}^{-1}}$}
\psfrag{estimators-ex1}{\hspace{-1.6cm}\small{Estimator contributions for $\alpha=10^{-6}$}}
\includegraphics[trim={0 0 0 0},clip,width=4cm,height=4cm,scale=0.6]{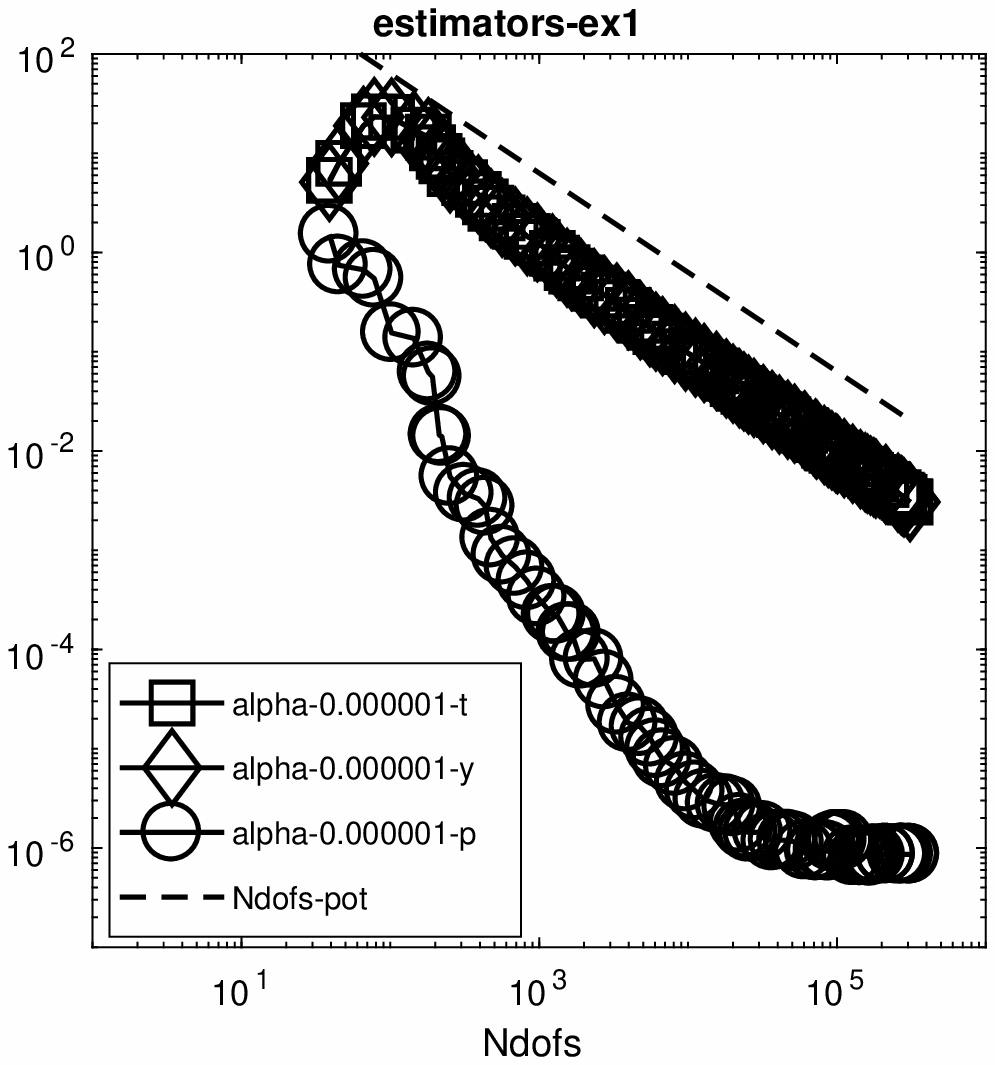}\\\tiny{(f)}
\end{minipage}
\\
%\begin{minipage}[c]{0.3\textwidth}\centering
%\psfrag{Yh}{$\bar{y}_\T$}
%\includegraphics[trim={0 0 0 0},clip,width=4cm,height=4cm,scale=0.5]{Figures/Ex_2-25-iter/ejemplo2-yh.eps}\\\tiny{(c)}
%\end{minipage}
%\begin{minipage}[c]{0.3\textwidth}\centering
%\psfrag{Ph}{$\bar{p}_\T$}
%\includegraphics[trim={0 0 0 0},clip,width=4cm,height=4cm,scale=0.5]{Figures/Ex_2-25-iter/ejemplo2-p_h.eps}\\\tiny{(d)}
%\end{minipage}
%\begin{minipage}{0.32\textwidth}\centering
%\psfrag{alpha-0.1}{$\alpha=10^{-1}$}
%\psfrag{alpha-0.01}{$\alpha=10^{-2}$}
%\psfrag{alpha-0.001}{$\alpha=10^{-3}$}
%\psfrag{Ndofs}{$\textrm{Ndof}$}
%\psfrag{Ndofs-pot}{$\footnotesize{\textrm{Ndof}^{-1}}$}
%\psfrag{estimators-ex1}{}
%\includegraphics[trim={0 0 0 0},clip,width=4cm,height=4cm,scale=0.6]{Figures/Ex_2-25-iter/alphas.eps}\\\tiny{(g)}
%\end{minipage}
\begin{minipage}[c]{0.3\textwidth}\centering
\psfrag{Uh}{$\bar{u}_\T$}
\includegraphics[trim={0 0 0 0},clip,width=3.6cm,height=3.6cm,scale=0.7]{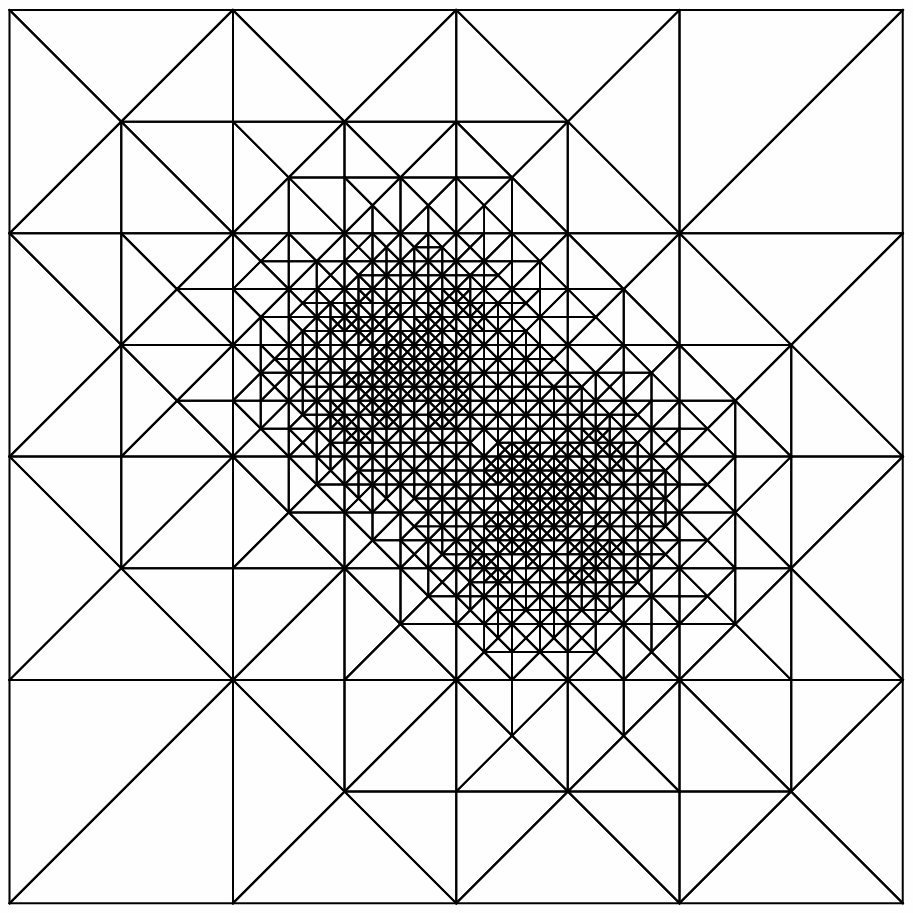}\\\tiny{(g)}
\end{minipage}
\begin{minipage}[c]{0.3\textwidth}\centering
\psfrag{Uh}{$\bar{u}_\T$}
\includegraphics[trim={0 0 0 0},clip,width=4cm,height=4cm,scale=0.5]{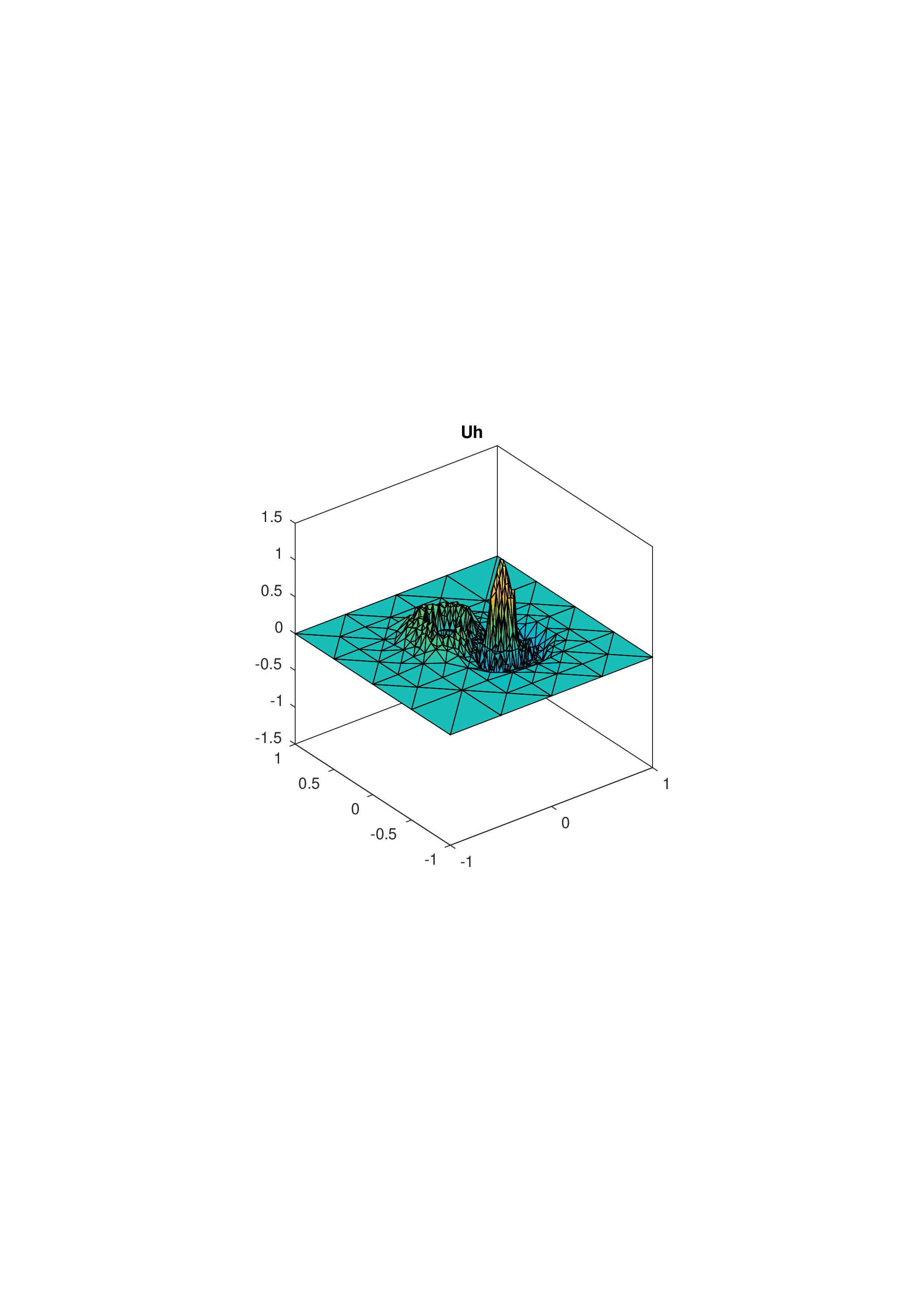}\\\tiny{(h)}
\end{minipage}
\caption{Results for the example of section~\ref{sub:example_2}. (a)--(f) Experimental rates of convergence for $\E_{\rm{ocp}}(\bar{y}_\T,\bar{p}_\T,\bar{u}_\T;\T)$ and its contributions for $\alpha \in \{10^{-1},10^{-2},10^{-3},10^{-4},10^{-5},10^{-6} \}$, respectively. (g) The $25th$ adaptively refined mesh for $\alpha = 10^{-4}$. (h) The discrete control $\bar{u}_\T$ on the $25th$ adaptively refined mesh for $\alpha = 10^{-4}$.}
%(c)---(e) The approximate discrete solutions $\bar{y}_\T,\bar{p}_\T$ and $\bar{u}_\T$, respectively.}
\label{ex_2}
\end{figure}

In this case we set $\Omega=(-1,1)^2$ and for $(x_1,x_2) \in \Omega$:
\begin{multline}
  y_d=10\big(
            \exp{(-50\{(x_1-0.2)^2+(x_2+0.1)^2)\}} 
            \\
            - 
            \exp{(-50\{(x_1+0.1)^2+(x_2-0.2)^2)\}}
    \big).
\end{multline}
The purpose of this example is to investigate the effect of varying the sparsity parameter $\alpha$. We consider
\[
\alpha \in \{10^{-1}, 10^{-2},10^{-3},10^{-4},10^{-5},10^{-6}\}.
\]
The results of this experiment are presented in Figure~\ref{ex_2}. We observe that optimal experimental rates of convergence are obtained for the error estimator $\E_{\rm{ocp}}$ and its individual contributions. For $\alpha=10^{-4}$, we display the adaptive mesh obtained after 25 iterations of the designed AFEM and the obtained discrete optimal control variable.

\subsection{An example on a nonconvex domain}
\label{sub:example_3}
We let $\Omega=(-1,1)^2\setminus [0,1) \times (-1,0]$, i.e., an $L$ shaped domain and $\alpha=5\cdot 10^{-3}$. The desired state $y_d$ is given by 
\[
y_d=-\log\big(\sqrt{(x_1-0.2)^2+(x_2+0.2)^2}\big), \quad (x_1,x_2) \in \Omega
\]

\begin{figure}[ht]
\begin{minipage}{0.32\textwidth}\centering
\psfrag{estimador-t}{$\E_{\rm{ocp}}$}
\psfrag{estimador y}{$\E_y$}
\psfrag{estimador p}{$\E_p$}
\psfrag{Ndofs}{$\textrm{Ndof}$}
\psfrag{Ndofs-pot}{$\footnotesize{\textrm{Ndof}^{-1}}$}
\psfrag{estimators-ex1}{\hspace{-1.2cm}\small{Estimator and its contributios}}
\includegraphics[trim={0 0 0 0},clip,width=4cm,height=4cm,scale=0.6]{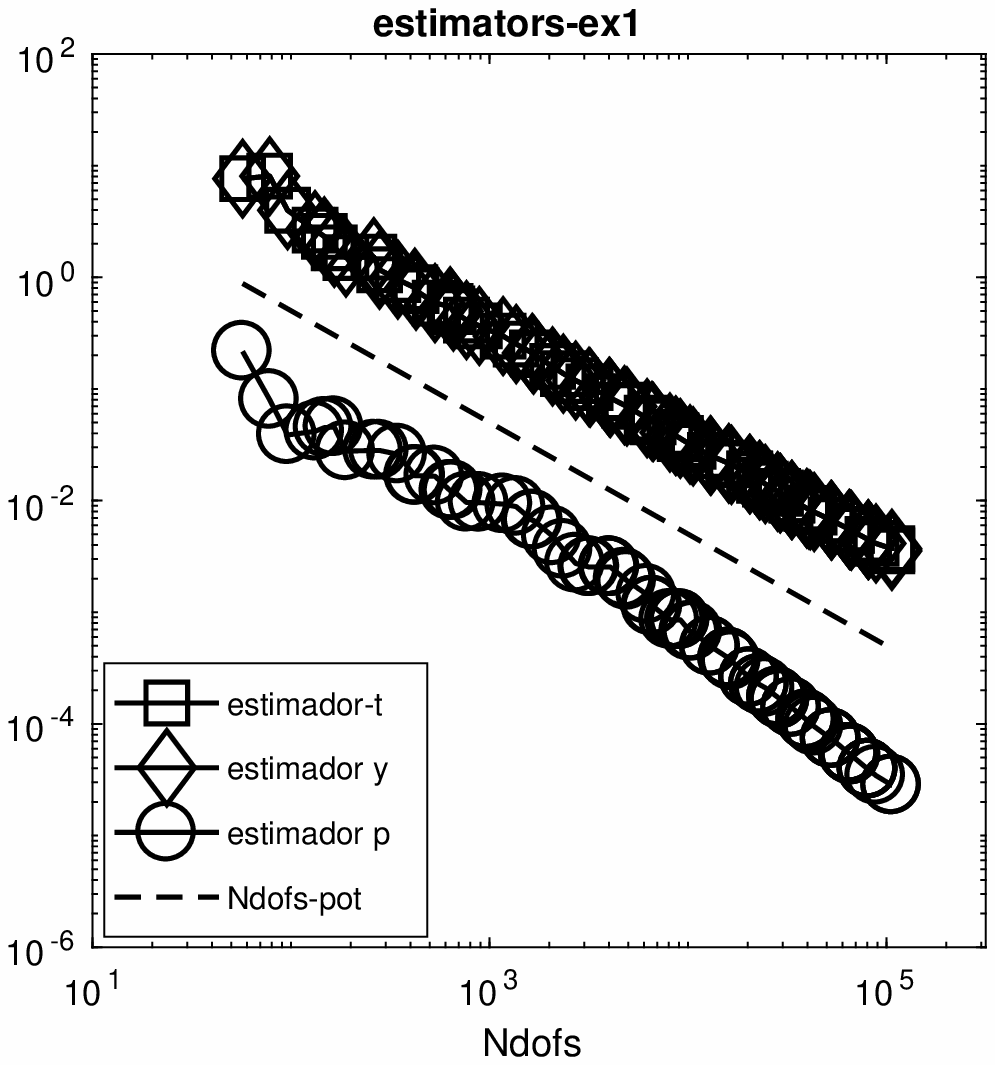}\\\tiny{(a)}
\end{minipage}
\begin{minipage}[c]{0.3\textwidth}\centering
\psfrag{Uh}{$\bar{u}_\T$}
\includegraphics[trim={0 0 0 0},clip,width=3.6cm,height=3.6cm,scale=0.7]{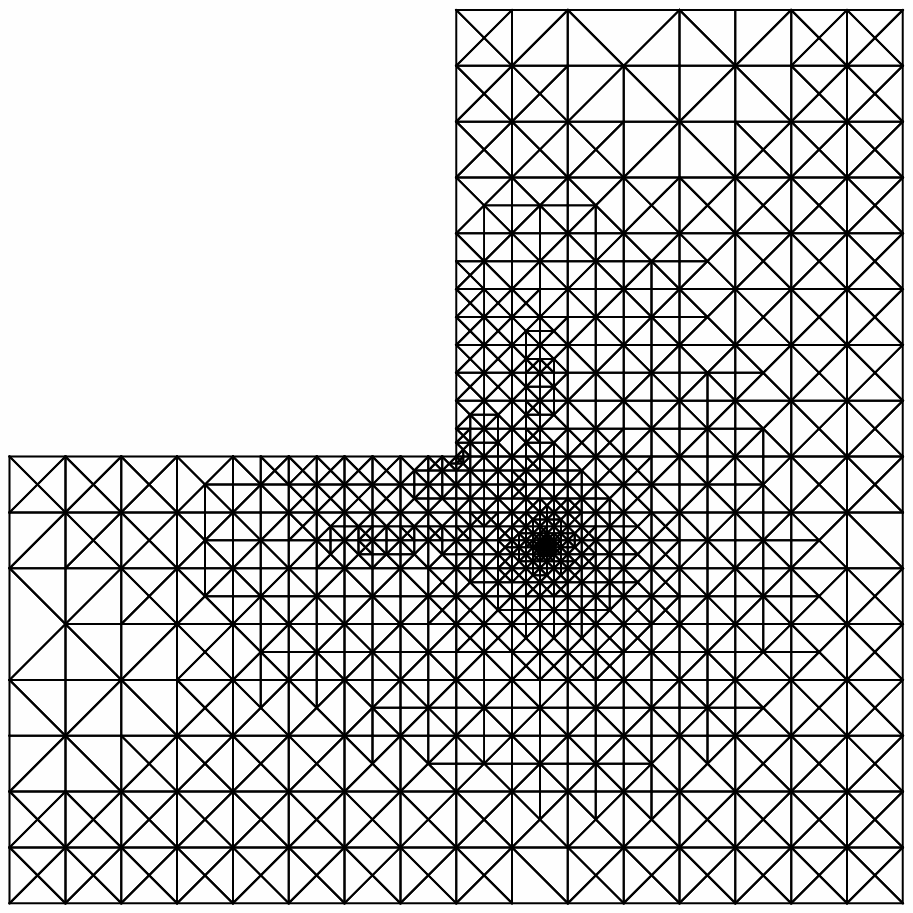}\\\tiny{(b)}
\end{minipage}
%%\\
%\begin{minipage}[c]{0.3\textwidth}\centering
%\psfrag{Yh}{$\bar{y}_\T$}
%\includegraphics[trim={0 0 0 0},clip,width=4cm,height=4cm,scale=0.5]{Figures/Ex_3-25-iter/ejemplo1-yh.eps}\\\tiny{(c)}
%\end{minipage}
%\begin{minipage}[c]{0.3\textwidth}\centering
%\psfrag{Ph}{$\bar{p}_\T$}
%\includegraphics[trim={0 0 0 0},clip,width=4cm,height=4cm,scale=0.5]{Figures/Ex_3-25-iter/ejemplo1-p_h.eps}\\\tiny{(d)}
%\end{minipage}
\begin{minipage}[c]{0.3\textwidth}\centering
\psfrag{Uh}{$\bar{u}_\T$}
\includegraphics[trim={0 0 0 0},clip,width=4cm,height=4cm,scale=0.5]{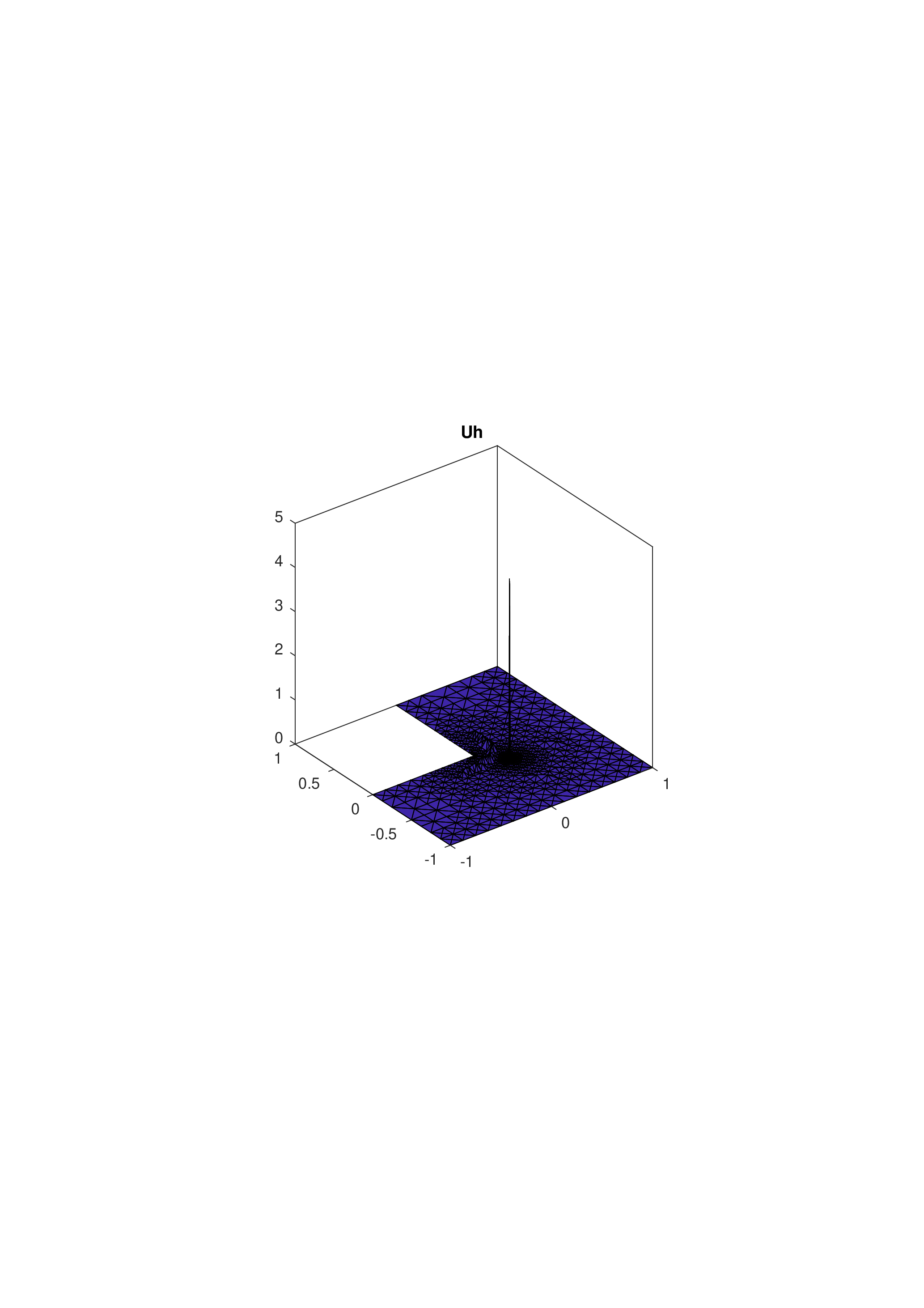}\\\tiny{(c)}
\end{minipage}
\caption{Results for the example of section~\ref{sub:example_3}. (a) Experimental rates of convergence for $\E_{\rm{ocp}}(\bar{y}_\T,\bar{p}_\T,\bar{u}_\T;\T)$, and each of his contributions. (b) The $25th$ adaptively refined mesh. (c) The discrete control $\bar{u}_\T$ on the $25th$ adaptively refined mesh.} %(c)---(e) The approximate discrete solutions $\bar{y}_\T,\bar{p}_\T$ and $\bar{u}_\T$, respectively.}
\label{ex_3}
\end{figure}

The results are shown in Figure \ref{ex_3} where we observe optimal experimental rates of convergence for the proposed a posteriori error estimator $\E_{\rm{ocp}}$ and each one of its contributions. We also observe that the refinement is being concentrated about the singularity exhibited by the optimal control and to a lesser extent the re--entrant corner.
To appreciate the nature of the aforementioned singularity, we also display the computed optimal control variable.

\subsection{Conclusions}
\label{sub:conclusion}

From the presented numerical examples several general conclusions can be drawn: 
\begin{enumerate}[$\bullet$]

\item The error estimator $\E_{\rm{ocp}}$, as well as each of his contributions, exhibit optimal experimental rates of convergence for the experiments that we perform.

\item Most of the refinement occurs near the singularity points, which attests to the efficiency of the devised estimators.

\item The contribution $\E_y(\bar{p}_\T,\bar{y}_\T;\T)$ to the error estimator is the dominating one. We believe that this shows the very singular nature of the problem that defines the state variable. 

\item The third example shows that, even in the presence of a nonconvex domain, the error estimator $\E_{\rm{ocp}}$ exhibits optimal experimental rates of convergence with respect to approximation.
\end{enumerate}

\section*{Acknowledgement}
The authors would like to thank Cesare Molinari (UTFSM) for insightful discussions.

\bibliographystyle{siam}
\footnotesize
\bibliography{biblio}

\end{document}